\newcommand{\R}{{\mathbb R}}
\newcommand{\be}[1]{\begin{equation*}\label{#1}}
\newcommand{\ee}{\end{equation*}}
\renewcommand{\(}{\left(}
\renewcommand{\)}{\right)}
\newcommand{\ird}[1]{\int_{\R^N}{#1}\,dx}
\newcommand{\ire}[1]{\int_{\R^N}{#1}\,dy}
\newtheorem{thm}{Theorem}[section]
\newtheorem{cor}[thm]{Corollary}
\newtheorem{lem}[thm]{Lemma}
\newtheorem{rmk}[thm]{Remark}
\newtheorem{prop}[thm]{Proposition}
{ \theoremstyle{remark} }
\begin{document}

\title[Large time asymptotic behaviours of two types of fast diffusion equations]{Large time asymptotic behaviors of two types of fast diffusion equations}

\author{Chuqi Cao, Xingyu Li }
\thanks{C. Cao: Yau Mathematical Science Center and  Beijing Institute of Mathematical Sciences and Applications, Tsinghua University, Beijing, China. \\
Email address: chuqicao@gmail.com \\[1pt]}
\thanks{X. Li: CEREMADE (CNRS UMR n.7534), PSL research university, Universit\'e Paris-Dauphine, Place de Lattre de Tassigny, 75775 Paris 16, France.\\
Email address: li@ceremade.dauphine.fr}%
\maketitle\thispagestyle{empty}

\begin{abstract} We consider two types of nonlinear fast diffusion equations in $\mathbb{R}^N$: \\

(1) External drift type equation with general external potential.  It is a natural extension of the harmonic potential case, which has been studied in many papers. In this paper we can prove the large time asymptotic behavior to the stationary state by using entropy methods. \\

(2) Mean-field type equation with the convolution term. The stationary solution is the minimizer of the free energy functional, which has direct relation with reverse Hardy-Littlewood-Sobolev inequalities. In this paper, we prove that for some special cases, it also exists large time asymptotic behavior to the stationary state.\\
\par\textbf{Keywords: } nonlinear fast diffusion; mean field equations; free energy; Fisher information: large time asymptotic; Hardy-Poincar\'e inequality; reverse Hardy-Littlewood-Sobolev inequality.\\
\par\textbf{AMS subject classifications:} 35K55; 35B40; 35P15.
\end{abstract}

\section{Introduction}

Asymptotic rates of convergence for the solutions of diffusion equations have been studied in many papers, such as \emph{linear diffusion equation}(Keller-Segel type, see \cite{hoffmann2017keller}), \emph{porous medium type equation}(see \cite{carrillo2000asymptotic}).
In this paper, we mainly study the asymptotic behavior of two different types of non-linear \emph{fast diffusion equations} in $\mathbb R^N$, and we denote 
\[
V_{\lambda}(x):=\frac{1}{\lambda}|x|^{\lambda} (\lambda>0), \frac{N}{N+\lambda}<q<1\]
and
\[
q_{\#}:=\left\{
\begin{aligned}
  & \frac{N-2-\lambda}{N-2} , &\quad  N\ge 3 \\
  & 0, &\quad N=1,2 
\end{aligned}
\right.:
\]
\\

\textbf{1. Fast diffusion equation with external drift}
\begin{equation}
\label{fd1}
n_t=\Delta(n^q)+\nabla(n\nabla V_{\lambda}),\quad n(0,.)=n_0>0.
\end{equation}
Obviously, this equation is mass conserved. Set $\ird{n(x)}=m>0$.  Notice that the stationary solution $N_h$ of \eqref{fd1} is of the form
\begin{equation*}
N_h=\left(\frac{1-q}{q}(h+V_{\lambda})\right)^{\frac{1}{q-1}}.
\end{equation*}
here $h(m)>0$ is uniquely determined by the equation
\begin{equation}
\label{mass}
\ird{N_h}=m.
\end{equation}
From now on, we always suppose that $n$ satisfies the following assumption if not specially mentioned.\\
{\label{as1}
\textbf{(H1)} There exist constants $0<h_1<h_2$, such that
\[
N_{h_2}(x)\le n_0(x)\le N_{h_1}(x), \quad \forall x \in \R^d
\]}
According to the maximum principle, $N_{h_2}\le n(t,.)\le N_{h_1}$ for any $t>0$, see page 8-10 from \cite{blanchet2009asymptotics} for more details. We will also denote $h_*$ which is uniquely determined by
\[
\int_{\R^N} N_{h_*}  dx= \int_{\R^N} n_0 dx
\]

\textbf{2. Mean-field type fast diffusion equation} \\

We consider the equation 
\begin{equation}
\label{fd2}
\rho_t=\Delta(\rho^q)+\nabla(\rho\nabla V_\lambda*\rho), \quad\rho(0,.)=\rho_0>0.
\end{equation}
For this equation, we always suppose that $\frac{N}{N+\lambda}<q<1$. Notice that the stationary solution $\rho_{\infty}$ satisfies
\[
\frac{q}{1-q}\rho_{\infty}^{q-1}=V_\lambda*\rho_{\infty}+C
\]
for some constant $C>0$. In this paper, we focus on the normalized mass case 
\[
\ird{\rho(t, x)}=\ird{\rho_{\infty}}=1
\]
we mention here equation \eqref{fd2} with other mass can be treated similarly. Notice that for $i=1,...N$,
\[
\frac{d}{dt}\ird{x_i\rho}=\ird{\ire{\rho(x)|x-y|^{\lambda-2}(x_i-y_i)\rho(y)}}=0
\]
from the result of \cite{dolbeault2018reverse}, we can assume that $\rho_{\infty}$ is radially symmetric and non-increasing, so 
\[
\ird{x_i\rho_0}=\ird{x_i\rho_\infty}=0
\]
moreover, we also assume that\\
{\label{as2}
\textbf{(H2)} There exist two stationary solutions (possibly with different mass) $\rho_1, \rho_2$ such that
\[
\rho_{2}(x)\le \rho_0(x)\le \rho_{1}(x), \quad \forall x \in \R^d.
\]}
When $\lambda=2$, we know from \cite{dolbeault2018reverse} that after modulo translations,
 $\rho_{\infty}$ has the form
\begin{equation}
\label{case2}
\frac{q}{1-q}\rho_{\infty}^{q-1}=\frac{1}{2}|x|^2+C
\end{equation}
by the theory of Beta function, $C$ satisfies
\[
C^{\frac{1}{1-q}-\frac{N}{2}}=(2\pi)^\frac{N}{2}\left(\frac{1-q}{q}\right)^\frac{1}{q-1}\frac{\Gamma\left(\frac{1}{1-q}-\frac{N}{2}\right)}{\Gamma\left(\frac{1}{1-q}\right)}
\]
and we will show the large time asymptotic to the stationary solution  $\rho_{\infty}$. 

For $\lambda \neq 2$, the form of $\rho_{\infty}$ is much more complicated so it's hard to analyze it.  From the result of \cite{dolbeault2018reverse}, when $q \in (\frac {N } {N+\lambda}, 1 )$, the stationary solution $\rho_\infty$ satisfies
\[
\rho_\infty =  \rho_* +M_* \delta_0
\]
for some $M_* \in [0, 1)$ and $\rho_* \in L^1_+ \cap L^q(\R^N)$ is radically symmetric and supported on $\R^N$, and $\delta_0$ denotes the Dirac measure at the point 0. When $q(\frac {N } {N+\lambda}, 1 )$ near $\frac {N } {N+\lambda}$,  in a recent preprint \cite{carrillo2020fast} the authors proved that $M_*>0$ with $\lambda =4, N \ge 6$, but when $q>\frac{2N}{2N+\lambda}$, it is proved in \cite{dolbeault2018reverse} that the Dirac mass does not appear in $\rho_{\infty}$. In this paper, we mainly consider the case $\lambda>2$ and suppose that $\frac{2N}{2N+\lambda}<q<1$ without more explication. We will try to analyze the exact result about $\rho_{\infty}$ and also show the similar asymptotic behavior if $q$ is close enough to 1.

\subsection{Main tools and results.}
For equation \eqref{fd1}, we consider the \emph{free energy}
\begin{equation}\label{free1}
\mathcal F[n]:=-\frac{1}{1-q}\ird{n^q}+\ird{V_{\lambda} n}
\end{equation}
the \emph{relative entropy}
\[
\mathcal{F}[n|N_h]:= \mathcal F[n]- \mathcal F[N_h]
\]
and the \emph{relative Fisher information} with respect to $N_{h_*}$ defined as
\begin{equation}\label{fisher1}
\mathcal I[n]:=\ird{n\left|\nabla\left(\frac{q}{q-1}n^{q-1}+V_{\lambda}\right)\right|^2}=\frac{q^2}{(1-q)^2}\ird{n|\nabla(n^{q-1}-N_{h_*}^{q-1})|^2}
\end{equation}
we will prove in Section~\ref{Sec2:fastdiffu1} that under the constraints $n(x)>0,\ird{n(x)}=m$, $\mathcal F$ is bounded from below, and $N_{h_*}$ is the unique minimizer , which is $\mathcal F[n]-\mathcal F[N_{h_*}]\ge 0$. If $n$ solves
\eqref{fd1}, we obtain that 
\[
\frac{d}{dt}\mathcal F[n]=-\mathcal I[n]
\]
see Proposition 2 of \cite{blanchet2009asymptotics} for more details. \\
For equation~\eqref{fd2}, the \emph{free energy} becomes
\begin{equation}
\label{free5}
\mathbb{F}[\rho]=-\frac{1}{1-q}\ird{\rho^q}+\frac{1}{2\lambda}
 I_{\lambda}[\rho]
\end{equation}
the \emph{relative entropy}
\[
\mathbb{F}[\rho|\rho_\infty]:= \mathbb F[\rho]- \mathbb F[\rho_\infty]
\]
where
\[
 I_{\lambda}[\rho]=\ire{\ird{|x-y|^{\lambda}\rho(x)\rho(y)}}
\]

notice that because $\frac{N}{N+\lambda}<q<1$, the free energy of $\mathcal F[\rho_{\infty}]$ is finite. Moreover, we define the \emph{relative Fisher information}
\begin{equation}
\label{free6}
\mathbb{I}[\rho]:=\ird{\rho\left|\frac{q}{1-q}\nabla\rho^{q-1}-\nabla V_\lambda*\rho\right|^2}
\end{equation}
our analysis is based on the Theorem below.
\begin{thm}(\cite{dolbeault2018reverse}, Reverse Hardy-Littlewood-Sobolev inequality)
Let $N \ge 1 , \lambda>0, q \in(0, 1)$, define
\[
\alpha :=\frac {2N - q(2N+\lambda) } {N(1-q)} 
\]
then the inequality
\begin{equation} 
\label{reversehls1}
\ire {\ird { |x-y|^\lambda   \rho(x) \rho(y) }}\ge C_{N, \lambda, q}  \left(\ird{\rho(x)} \right)^\alpha \left(\ird{\rho(x)^q }\right)^{\frac {2-\alpha} {q}}
\end{equation}
holds for any nonnegative function $\rho \in L^1 \cap L^q(\R^N)$ and for some positive constant $C_{N, \lambda, q}$ if and only if $q>\frac {N} {N+\lambda}$, or equivalently $\alpha<1$.
Moreover, the optimal function of the inequality is the global minimizer of  $\mathbb F[\rho]$. The optimal function is radially symmetric, non-increasing and supported
on $\mathbb R^N$, and it is unique up to translation.
\end{thm}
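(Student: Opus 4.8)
The plan is to read the inequality as a scale-invariant statement and reduce it to a constrained minimization problem. Writing $\rho_{a,\mu}(x):=a\,\mu^N\rho(\mu x)$ one checks that $\int_{\R^N}\rho_{a,\mu}\,dx=a\int_{\R^N}\rho\,dx$, that $\int_{\R^N}\rho_{a,\mu}^q\,dx=a^q\mu^{N(q-1)}\int_{\R^N}\rho^q\,dx$, and that $I_\lambda[\rho_{a,\mu}]=a^2\mu^{-\lambda}I_\lambda[\rho]$. Demanding that the quotient
\[
\mathcal R[\rho]:=\frac{I_\lambda[\rho]}{\left(\int_{\R^N}\rho\,dx\right)^\alpha\left(\int_{\R^N}\rho^q\,dx\right)^{(2-\alpha)/q}}
\]
be invariant under both $a$ and $\mu$ forces exactly the exponent $\alpha=(2N-q(2N+\lambda))/(N(1-q))$, and a direct computation shows $\alpha<1\iff q>N/(N+\lambda)$. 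Thus $C_{N,\lambda,q}=\inf_\rho\mathcal R[\rho]$, and after normalizing $\int\rho\,dx=\int\rho^q\,dx=1$ by the two scalings, proving the inequality amounts to showing that $\inf\{I_\lambda[\rho]:\int_{\R^N}\rho\,dx=1,\ \int_{\R^N}\rho^q\,dx=1\}$ is strictly positive precisely when $\alpha<1$.

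For positivity and for the shape of the optimizer I would first symmetrize. Since $z\mapsto-|z|^\lambda$ is symmetric decreasing, the Riesz rearrangement inequality (applied to the nonnegative truncated kernels $(L^\lambda-|z|^\lambda)_+$ and passing to the limit $L\to\infty$) gives $I_\lambda[\rho^*]\le I_\lambda[\rho]$, while $\int\rho^*\,dx=\int\rho\,dx$ and $\int(\rho^*)^q\,dx=\int\rho^q\,dx$. Hence the minimization may be restricted to radial non-increasing densities. Existence of a minimizer I would obtain by the concentration-compactness principle applied to a symmetric minimizing sequence: vanishing is excluded because spreading the mass makes $I_\lambda$ grow without bound, and dichotomy is ruled out by a strict subadditivity inequality for the minimal energy as a function of the mass. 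The Euler--Lagrange equation then reads $\frac{q}{1-q}\rho_\infty^{q-1}=V_\lambda*\rho_\infty+C$, which is exactly the equation for critical points of $\mathbb F$ at fixed mass, so after undoing the normalizing scaling the optimizer of $\mathcal R$ is the global minimizer of $\mathbb F[\rho]$; radial monotonicity is inherited from the symmetrization, and uniqueness up to translation follows from the strict form of the rearrangement inequality together with the strict concavity of $\rho\mapsto\int\rho^q$. I expect this compactness step to be the main obstacle: for $q$ close to the threshold $N/(N+\lambda)$ the concave constraint $\int\rho^q\,dx=1$ allows mass to escape into a concentrating atom (a Dirac component $M_*\delta_0$), so one must track this possible loss of compactness carefully and verify that it does not occur in the regime under consideration.

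For the converse direction I would exhibit an explicit degenerating family when $\alpha\ge1$. Fix a smooth compactly supported radial profile $g$ centred at the origin and adjoin an atom of mass $s$, i.e.\ a bump $\sigma$ of mass $s$ concentrated in a ball $B_r$ around $0$; letting $r\to0$ gives $\int\sigma^q\,dx\to0$ and $I_\lambda[\sigma]\to0$, so that $\int(g+\sigma)\,dx=\int g\,dx+s$, $\int(g+\sigma)^q\,dx\to\int g^q\,dx$, and $I_\lambda[g+\sigma]\to I_\lambda[g]+2s\int_{\R^N}|x|^\lambda g\,dx$. Sending $s\to\infty$, the quotient $\mathcal R$ behaves like $s^{1-\alpha}$, which tends to $0$ whenever $\alpha>1$; hence no positive constant can exist and the inequality fails for $q<N/(N+\lambda)$. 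The borderline case $\alpha=1$ (that is $q=N/(N+\lambda)$) is genuinely critical, since every one-parameter dilation family leaves $\mathcal R$ invariant and the atom family above yields only a fixed positive value; I would treat it by a sharper two-scale construction in which the background $g$ spreads while the atom grows simultaneously, driving $\mathcal R$ to the infimum $0$ and thereby establishing failure at the endpoint as well.
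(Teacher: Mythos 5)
First, a point of context: the paper does not prove this theorem at all — it is quoted from \cite{dolbeault2018reverse} and used as a black box — so your proposal can only be compared with the argument in that reference. Your scaling computation (the exponent $\alpha$ is forced by invariance under $\rho\mapsto a\mu^N\rho(\mu\,\cdot)$, and $\alpha<1\Leftrightarrow q>\frac{N}{N+\lambda}$), the symmetrization via truncated Riesz kernels, and the atom construction for $\alpha>1$ are all correct. The genuine gap is in the sufficiency direction: you route the inequality through existence of a function-valued minimizer obtained by concentration--compactness, and you propose to ``verify that [atom formation] does not occur in the regime under consideration.'' That cannot be verified, because it is false inside the regime the theorem covers: as this very paper recalls (citing \cite{carrillo2020fast}), for $\lambda=4$, $N\ge6$ and $q>\frac{N}{N+\lambda}$ close to $\frac{N}{N+\lambda}$, the optimizer is $\rho_*+M_*\delta_0$ with $M_*>0$, so minimizing sequences of integrable functions genuinely concentrate and no minimizer exists in $L^1\cap L^q$ — and yet the inequality still holds there with a positive constant. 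Existence of minimizers is thus both unavailable in part of the range and unnecessary. In \cite{dolbeault2018reverse}, positivity of $C_{N,\lambda,q}$ is proved directly, with no compactness: after normalizing $\ird{\rho}=1$ and centering the barycenter at $0$, one bounds the interaction from below by a moment (for $\lambda\ge1$, Jensen gives $I_\lambda[\rho]\ge\ird{|x|^\lambda\rho}$), and then H\"older's inequality
\[
\ird{\rho^q}\le\left(\ird{\rho\,(1+|x|^\lambda)}\right)^{q}\left(\ird{(1+|x|^\lambda)^{-\frac{q}{1-q}}}\right)^{1-q}
\]
closes the estimate after optimizing over dilations; the last integral is finite exactly when $q>\frac{N}{N+\lambda}$, which is where the sharp threshold enters. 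Decoupling the inequality from the optimizer question is the structural point your plan misses.

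Two further gaps. At the endpoint $\alpha=1$ ($q=\frac{N}{N+\lambda}$), your ``background spreads while the atom grows'' family cannot work: at $\alpha=1$ the quotient $\mathcal R$ is invariant under precisely the dilation you apply to the background, so modulo scaling your family is just ``fixed $g$ plus an atom of mass $s$,'' whose quotient converges to the strictly positive limit $2\ird{|x|^\lambda g}\big/\left(\ird{g^q}\right)^{1/q}$ as $s\to\infty$. The failure at the endpoint comes from heavy tails instead: with $\rho_L=|x|^{-(N+\lambda)}\chi_{\{1<|x|<L\}}$ one gets $\ird{\rho_L}=O(1)$, $\ird{\rho_L^q}\sim\ln L$, $I_\lambda[\rho_L]\lesssim\ln L$, hence $\mathcal R[\rho_L]\lesssim(\ln L)^{1-\frac1q}\to0$. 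Finally, uniqueness up to translation does not follow from ``strict rearrangement plus strict concavity of $\rho\mapsto\ird{\rho^q}$'': rearrangement yields radial symmetry of optimizers, not uniqueness, and the interaction term has no convexity against which the concavity of the entropy can be played. Uniqueness for this kind of nonlocal problem is a substantive theorem in its own right; indeed, this paper must invoke Lopes' positivity of the quadratic form $\ird{\ire{|x-y|^\lambda f(x)f(y)}}\ge0$ (quoted at the end of Section 7), which holds only for $\lambda\in[2,4]$ and fails for $\lambda>4$ — a sign of how delicate such statements are for this kernel.
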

Because $q>\frac{N}{N+\lambda}$, from the results of \cite{dolbeault2018reverse}, the free energy $\mathbb F[\rho]$ is bounded from below under the \emph{mass} constraint, and energy minimizers of $\mathcal F$ are the optimal functions of \eqref{reversehls1}. Moreover, $\rho_{\infty}$ is the unique minimizer up to translation, and
\[
\mathbb{I}[\rho]=-\frac{d}{dt}\mathbb{F}[\rho].
\]
Our goal is to show that
\begin{thm}
\label{thm1.1}
Suppose that the solution $n$ of the equation \eqref{fd1} with initial data $n_0$ satisfying (\textbf H\ref{as1}), $q \in\left(\frac {N} {N+\lambda}, 1 \right)$ and $\mathcal F[n_0]<\infty$. Then for $\lambda \ge 2$, there exist constants $\kappa, \mu>0$, such that for any $t>0$,
\[
\ird{N_{h_*}^{q-2}(n(t,.)-N_{h_*})^2}\le \kappa e^{-\mu t}.
\]
For $\lambda \in (0, 2)$, if we assume in addition $q\in\left(\frac {N+2} {N+2+\lambda},1\right)$,
then there exist constants $\kappa, \mu>0$, such that for any $t>0$,
\[
\ird{N_{h_*}^{q-2}(n(t,.)-N_{h_*})^2}\le \kappa(1+t)^{-\mu }.
\]
\end{thm}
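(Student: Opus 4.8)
The plan is to run an entropy--entropy-dissipation scheme built on the identity $\frac{d}{dt}\mathcal F[n]=-\mathcal I[n]$, with a Hardy--Poincar\'e inequality adapted to the weight $N_{h_*}$ as the key analytic input. First I would use (\textbf H\ref{as1}) and the comparison principle to keep the solution trapped, $N_{h_2}\le n(t,\cdot)\le N_{h_1}$ for all $t>0$; since $h\mapsto N_h$ is pointwise decreasing and $\ird{N_{h_*}}=\ird{n_0}$ forces $h_1\le h_*\le h_2$, the three profiles share the same decay $N_h(x)\sim c\,|x|^{\lambda/(q-1)}$ at infinity, so the ratio $n/N_{h_*}$ is bounded above and below by positive constants, uniformly in $x$ and $t$. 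This two-sided bound is what makes all subsequent comparisons between nonlinear functionals and weighted quadratic quantities uniform.

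Next I would convert the quantity in the statement into relative entropy. With $\phi(s)=-\frac{1}{1-q}s^q$, mass conservation turns $\mathcal F[n|N_{h_*}]$ into the Bregman form $\ird{\big(\phi(n)-\phi(N_{h_*})-\phi'(N_{h_*})(n-N_{h_*})\big)}$, and Taylor's theorem makes the integrand $\frac{q}{2}\,\xi^{q-2}(n-N_{h_*})^2$ for an intermediate value $\xi\asymp N_{h_*}$. Hence $\mathcal F[n|N_{h_*}]\asymp\ird{N_{h_*}^{q-2}(n-N_{h_*})^2}$, and it suffices to prove the asserted decay for $\mathcal F[n|N_{h_*}]$.

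The core is a lower bound for the dissipation. Setting $z:=n^{q-1}-N_{h_*}^{q-1}$, the trapping gives $\mathcal I[n]\asymp\ird{N_{h_*}|\nabla z|^2}$, while the mean value theorem gives $\mathcal F[n|N_{h_*}]\asymp\ird{N_{h_*}^{2-q}z^2}$; everything thus reduces to the Hardy--Poincar\'e inequality $\ird{N_{h_*}|\nabla z|^2}\ge\Lambda\,\ird{N_{h_*}^{2-q}(z-\bar z)^2}$, with $\bar z$ the $N_{h_*}^{2-q}\,dx$-mean of $z$ (the discrepancy between this mean and the exact constraint $\ird{(n-N_{h_*})}=0$ being a lower-order error that the trapping controls). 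A scaling computation on an annulus of radius $R$ shows the Rayleigh quotient $\ird{N_{h_*}|\nabla z|^2}\big/\ird{N_{h_*}^{2-q}z^2}$ scales like $R^{\lambda-2}$. For $\lambda\ge2$ this pushes the bottom of the essential spectrum away from $0$ (for $\lambda=2$ it is the classical Barenblatt case with an explicit gap), so $\Lambda>0$ and $\mathcal I[n]\ge c\,\mathcal F[n|N_{h_*}]$; Gr\"onwall then yields the exponential rate.

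For $\lambda\in(0,2)$ the same scaling gives $R^{\lambda-2}\to0$, i.e. no spectral gap, which is exactly where the algebraic rate and the stronger hypothesis $q>\frac{N+2}{N+2+\lambda}$ arise. Here I would split $\R^N=B_R\cup B_R^c$, apply a local Poincar\'e inequality on $B_R$ with constant of order $R^{2-\lambda}$, and bound the tail by the weighted second moment $M:=\ird{|x|^2 N_{h_*}^{q-2}(n-N_{h_*})^2}$ through $\int_{B_R^c}N_{h_*}^{2-q}z^2\,dx\lesssim R^{-2}M$. The trapping bounds $M\lesssim\ird{|x|^2 N_{h_*}^{q}}$, which is finite and time-uniform \emph{precisely} when $q>\frac{N+2}{N+2+\lambda}$; optimizing $R$ in $\mathcal F\lesssim R^{2-\lambda}\mathcal I+R^{-2}M$ gives the superlinear inequality $-\frac{d}{dt}\mathcal F\gtrsim\mathcal F^{1+(2-\lambda)/2}$, hence polynomial decay. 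The main obstacle throughout is the Hardy--Poincar\'e inequality: securing a positive $\Lambda$ for $\lambda\ge2$ without an explicit profile (a genuinely spectral, not Barenblatt-specific, argument) and, for $\lambda<2$, replacing it by its degenerate $R$-dependent form while proving that the controlling moment stays bounded along the flow.
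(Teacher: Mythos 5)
Your overall skeleton (trapping by comparison, relative entropy comparable to the weighted $L^2$ distance, entropy--dissipation plus a Hardy--Poincar\'e inequality, and the role of $q>\frac{N+2}{N+2+\lambda}$ as a moment-finiteness condition) matches the paper, and your $\lambda<2$ scheme is a genuinely different mechanism from the paper's: where you split $\R^N$ into $B_R\cup B_R^c$, use an $R$-dependent local Poincar\'e inequality plus a second-moment tail bound and optimize in $R$, the paper keeps a single global Hardy--Poincar\'e inequality with the weight $\mathcal M$ and then interpolates by H\"older between the true dissipation $\mathcal I$ and a moment-weighted dissipation $\mathcal I_2$ (shown uniformly bounded via the uniform $C^k$ estimates), arriving at the same type of nonlinear differential inequality $-\frac{d}{dt}\mathcal F\gtrsim\mathcal F^{1+\beta}$. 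However, there are two genuine gaps in your proposal.

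First, you are missing the ``convergence without rate'' step, which is an essential ingredient and occupies all of Section~\ref{Sec3:fastdiffu2} of the paper. Your replacement of the exact constraint $\ird{n-N_{h_*}}=0$ by the zero-mean condition on $z$ produces, as you note, a quadratic error; concretely the best you can get from trapping is $\mathcal F\le C_1\mathcal I + C_2\mathcal F^2$, with $C_1,C_2$ fixed constants depending on $W_0=(h_*/h_2)^{1/(1-q)}$ and $W_1=(h_*/h_1)^{1/(1-q)}$. Trapping does \emph{not} control this error: it gives fixed constants, not smallness, and the term $C_2\mathcal F^2$ can only be absorbed once $\mathcal F(t)<1/(2C_2)$. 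Monotonicity of $\mathcal F$ together with $\int_0^\infty \mathcal I\,dt\le\mathcal F[n_0]$ does not rule out that $\mathcal F$ stalls near the large root of $x(1-C_2x)=0$, so Gr\"onwall cannot even start. The paper resolves this by first proving $\|w(t)-1\|_{L^\infty}\to0$ (uniform $C^k$ regularity, Fatou's lemma applied to the dissipation along a time sequence, then interpolation), and only then running the rate argument after a waiting time, when $W_0,W_1$ can be taken arbitrarily close to $1$; note the paper also sidesteps your mean-discrepancy issue by linearizing in $\eta=(w-1)N_{h_*}^{q-1}$, which satisfies $\ird{\eta N_{h_*}^{2-q}}=\ird{n-N_{h_*}}=0$ exactly, at the price of an error term (Lemma~\ref{lem4.3}) that again is small only thanks to the uniform convergence. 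Whichever variant you choose, this waiting-time argument must be supplied.

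Second, for $\lambda>2$ your key analytic input, the ``flat'' Hardy--Poincar\'e inequality $\ird{N_{h_*}|\nabla z|^2}\ge\Lambda\ird{N_{h_*}^{2-q}(z-\bar z)^2}$, is asserted on the strength of an annulus scaling computation. That computation only shows the Rayleigh quotient does not degenerate at spatial infinity; it does not produce a positive gap, and you yourself flag this as the main obstacle without closing it. The paper's device here is precisely what you are missing: it works with the modified weight $\mathcal M=(1+|x|^{\lambda-2})^{-1}$ and uses the two-sided equivalence $1+|x|^\lambda\asymp(1+|x|^2)(1+|x|^{\lambda-2})$ --- valid exactly when $\lambda\ge2$ --- to reduce Lemma~\ref{lemhp} to the known Hardy--Poincar\'e inequalities of \cite{blanchet2007hardy,blanchet2009asymptotics}; since $\mathcal M\le1$ for $\lambda\ge 2$, this weighted inequality implies your flat one and yields $\mathcal F[w]\le\gamma\mathcal I_1[w]\le\gamma\mathcal I[w]$, hence the exponential rate. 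With these two repairs (the Section~\ref{Sec3:fastdiffu2}-type uniform convergence, and a citable or proved Hardy--Poincar\'e input for $\lambda\ge2$), your $\lambda<2$ ball/tail argument appears viable and is an acceptable alternative to the paper's H\"older interpolation, since your bound $M\lesssim\ird{|x|^2N_{h_*}^q}<\infty$ invokes the hypothesis $q>\frac{N+2}{N+2+\lambda}$ for exactly the same reason the paper does.
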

For equation \eqref{fd2} we have similar results when $\lambda =2$.
\begin{thm}
\label{thm1.3}
For $\lambda=2$, suppose that the solution $\rho$ of the equation \eqref{fd2} with initial data $\rho_0$ that satisfies (\textbf H2) and $\mathbb F[\rho_0]<\infty$. Then for all $q\in\left(\frac{N}{N+2},1\right)$, there exist constants $\tau, \gamma>0$, such that for any $t>0$,
\begin{equation*}
\ird{\rho_\infty^{q-2}(\rho(t,.)-\rho_\infty)^2}\le \tau e^{-2\gamma t}.
\end{equation*}
\end{thm}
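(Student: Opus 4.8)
The plan is to mirror the entropy-method strategy that drives Theorem~\ref{thm1.1}, now applied to the mean-field functional $\mathbb F$, and to exploit the fact that for $\lambda=2$ the convolution structure collapses into an affine potential. First I would record the identity $\frac{d}{dt}\mathbb F[\rho]=-\mathbb I[\rho]$ (already stated in the excerpt) and the explicit form \eqref{case2}, namely $\frac{q}{1-q}\rho_\infty^{q-1}=\tfrac12|x|^2+C$. The key simplification is that $\nabla V_2*\rho=\int_{\R^N}(x-y)\rho(y)\,dy=x\,\big(\int\rho\big)-\int y\,\rho(y)\,dy$; under the normalized mass $\int_{\R^N}\rho\,dx=1$ and the conserved center-of-mass condition $\int_{\R^N}y\,\rho(y)\,dy=0$ (guaranteed by (\textbf H2) and the vanishing of $\tfrac{d}{dt}\int x_i\rho$ noted in the introduction), this reduces to $\nabla V_2*\rho=x=\nabla V_2(x)$. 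Consequently, along the flow, equation \eqref{fd2} behaves exactly like the external-drift equation \eqref{fd1} with harmonic potential $V_2$, and the relative Fisher information \eqref{free6} becomes $\mathbb I[\rho]=\int_{\R^N}\rho\,\big|\tfrac{q}{1-q}\nabla\rho^{q-1}-x\big|^2\,dx=\tfrac{q^2}{(1-q)^2}\int_{\R^N}\rho\,\big|\nabla(\rho^{q-1}-\rho_\infty^{q-1})\big|^2\,dx$, i.e. it coincides structurally with \eqref{fisher1}.

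Second, I would establish the entropy--entropy-production inequality $\mathbb I[\rho]\ge 2\gamma\,\mathbb F[\rho|\rho_\infty]$ for some $\gamma>0$. Because the $\lambda=2$ reduction makes the problem formally identical to the harmonic external-drift case, this is precisely a Hardy--Poincar\'e inequality linearized about $\rho_\infty$, exactly the tool invoked for Theorem~\ref{thm1.1} with $\lambda=2$ (which gives the exponential, not polynomial, rate). I would linearize: writing $\rho=\rho_\infty(1+\rho_\infty^{q-2}g)$ or, more conveniently, introducing the perturbation in the weighted $\mathrm L^2$ space with weight $\rho_\infty^{q-2}$, the second variation of $\mathbb F$ at $\rho_\infty$ is controlled below by the weighted Poincar\'e constant, while $\mathbb I$ is controlled above. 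Here one must also check that the \emph{nonlocal} part of the second variation of $\mathbb F$ — the term coming from $\tfrac{1}{2\lambda}I_\lambda$ — contributes with a favorable sign; for $\lambda=2$ this nonlocal quadratic form is $\int\int (x-y)\cdot(\text{perturbation})$, which by the fixed center of mass reduces to a controllable rank-one correction, so the local Hardy--Poincar\'e bound survives.

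Third, once $\mathbb I[\rho(t)]\ge 2\gamma\,\mathbb F[\rho(t)|\rho_\infty]$ holds uniformly in $t$, Gr\"onwall applied to $\tfrac{d}{dt}\mathbb F[\rho|\rho_\infty]=-\mathbb I[\rho]\le -2\gamma\,\mathbb F[\rho|\rho_\infty]$ gives $\mathbb F[\rho(t)|\rho_\infty]\le e^{-2\gamma t}\,\mathbb F[\rho_0|\rho_\infty]$. Finally I would convert relative entropy control into the weighted $\mathrm L^2$ bound in the statement by a Taylor expansion (Csisz\'ar--Kullback-type inequality for the nonlinear entropy): using (\textbf H2) to guarantee $\rho_2\le\rho(t,\cdot)\le\rho_1$ so that $\rho$ stays in a regime where $\rho_\infty^{q-2}(\rho-\rho_\infty)^2$ is comparable to the relative entropy integrand, one obtains $\int_{\R^N}\rho_\infty^{q-2}(\rho-\rho_\infty)^2\,dx\le \tau\,\mathbb F[\rho|\rho_\infty]\le \tau\,\mathbb F[\rho_0|\rho_\infty]\,e^{-2\gamma t}$.

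The main obstacle I anticipate is the second step, specifically verifying that the nonlocal second-variation term does not spoil the spectral gap. Unlike the genuinely local equation \eqref{fd1}, here the linearized operator carries the convolution $\nabla V_2*(\cdot)$, and one must show that after restricting to perturbations with zero mass and zero first moment (the natural constraints preserved by the flow), the resulting quadratic form is still bounded below by a positive multiple of the weighted Dirichlet form. The uniform a priori bounds $\rho_2\le\rho\le\rho_1$ from (\textbf H2), together with the finiteness of $\mathbb F[\rho_0]$ and the radial monotonicity of $\rho_\infty$ from the reverse HLS theorem, are what I expect to make both this coercivity estimate and the final Csisz\'ar--Kullback conversion rigorous.
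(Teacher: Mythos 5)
Your proposal is correct, and its core mechanism is genuinely different from the paper's. Your reduction --- mass normalization plus the conserved, vanishing center of mass give $\nabla V_2*\rho(t,\cdot)=x$ for all $t$, so a solution of \eqref{fd2} with $\lambda=2$ is literally a solution of the external-drift equation \eqref{fd1} with harmonic potential, with $\rho_\infty=N_{h_*}$ --- is valid, and it makes Theorem~\ref{thm1.3} essentially a corollary of Theorem~\ref{thm1.1}; note that once this reduction is made at the level of the equation, the nonlocal second-variation term you flag as the main obstacle is not merely ``controllable'': it is the moment functional $\Psi_3(g)=\sum_{i=1}^N\bigl(\int_{\R^N}x_i g\rho_\infty\,dx\bigr)^2$ (rank $N$, not rank one), and it vanishes identically along the flow. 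The paper does not use this reduction. Instead it linearizes the mean-field functionals themselves (Section~\ref{Sec6:nonlidiffu2}): the nonlocal contribution is computed exactly, equal to $-\Psi_3(g)$ inside $\mathbb Q_1[g]$ and $+3\Psi_3(g)$ inside $\mathbb Q_2[g]$, so both signs are favorable and no zero-moment constraint is ever invoked; the coercivity $\mathbb F[v]\le\gamma\,\mathbb I[v]$ then follows from the Hardy--Poincar\'e inequality $\Psi_2(g)\ge\mathcal C_{q,N,2}\,\Psi_1(g)$ together with the remainder estimate of Lemma~\ref{lem5.5} (which requires $\mathcal W_0,\mathcal W_1$ close to $1$, i.e.\ the convergence-without-rate step of Lemma~\ref{lem5.4}), and one concludes by Gr\"onwall and Lemma~\ref{lem5.3}, exactly as in your third and fourth steps. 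What each route buys: yours is shorter and cleaner, but it is entirely special to $\lambda=2$ (it needs $\nabla V_\lambda$ to be affine) and requires one check you did not spell out --- that (H2)-sandwiching by stationary solutions of \eqref{fd2}, whose centered profiles are $\bigl(\frac{1-q}{q}(\frac{m}{2}|x|^2+C)\bigr)^{1/(q-1)}$ with masses $m_2\le 1\le m_1$, implies (H1)-sandwiching by the $N_h$ so that Theorem~\ref{thm1.1} applies as a black box; the paper's computation keeps the nonlocal structure explicit, which is precisely the template that extends to $\lambda>2$ in Section~\ref{Sec7large}, where no such reduction is available.
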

For general $\lambda>2$, we have the similar result when $q$ is near 1.
\begin{thm}
\label{thm1.4}
Suppose that $\lambda>2$, and the solution $\rho$ of the equation \eqref{fd2} with initial data $\rho_0$ that satisfies (\textbf H2) and $\mathbb F[\rho_0]<\infty$. Then
there exists a constant $q_{N,\lambda}\in (\frac{2N}{2N+\lambda}, 1)$, such that for any $q\in (q_{N,\lambda}, 1)$ , there exist constants $\tau, \gamma>0$ such that for any $t>0$,
\[
\mathbb{F}[\rho|\rho_\infty]\le\tau e^{-2\gamma t}\mathbb{F}[\rho_0|\rho_\infty]
\]
in particular if $\lambda \in (2,4]$, we have
\begin{equation*}
\ird{\rho_\infty^{q-2}(\rho(t,.)-\rho_\infty)^2}\le \tau e^{-2\gamma t}.
\end{equation*}
\end{thm}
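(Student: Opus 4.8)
The plan is to run the entropy method: establish an entropy--entropy production inequality $\mathbb{I}[\rho]\ge 2\gamma\,\mathbb{F}[\rho|\rho_\infty]$ along the flow and then conclude by Gr\"onwall's lemma. Since $\mathbb{I}[\rho]=-\frac{d}{dt}\mathbb{F}[\rho]=-\frac{d}{dt}\mathbb{F}[\rho|\rho_\infty]$, such an inequality yields $\frac{d}{dt}\mathbb{F}[\rho|\rho_\infty]\le-2\gamma\,\mathbb{F}[\rho|\rho_\infty]$, hence $\mathbb{F}[\rho|\rho_\infty]\le e^{-2\gamma t}\mathbb{F}[\rho_0|\rho_\infty]$, which is the first assertion (with $\tau=1$). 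Assumption (\textbf H2) enters precisely here: by the comparison principle the bounds $\rho_2\le\rho(t,\cdot)\le\rho_1$ propagate in time, so the whole trajectory stays in a fixed admissible class on which the functional inequality is only required to hold.

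First I would linearize around the stationary state. Writing $\rho=\rho_\infty+g$ with $\ird{g}=0$ and expanding to second order, the stationarity relation $\frac{q}{1-q}\rho_\infty^{q-1}=V_\lambda*\rho_\infty+C$ makes the first-order terms vanish, and the Hessian of the relative entropy is the quadratic form
\[
Q[g]=\frac{q}{2}\ird{\rho_\infty^{q-2}g^2}+\frac{1}{2\lambda}\ire{\ird{|x-y|^\lambda g(x)g(y)}}.
\]
Expanding the bracket of \eqref{free6} in the same way, the leading contribution to the relative Fisher information is
\[
\mathbb{I}[\rho]\approx\ird{\rho_\infty\left|q\,\nabla\left(\rho_\infty^{q-2}g\right)+\nabla V_\lambda*g\right|^2}.
\]
The linearized entropy--entropy production inequality is then the statement that this second form dominates $2\gamma\,Q[g]$, i.e. a Hardy--Poincar\'e inequality with weight $\rho_\infty^{q-2}$ for the linearized operator, nonlocal correction included.

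The main obstacle is precisely this spectral gap estimate in the presence of the nonlocal term. For $\lambda=2$ and centered $g$ one has $\ire{\ird{|x-y|^2g(x)g(y)}}=-2|\ird{x\,g}|^2$, which vanishes, so the operator reduces to the purely local one already controlled for equation \eqref{fd1}; this is the mechanism behind Theorem~\ref{thm1.3}. For $\lambda>2$ the kernel $|x-y|^\lambda$ is no longer conditionally negative, so the nonlocal part of $Q$ has indefinite sign and may degrade the gap. The role of the condition ``$q$ near $1$'' is to treat this piece perturbatively: as $q\to1$ the local operator, with its known positive gap, dominates, and the nonlocal correction is a relatively bounded perturbation of small relative bound, so a positive $\gamma$ survives. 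Making this quantitative is what produces the threshold $q_{N,\lambda}\in(\frac{2N}{2N+\lambda},1)$; I expect the bulk of the work to be bounding the convolution term by $\ird{\rho_\infty^{q-2}g^2}$ uniformly on the admissible class, which is where the decay of $\rho_\infty$ and the range of $\lambda$ enter.

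Two passages then remain. To upgrade the linearized inequality to the genuine nonlinear one, I would use the uniform bounds $\rho_2\le\rho\le\rho_1$ from (\textbf H2) to control the second-order Taylor remainders of both $\mathbb{F}[\rho|\rho_\infty]$ and $\mathbb{I}[\rho]$, making both functionals comparable to their quadratic approximations with $t$-independent constants; this transfers the gap (possibly with a smaller $\gamma$) to the full functionals and, together with the Gr\"onwall step, proves the exponential decay of $\mathbb{F}[\rho|\rho_\infty]$. To obtain the weighted $L^2$ statement when $\lambda\in(2,4]$, I would prove a Csisz\'ar--Kullback-type lower bound $\mathbb{F}[\rho|\rho_\infty]\ge c\,\ird{\rho_\infty^{q-2}(\rho-\rho_\infty)^2}$, again a consequence of the uniform convexity encoded in $Q$ on the admissible class; the restriction $\lambda\le4$ is what keeps the nonlocal contribution dominated in this comparison. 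Combined with the already established exponential decay of the entropy, this yields the stated $L^2$ estimate.
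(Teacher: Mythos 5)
Your overall strategy is the same as the paper's: linearize around $\rho_\infty$, prove a spectral gap for the local part, treat the convolution terms perturbatively for $q$ close to $1$, close with Gr\"onwall, and finish the $\lambda\in(2,4]$ case with a Csisz\'ar--Kullback-type bound. In the paper this is implemented by Lemmas \ref{L71}, \ref{lem7.2} (moment bounds on $\rho_\infty$ uniform in $q$, which via Cauchy--Schwarz bound the convolution terms by $\Lambda\ird{\rho_\infty^{q-2}j^2}$), Lemma \ref{lem7.4} (a weighted Hardy--Poincar\'e inequality whose constant grows like $(1-q)^{-1}$), and Lemma \ref{lem8.1}. Two steps of your plan, however, have genuine gaps.

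First, the nonlinear-to-linear transfer. You claim that the fixed bounds $\rho_2\le\rho\le\rho_1$ from (\textbf{H2}) make $\mathbb I$ comparable to its quadratic approximation ``with $t$-independent constants'', and consequently that $\tau=1$. This step fails as stated: $\mathbb I[v]$ has the form $\ird{\rho_\infty v\,|A_{nl}-B|^2}$ while its quadratic model is $\ird{\rho_\infty |A_{lin}-B|^2}$, and because of cancellations between the vector fields, pointwise comparability of $A_{nl}$ with $A_{lin}$ gives no multiplicative comparability of the two integrals (either integrand can vanish where the other does not). The only available estimate is additive, as in the paper's Lemma \ref{lem5.5}: the discrepancy is bounded by $\epsilon\,(\Psi_1(g)+\Psi_2(g))$, where $\epsilon$ is governed by quantities like $\sup|v^{q-2}-1|$, hence is small only when $\|v-1\|_{L^\infty}$ is small, not merely when $\mathcal W_0,\mathcal W_1$ are finite. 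Since the $\epsilon\Psi_2$ error must be subtracted from a main term of size $\sim q^2\Psi_2$, the entropy--entropy production inequality only closes once $v(t)$ is uniformly near $1$. This is exactly why the paper first proves convergence \emph{without} rate (Lemma \ref{lem5.4}) and waits until a time $t_0(\epsilon)$; Gr\"onwall then runs from $t_0$ onward and, using only monotonicity of the entropy on $[0,t_0]$, one gets $\tau=e^{2\gamma t_0}>1$ rather than $\tau=1$. Your proposal is missing this qualitative convergence step entirely.

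Second, the $\lambda\in(2,4]$ conclusion. The lower bound $\mathbb F[\rho|\rho_\infty]\ge c\ird{\rho_\infty^{q-2}(\rho-\rho_\infty)^2}$ cannot follow from ``uniform convexity on the admissible class'': the whole difficulty is the sign of the interaction term $\frac1{2\lambda}\ire{\ird{|x-y|^\lambda j(x)j(y)}}$ with $j=\rho-\rho_\infty$, which for $\lambda>2$ is indefinite on zero-mass functions, so it could in principle cancel the entropy part from below. The paper resolves this with a specific nontrivial input, Lopes' theorem (Theorem \ref{Thmsp}): if $\ird{f}=0$ and $\ird{x_if}=0$ for all $i$, then $\ird{\ire{|x-y|^\lambda f(x)f(y)}}\ge 0$ for $\lambda\in[2,4]$, and this is false for $\lambda>4$ --- which is precisely why the weighted $L^2$ statement stops at $\lambda=4$. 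Applying it to $j$ requires the zero first moments of $j$, which hold because first moments are conserved by the flow and $\rho_\infty$ is centered; you use this normalization implicitly for $\lambda=2$ but never record it here. With Lopes' positivity one simply drops the interaction term and concludes from Lemma \ref{lem5.3}; without it, your final comparison has no proof. Two minor remarks: your $+$ sign in the linearized Fisher information is the correct Taylor expansion (the displayed $\mathbb Q_2$ in \eqref{qua22} carries a harmless sign typo, immaterial here since the convolution term is estimated in absolute value), and for $\lambda>2$ both the remainder estimate and the spectral gap must carry the weight $\mathcal M=(1+|x|^{\lambda-2})^{-1}$, so the required Hardy--Poincar\'e inequality is not the classical one but the weighted version of Lemma \ref{lem7.4}, proved via the criterion of Lemma \ref{lem7.7}.
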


\begin{rmk}
We remark here that for equation \eqref{fd1} if we assume further that \\
(\textbf H1')There exists a constant $h_*\in[h_1,h_2]$, such that
\begin{equation*}
p(x):=n_0(x)-N_{h_*}(x)\in L^1(\mathbb{R}^d).
\end{equation*}
Then we can extend the Theorem \ref{thm1.1} to the case $q\in (0,1)$ when $N=1$ or 2, and $q \in (0, 1)\backslash\{\frac{N-2-\lambda}{N-2}\}$ when $N\ge 3$ with the similar method, including the pseudo-Barenblatt solutions case that $q \in (0, \frac{N-2-\lambda}{N-2})$ when $N\ge 3$. The readers are invited to check \cite{vazquez2006smoothing, blanchet2009asymptotics} for more information.
\end{rmk}

\subsection{Background}
The nonlinear fast diffusion equations have caught many attentions, see \cite{vazquez2006smoothing} for a more precise introduction. And studying the asymptotic rates of convergence to the stationary states is an important theme. For the equation \eqref{fd1}, the harmonic potential case that $\lambda=2$ has been studied by \cite{dolbeault2018reverse}. The result is based on the spectral method of the linearized equation, and the optimal rate of the convergence can be directly deduced by the spectral gap, which is the optimal constant of Hardy-Poincar\'e inequality. See \cite{blanchet2007hardy} for more details. The mean-field equation \eqref{fd2} is more complicated, since mean field potential $W_\lambda = V_\lambda*\rho$ depends on the regular part $\rho$, and for more general $\lambda$, there is no explicit form of $\rho_{\infty}$ for the estimate. This equation behaves different with different choice of $\lambda$ and $q$. Recall the functional energy 
\[
\mathbb{F}[\rho]=-\frac{1}{1-q}\ird{\rho^q}+\ire{\ird{V_{\lambda}(x-y)\rho(x)\rho(y)}}
\]
if we take the mass-preserving dilations
\[
\rho_\lambda(x) = \beta^N \rho(\beta x)
\]
so
\[
\mathbb{F}[\rho]=-\frac{\beta^{N(q-1) }}{1-q}\ird{\rho^q}+\beta^{-\lambda}\ire{\ird{V_{\lambda}(x-y) \rho(x)\rho(y)}}
\]
and one observes different types of behavior depending on the relation between the parameters $N, q$ and $\lambda$. The energy functional is homogeneous if attraction and repulsion are in balance, so that the two terms of the energy scale with the same power, that is, $q=q_*$ with
\[
q_* = 1- \frac \lambda N
\]
this motivates the definition of three different regimes: the \emph{diffusion-dominated regime} $q > q_*$, the \emph{fair-competition regime} $q = q_*$, and the \emph{attraction-dominated regime} $0<q<q_*$. We refer to \cite{hoffmann2017keller} for a complete summary of existing works. The fair-competition case has been studied in several papers such as \cite{calvez2017equilibria, calvez2019uniqueness}. 

In our paper we consider the case $q>\frac {N} {N+\lambda} > 1-\frac {\lambda} N =q_*$, which correspond to the diffusion-dominated regime. In the diffusion dominated regime, several results have been done for the case $-N< \lambda <0$ and $q>1$, the logarithmic case $\lambda = 0, q>1$ in two dimensions \cite{carrillo2015ground}, and the Newtonian case $\lambda = 2-N$ in \cite{blanchet2009critical,kim2012the}. For our case $\lambda >0, q>\frac {N} {N+\lambda}$, the existence and uniqueness of $\rho_\infty$ has been given in \cite{dolbeault2018reverse}, we remark here that  the asymptotic behavior of the case $\lambda >0, q>\frac {N} {N+\lambda}$ is to our knowledge new.

 As we introduced above, the free energy and the related Fisher information are key tools, but we need the propositions about the bound and the minimizers of the free energy.  For $q<1$, the reverse Hardy-Littlewood-Sobolev inequalities in \cite{dolbeault2018reverse, dou2015reverse, ngo2017sharp} provide the sufficient conditions. Intersted readers can check \cite{dolbeault2018reverse} for more information. 
 
 \subsection{Sketch of the proof}
In this paper, the proof for the paper is to use linearization around equilibrium plus nonlinear stability, roughly speaking, for a mass conserving equation
\[
\partial_t f= L f
\]
with equilibrium $f_\infty$, we linearize the operator $L=L_1+L_2$ (where $L_1$ is linear) around equilibrium $g=f-f_\infty$, which is
\[
\partial_t g =L_1 g+L_2g
\]
then we first prove the convergence for the linearized eqution
\[
\partial_t g=L_1g
\]
then we use the nonlinear stability to prove that the convergence results still holds for the nonlinear equation when the initial data $f_0$ is close to equilibrium
\[
\Vert f_0 - f_\infty\Vert_{X} \le \epsilon
\]
for some space $X$ and some $\epsilon>0$ small, then we use a (weak) global convergence to prove that, for any $f_0 \in X$, we can find a time $t_0>0$  such that
\[
\Vert f(t) - f_\infty\Vert_{X} \le \epsilon, \quad \forall  t \ge t_0
\]
the global convergence can be very weak, in this paper we use
\[
\lim_{t\to \infty} \Vert f(t)-f_\infty \Vert_X=0
\]
which can be proved by the entropy method, gathering the two things we complete the proof of the asymptotic behavior for the full nonlinear problem . Such method can improve convergence rate and  get better rate of convergence at large time. It's largely used in the asymptotic behavior of many nonlinear equations, see \cite{carrapatoso2017landau,herau2020regularization} for its use in Boltzmann and Landau equation for example.
\subsection{Plan of the paper.} Sections~\ref{Sec2:fastdiffu1}-\ref{Sec4:fastdiffu3} are devoted to the fast diffusion with external drift. In Section~\ref{Sec2:fastdiffu1}, we give the results about the free energy and the comparison principle. In Section~\ref{Sec3:fastdiffu2}, we prove the result about the convergence without rate and the convergence with rate in Section~\ref{Sec4:fastdiffu3}. Sections~\ref{Sec5:nonlidiffu1} -\ref{Sec7large} are about the mean-field equation with convolution term. Section~\ref{Sec5:nonlidiffu1} is about some basic properties about the linearized equation. The case that $\lambda=2$ is simple, we prove the result about large time asymptotic behavior in Section~\ref{Sec6:nonlidiffu2}, and in Section~\ref{Sec7large}, we deal with more general $\lambda>2$.
\subsection{Notations}
We will denote
\[h_k(x):=\frac{x^{k-1}-1}{k-1}\]
\begin{equation}
\label{weight1}
\mathcal M(x):=\left\{
\begin{aligned}
  & \frac {1} {1+|x|^{\lambda-2}} , &\quad \lambda \ne 2 \\
  & 1, &\quad \lambda =2
\end{aligned}
\right.
\end{equation}
and $S _N$ denotes the area of unit N-dimension sphere.

\bigskip{\bf Acknowledgments.} The first author has been supported by grants from Beijing Institute of Mathematical Sciences and Applications and Yau Mathematical Science Center, Tsinghua University, and the second author has been supported by grants from Ceremade, Universit\'e Paris Dauphine. The authors thank J.Dolbeault for introducing the problem and useful comments.\\

\section{External drift type equation: some preparations}
\label{Sec2:fastdiffu1}

\subsection{The free energy and its minimizer}
We first prove the basic propositions about the free energy $\mathcal F[n]$ defined in $\eqref{free1}$, and the existence of the minimizers of $\mathcal F[n]$ under the condition $\ird{n(x)}=m$. 
\begin{prop}
The free energy $\mathcal F[n]$ satisfies\\

(1)For $h\ge 0$, the free energy $\mathcal F[N_h]$ is increasing by $h$, which means that it is decreasing by the mass $m$.\\

(2)For any $n>0, \ird{n(x)}=m$, we have $\mathcal F[n]\ge \mathcal F[N_h]$, and equality fits if and only if $n=N_h$. Here $h$ is decided by the equation \eqref{mass}. In particular, $\mathcal F[n]$ is bounded from below.
\end{prop}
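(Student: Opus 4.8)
The plan is to treat the two parts separately, both hinging on the Euler--Lagrange identity satisfied by $N_h$, namely
\[
\frac{q}{1-q}N_h^{q-1}=h+V_\lambda,
\qquad\text{equivalently}\qquad
V_\lambda-\frac{q}{1-q}N_h^{q-1}=-h,
\]
which follows at once from the explicit form of $N_h$.

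For part (1), I would differentiate $\mathcal F[N_h]$ in $h$. Writing $A:=\tfrac{1-q}{q}$, one computes $\partial_h N_h=\tfrac{A}{q-1}N_h^{2-q}$, which is strictly negative since $q<1$; hence $N_h$ decreases pointwise in $h$ and $m(h):=\ird{N_h}$ is strictly decreasing, giving $m'(h)<0$. Differentiating under the integral sign,
\[
\frac{d}{dh}\mathcal F[N_h]
=\ird{\(V_\lambda-\frac{q}{1-q}N_h^{q-1}\)\partial_h N_h}
=-h\,\ird{\partial_h N_h}
=-h\,m'(h),
\]
where the middle step uses the Euler--Lagrange identity. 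For $h>0$ this is strictly positive, so $\mathcal F[N_h]$ is increasing in $h$; combined with $m'(h)<0$, it is decreasing as a function of the mass $m$.

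For part (2), I would use the strict convexity of $s\mapsto-\tfrac{1}{1-q}s^q$ on $(0,\infty)$, whose second derivative equals $q\,s^{q-2}>0$. The tangent-line inequality at $s=N_h$ reads
\[
-\frac{1}{1-q}n^q\ge-\frac{1}{1-q}N_h^q-\frac{q}{1-q}N_h^{q-1}(n-N_h),
\]
with equality if and only if $n=N_h$. Adding $V_\lambda n$, integrating, and using both the mass constraint $\ird{n}=\ird{N_h}=m$ and the Euler--Lagrange identity to rewrite the linear term as $\ird{(V_\lambda-\tfrac{q}{1-q}N_h^{q-1})n}=-hm$, the right-hand side collapses exactly to $\mathcal F[N_h]=-\ird{N_h^q}-hm$. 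This yields $\mathcal F[n]\ge\mathcal F[N_h]$ with equality iff $n=N_h$, and since $\mathcal F[N_h]$ is finite it proves the lower bound.

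The main obstacle is integrability and the justification of differentiating under the integral. Since $V_\lambda\sim\tfrac1\lambda|x|^\lambda$ at infinity, one has $N_h\sim c\,|x|^{\lambda/(q-1)}$, so $N_h^q$ and $V_\lambda N_h$ behave like $|x|^{\lambda q/(q-1)}$; these lie in $L^1(\R^N)$ precisely when $q>\tfrac{N}{N+\lambda}$, which is exactly the standing hypothesis and guarantees $\mathcal F[N_h]$ is finite. For the convexity step the only delicate term is $\ird{N_h^{q-1}n}$, which is controlled because $N_h^{q-1}\sim c(h+V_\lambda)$ and $\ird{V_\lambda n}<\infty$ whenever $\mathcal F[n]<\infty$ (if $\mathcal F[n]=\infty$ the bound is trivial); the equality case then follows from strict convexity. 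The differentiation under the integral sign in part (1) is justified by the same decay estimates, uniformly for $h$ in compact subsets of $(0,\infty)$.
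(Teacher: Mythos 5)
Your proof is correct and takes essentially the same route as the paper: for (1) both differentiate $\mathcal F[N_h]$ in $h$ (the paper's explicit computation yields exactly your identity $\frac{d}{dh}\mathcal F[N_h]=-h\,m'(h)\ge 0$ in disguise), and for (2) both use the tangent-line inequality for the concave/convex power function together with the mass constraint and the relation $\frac{q}{1-q}N_h^{q-1}=h+V_\lambda$. Your explicit verification that $m'(h)<0$ and the integrability checks are small additions the paper leaves implicit, but they do not change the argument.
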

\begin{proof}
(1)Remind that
\begin{equation*}
N_h=\left(\frac{1-q}{q}\right)^\frac{1}{q-1}\cdot(h+V_{\lambda})^\frac{1}{q-1}
\end{equation*}
so
\begin{equation*}
\begin{aligned}
\mathcal F[N_h]&=\frac{1}{q-1}\ird{N_h^q}+\ird{V_{\lambda}N_h}\\
&=\left(\frac{1-q}{q}\right)^\frac{1}{q-1}\cdot\left(-\frac{1}{q}\ird{(h+V_{\lambda})^\frac{q}{q-1}}+\ird{V_{\lambda}(h+V_{\lambda})^\frac{1}{q-1}}\right)
\end{aligned}
\end{equation*}
which means that
\begin{equation*}
\begin{aligned}
\frac{d}{d h}\mathcal F[N_h]
&=\left(\frac{1-q}{q}\right)^\frac{1}{q-1}\cdot\left(-\frac{1}{q-1}\ird{(h+V_{\lambda})^\frac{1}{q-1}}+\frac{1}{q-1}\ird{V_{\lambda}(h+V_{\lambda})^\frac{2-q}{q-1}}\right)\\
&=\left(\frac{1-q}{q}\right)^\frac{1}{q-1}\cdot\frac{1}{1-q}\ird{h(h+V_{\lambda})^\frac{2-q}{q-1}}\ge 0.
\end{aligned}
\end{equation*}
(2)Notice that 
\begin{equation*}
\mathcal F[n]-\mathcal F[N_h]=\frac{1}{1-q}\ird{q N_h^{q-1}(n-N_h)-(n^q-N_h^q)}
\end{equation*}
and from the inequality
\begin{equation*}
q b^{q-1}(a-b)-(a^q-b^q)\ge 0
\end{equation*}
for any $a, b\ge 0$, we get the result.
\end{proof}

\subsection{Comparison principle.}

Before proving the main theorem, we still need that
\begin{lem}
For any two non-negative solutions $n_1$ and $n_2$ of equation \eqref{fd1}, defined on a time interval $[0,T]$ with initial data in $L^1(\R^d)$, and any two times $t_1$ and $t_2$ such that $0 \le t_1 \le t_2 \le T$, we have
\[
\int_{\R^d}|n_1(t_2) -n_2(t_2)| dx \le\int_{\R^d}|n_1(t_1) -n_2(t_1)| dx 
\]
and even stronger
\[
\int_{\R^d}|n_1(t_2) -n_2(t_2)|_+ dx \le\int_{\R^d}|n_1(t_1) -n_2(t_1)|_+ dx .
\]
\end{lem}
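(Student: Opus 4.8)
The plan is to prove the stronger, positive-part inequality
\[
\frac{d}{dt}\int_{\R^d}\bigl(n_1(t)-n_2(t)\bigr)_+\,dx\le 0,
\]
and then recover the $L^1$ contraction by applying it once to the pair $(n_1,n_2)$ and once to $(n_2,n_1)$ and adding, since $|n_1-n_2|=(n_1-n_2)_++(n_2-n_1)_+$. Writing $w:=n_1-n_2$ and $\psi:=n_1^q-n_2^q$, subtracting the two copies of \eqref{fd1} gives the evolution equation
\[
w_t=\Delta\psi+\nabla\cdot(w\,\nabla V_\lambda).
\]
The crucial structural fact is that, since $q\in(0,1)$, the map $s\mapsto s^q$ is strictly increasing on $[0,\infty)$, so $w$ and $\psi$ have the same sign pointwise and vanish on the same set; this is what lets us test against a function of $\psi$ while still reconstructing $\partial_t\int w_+$.

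First I would introduce a smooth non-decreasing approximation $p_\epsilon$ of the Heaviside function $s\mapsto\mathbf 1_{\{s>0\}}$, with $0\le p_\epsilon\le 1$, $p_\epsilon\equiv 0$ on $(-\infty,0]$, $p_\epsilon'\ge 0$, and $\operatorname{supp}p_\epsilon'\subset(0,c_\epsilon)$ with $c_\epsilon\to 0$. Testing the equation for $w$ against $p_\epsilon(\psi)$ and integrating by parts, the diffusion term becomes
\[
\int_{\R^d}p_\epsilon(\psi)\,\Delta\psi\,dx=-\int_{\R^d}p_\epsilon'(\psi)\,|\nabla\psi|^2\,dx\le 0,
\]
using $p_\epsilon'\ge 0$; the boundary terms at infinity drop out once $\psi$ and $\nabla\psi$ decay, which follows from the $L^1\cap L^\infty$ bounds and interior parabolic regularity for bounded solutions of \eqref{fd1} (as in \cite{blanchet2009asymptotics,vazquez2006smoothing}). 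The drift term is, without further manipulation, $\int_{\R^d}p_\epsilon(\psi)\,\nabla\cdot(w\,\nabla V_\lambda)\,dx$.

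Passing to the limit $\epsilon\to 0$: since $\psi$ and $w$ share their sign and zero set, $p_\epsilon(\psi)\to\mathbf 1_{\{w>0\}}$ boundedly, so the left-hand side converges to $\int_{\R^d}\mathbf 1_{\{w>0\}}\,w_t\,dx=\frac{d}{dt}\int_{\R^d}w_+\,dx$, the diffusion term stays $\le 0$, and the drift term converges to $\int_{\{w>0\}}\nabla\cdot(w\,\nabla V_\lambda)\,dx$, which equals $\int_{\partial\{w>0\}}w\,\nabla V_\lambda\cdot\nu\,dS=0$ because $w$ vanishes on $\partial\{w>0\}$ by continuity. Collecting the three limits gives $\frac{d}{dt}\int_{\R^d}w_+\,dx\le 0$, and integrating from $t_1$ to $t_2$ yields the claim.

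The main obstacle is making this formal sign-function computation rigorous, and it has two faces. The serious one is the vanishing of the drift term: the naive divergence-theorem reading on the level set $\{w>0\}$ needs either a mild regularity statement for the level sets (a coarea/Sard argument) or, equivalently, a direct concentration estimate showing that $\int_{\R^d}p_\epsilon'(\psi)\,w\,\nabla\psi\cdot\nabla V_\lambda\,dx\to 0$ (this is the same term after one integration by parts); here one uses that on $\operatorname{supp}p_\epsilon'(\psi)$ one has $|w|\le\omega(c_\epsilon)\to 0$ by uniform continuity of the inverse of $s\mapsto s^q$ on the range of the bounded solutions. The second, more routine, face is regularity: to justify the integrations by parts and the absence of boundary contributions one first runs the argument on a regularized problem (nondegenerate diffusion, truncated potential) for which solutions are classical, and then removes the regularization using the a priori bounds. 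The $L^1$ well-posedness and comparison framework for \eqref{fd1} that underlies all of this is exactly that of \cite{blanchet2009asymptotics,vazquez2006smoothing}.
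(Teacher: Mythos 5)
The paper does not actually prove this lemma: it is quoted from the literature (Herrero--Pierre, Blanchet--Bonforte--Dolbeault--Grillo--V\'azquez, Bonforte--V\'azquez), and the proofs there are precisely the $L^1$-contraction/Kato-inequality argument whose skeleton you reproduce: set $w=n_1-n_2$, $\psi=n_1^q-n_2^q$, test against a smoothed Heaviside $p_\epsilon(\psi)$, use $p_\epsilon'\ge 0$ to discard the diffusion term, and pass to the limit to get $\frac{d}{dt}\int_{\R^d} w_+\,dx\le 0$. So your overall strategy is the right one and is faithful to the sources the paper cites.

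There is, however, a genuine gap in your treatment of the drift term, and it sits exactly where this equation differs from the plain fast diffusion equation: $\nabla V_\lambda(x)=x|x|^{\lambda-2}$ is unbounded. Your concentration estimate gives $|w|\le\omega(c_\epsilon)\lesssim c_\epsilon$ on $\mathrm{supp}\,p_\epsilon'(\psi)$ (note this already uses $L^\infty$ bounds on $n_1,n_2$, via $|a-b|\le q^{-1}\max(a,b)^{1-q}|a^q-b^q|$, which for merely $L^1$ initial data -- the hypothesis of the lemma -- requires a smoothing effect that is not valid for all $q\in(0,1)$). But even granting boundedness, the factor $\omega(c_\epsilon)$ only cancels the size $\|p_\epsilon'\|_\infty\gtrsim c_\epsilon^{-1}$ of the approximation, leaving
\[
\Bigl|\int_{\R^d} p_\epsilon'(\psi)\,w\,\nabla\psi\cdot\nabla V_\lambda\,dx\Bigr|\ \lesssim\ \int_{\{0<\psi<c_\epsilon\}}|\nabla\psi|\,|\nabla V_\lambda|\,dx,
\]
and to conclude that this tends to $0$ by dominated convergence you need the majorant $|\nabla\psi|\,|\nabla V_\lambda|\in L^1(\R^d)$. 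This is an integrability statement \emph{at infinity}, which interior parabolic regularity does not give, and which can genuinely fail under the lemma's hypotheses: take $n_2\equiv 0$, so that one needs $|\nabla(n_1^q)|\,|x|^{\lambda-1}\in L^1$; in the natural Barenblatt scale $n_1\sim|x|^{-\lambda/(1-q)}$ this forces $q>\frac{N+\lambda-2}{N+2\lambda-2}$, which for $\lambda>2$ is strictly stronger than the standing assumption $q>\frac{N}{N+\lambda}$. The same unboundedness of the drift invalidates the casual discarding of boundary terms at infinity in your integrations by parts, and the divergence-theorem reading on the level set $\{w>0\}$ is not available without regularity of that set. The way the cited references close this is by localization: prove the contraction on balls with cutoff test functions (equivalently, establish T-accretivity of $u\mapsto-\Delta(u^q)-\nabla\cdot(u\nabla V_\lambda)$ in $L^1$ in the B\'enilan--Crandall framework), control the flux errors created by the cutoff using only the $L^1$ bound, and then let the radius go to infinity. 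Your argument becomes a proof once that localization step is supplied; as written, the global integration by parts over $\R^d$ is not justified.
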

The lemma implies 
\begin{lem}
(Comparison principle) For any two non-negative solutions $n_1$ and $n_2$ of equation \eqref{fd1} on $[0, T ), T > 0$, with initial data satisfying $n_{0,1} \le n_{0,2} $ almost everywhere, $n_{0,2} \in L^1_{loc} (\R^d)$, then we have $n_1(t) \le n_2(t)$ for almost every $t \in [0,T)$.
\end{lem}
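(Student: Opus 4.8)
The plan is to obtain the comparison principle as an essentially immediate consequence of the positive-part contraction estimate established in the previous lemma. The first thing I would make explicit is the meaning of the notation appearing there: the quantity written $|n_1-n_2|_+$ is the positive part $(n_1-n_2)_+ = \max(n_1-n_2,0)$. With this reading, the ordering hypothesis $n_{0,1}\le n_{0,2}$ almost everywhere is nothing but the statement
\[
\int_{\R^d}(n_{0,1}-n_{0,2})_+\,dx = 0 .
\]

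From here the deduction is very short. Taking $t_1=0$ and $t_2=t$ in the stronger (positive-part) inequality of the preceding lemma yields
\[
\int_{\R^d}(n_1(t)-n_2(t))_+\,dx \le \int_{\R^d}(n_{0,1}-n_{0,2})_+\,dx = 0 .
\]
Since the integrand on the left is pointwise non-negative, its integral can vanish only if $(n_1(t)-n_2(t))_+=0$ almost everywhere, which is precisely $n_1(t)\le n_2(t)$ a.e. Applying this for a.e.\ $t\in[0,T)$ gives the claim. In this sense the lemma really is a corollary, as the phrasing ``the lemma implies'' already signals.

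The only genuine technical point, and where I expect the main (if modest) obstacle to lie, is the mismatch in integrability hypotheses: the contraction lemma is stated for initial data in $L^1(\R^d)$, whereas here one only assumes $n_{0,2}\in L^1_{\mathrm{loc}}(\R^d)$ (and then $0\le n_{0,1}\le n_{0,2}$ forces $n_{0,1}\in L^1_{\mathrm{loc}}$ as well). To bridge this I would argue by approximation: replace the data by monotone truncations that are genuinely in $L^1(\R^d)$, preserving the ordering $n_{0,1}\le n_{0,2}$, apply the contraction estimate to the corresponding solutions, and then pass to the limit in the displayed integral inequality using monotone convergence and the stability of the positive part under monotone limits. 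In the setting of interest this is automatic, since under \textbf{(H1)} the relevant solutions are squeezed between the integrable stationary profiles $N_{h_2}\le n\le N_{h_1}$ and hence lie in $L^1(\R^d)$ for all $t$. Because the conclusion $n_1\le n_2$ is purely pointwise (equivalently, tested against arbitrary bounded sets), this localization does not interact with the drift term, and the limit passage is routine.
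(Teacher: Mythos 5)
Your proposal is correct and takes exactly the route the paper intends: the paper derives this comparison principle as an immediate consequence of the preceding positive-part contraction lemma (``The lemma implies\ldots''), deferring details to the cited references, and your deduction via $\int_{\R^d}(n_{0,1}-n_{0,2})_+\,dx=0$ is precisely that implication spelled out. Your additional remark on bridging the $L^1$ versus $L^1_{\mathrm{loc}}$ hypotheses by monotone truncation is a sensible way to fill the gap the paper leaves to the references, and it is consistent with the setting where solutions are squeezed between integrable stationary profiles.
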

The proof can be found in \cite{herrero1985cauchy, blanchet2009asymptotics, bonforte2006global}.

\section{External drift type equation: convergence without Rate}
\label{Sec3:fastdiffu2}
In this section we mainly prove convergence without rate, which will allow us to assume that $|h_1 -h_*|$ and $|h_2-h_*|$ is arbitrarily small. 
Define relative entropy by
\begin{equation*}
\mathcal{F}[ n| N_{h_*}]= \ird{\phi(n)-\phi(N_{h_*}) - \phi'(n)(n-N_{h_*})}, \quad \phi(x):= \frac {x^q} {q-1} 
\end{equation*}
define 
\[
w:=\frac {n} {N_{h_*}}
\]
then we have
\begin{equation}
\label{eqtran1}
\begin{aligned}
w_t= \frac 1 {N_{h_*}}n_t &= \frac 1 {N_{h^*}} \nabla(q n ^{q-1} \nabla n +n \nabla V_{\lambda})   \\
&= \frac 1 {N_{h_*}} \nabla(q (N_{h_*} w)^{q-1} \nabla (N_{h_*} w) + N_{h_*} w\nabla V_{\lambda}) \\
& =  \frac 1 {N_{h_*}} \nabla \left[N_{h_*} w \left(q w^{q-2}N_{h_*}^{q-1}\nabla w + q w^{q-1} N_{h_*}^{q-2} \nabla N_{h_*} + \frac q {1-q}\nabla (N_{h_*}^{q-1}) \right) \right]\\
& =  \frac 1 {N_{h_*}} \nabla\left[N_{h_*} w \left(\frac {q} {q-1} N_{h_*}^{q-1}\nabla (w^{q-1}) + \frac q {q-1} w^{q-1} \nabla( N_{h_*}^{q-1}) + \frac q {1-q}\nabla (N_{h_*}^{q-1}) \right)\right]\\
& =  \frac 1 {N_{h_*}} \nabla \left[N_{h_*} w \nabla\left(\frac {q} {q-1} N_{h_*}^{q-1}( w^{q-1} -1)\right) \right]
\end{aligned}
\end{equation}
recall that in the third line we use that
\[
\nabla V_{\lambda} = \frac q {1-q}\nabla (N_{h_*}^{q-1})
\]
by homogeneity of $\phi$ the relative entropy we rewrite that
\[
\mathcal{F}[ n| N_{h_*}]= \ird{\phi(w)-\phi(1) - \phi'(1)(w-1)},  \quad w =\frac {n} {N_{h_*}}
\]
so we define the relative entropy
\begin{equation*}
\mathcal{F}[w] = \frac {1} {1-q}  \ird{ \left[ (w-1) - \frac 1 q (w^q-1) \right] N_{h_*}^q }
\end{equation*}
and the relative Fisher information
\[
\mathcal{I}[w] = \frac {q} {(1-q)^2}  \ird{ \left| \nabla \left[ (w^{q-1}-1)N_{h_*}^{q-1} \right]  \right|^2 wN_{h_*} }
\]
it's easily seen that
\[
\mathcal{F}[w] =\frac 1 q \mathcal{F}[ n| N_{h_*}],\quad \mathcal{I}[w] =\frac 1 q \mathcal{I}[ n| N_{h_*}]
\]
and
\[
\frac d {dt} \mathcal{F} [w(t)]=-\mathcal{I} [w(t)]
\]
we omit the regularity here, see Proposition 2 in \cite{blanchet2009asymptotics} for more details. Define 
\[
W_0:=\inf_{x \in \R^N} \frac {N_{h_2}(x)} {N_{h_*}(x)} = \left(\frac {h_*} {h_2}\right)^{\frac{1} {1-q}}<1, \quad W_1:=\inf_{x \in \R^N} \frac {N_{h_1 }(x)} {N_{h_*}(x)} = \left(\frac {h_*} {h_1}\right)^{\frac{1} {1-q}}>1
\]
with such notations, we can rewrite the assumptions as follows:\\
\textbf{(H1')}$n_0$ is a non-negative function in $L^1_{loc}(\R^d)$ and there exist positive constants $h_1<h_2$ such that
\[
0<W_0 \le \frac {N_{h_2}(x)} {N_{h_*}(x)}    \le w(x) \le \frac {N_{h_1}(x)} {N_{h_*}(x)} \le W_1 \le \infty,\quad \forall x \in \R^d
\]
\begin{lem}
(Uniform $C^k$ regularity) Let $q \in(0, 1 )$ and $w \in L^\infty_{loc} ((0,T) \times \R^d )$ be a solution of the nonlinear equation. Then for any $k \in \mathbb N$ and $t_0 \in (0,T)$,
\[
\sup_{t \ge t_0 }\Vert w(t) \Vert_{C^k(\R^d)} <+\infty.
\]
\end{lem}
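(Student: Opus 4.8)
The plan is to exploit the two-sided bound $0 < W_0 \le w \le W_1 < \infty$ coming from \textbf{(H1')} and the comparison principle, which is exactly what converts the degenerate fast-diffusion structure into a genuinely parabolic problem for $w$. Writing \eqref{eqtran1} in expanded non-divergence form and using $\frac{q}{q-1}N_{h_*}^{q-1} = -(h_* + V_\lambda)$, the principal part of the equation is $(1-q)(h_*+V_\lambda)\,w^{q-1}\Delta w$ together with first-order terms; since $w^{q-1}$ lies between $W_1^{q-1}$ and $W_0^{q-1}$, both positive and finite, the diffusion coefficient is strictly positive and bounded on every compact set. Thus on any parabolic cylinder $Q = B_R(x_0)\times(t-s,\,t)$ with $t-s \ge t_0/2 > 0$ the equation is uniformly parabolic with bounded, measurable coefficients.

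First I would run the standard interior bootstrap. De Giorgi--Nash--Moser theory gives an interior Hölder bound on $\|w\|_{C^{\alpha,\alpha/2}}$ depending only on the ellipticity constants and the $L^\infty$ bound, hence independent of $(t,x_0)$; once the coefficients are Hölder, local Schauder estimates upgrade this to $C^{2+\alpha,\,1+\alpha/2}$; differentiating the equation and iterating yields interior $C^k$ bounds for every $k$, to the extent the coefficients, built from $V_\lambda = \frac1\lambda|x|^\lambda$, are smooth, which holds away from the origin and globally when $V_\lambda\in C^\infty$. Uniformity in time is then immediate: the coefficients are autonomous, depending on $x$ only through $N_{h_*}$ and $V_\lambda$, so translating a cylinder in the $t$-direction leaves all interior constants unchanged, and the uniform bound $w\in[W_0,W_1]$ supplies the needed $L^\infty$ control on every backward time window as long as we stay at times $\ge t_0$.

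The hard part will be uniformity as $|x|\to\infty$, because the diffusion coefficient $(h_*+V_\lambda)w^{q-1}$ blows up like $|x|^\lambda$, so the equation fails to be uniformly parabolic on all of $\R^d$ at a single scale. To handle this I would use a dyadic rescaling: on each annulus $A_j = \{2^j\le |x|\le 2^{j+1}\}$ set $x = 2^j y$ and use the natural parabolic time rescaling $t = 2^{-j(\lambda-2)}\tau$, under which $V_\lambda(2^j y) = 2^{j\lambda}V_\lambda(y)$ and the equation for $\tilde w(\tau,y) := w(t,x)$ becomes uniformly parabolic on $\{1\le|y|\le 2\}$ with coefficients bounded independently of $j$; here one uses that $h_* + V_\lambda \approx V_\lambda$ for $|x|$ large, so the constant $h_*$ contributes only a uniformly small correction. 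Applying the interior estimates already obtained to $\tilde w$ gives $C^k$ bounds uniform in $j$, and scaling back via $\partial_x^\beta w = 2^{-j|\beta|}\partial_y^\beta\tilde w$ shows the spatial derivatives of $w$ are not merely uniformly bounded but in fact decay as $|x|\to\infty$. Combining the core region $|x|\le 1$, handled directly since there the equation is uniformly parabolic, with the annuli and the uniform-in-time statement yields $\sup_{t\ge t_0}\|w(t)\|_{C^k(\R^d)}<\infty$; this rescaling-and-bootstrap scheme is the one carried out in \cite{blanchet2009asymptotics}.
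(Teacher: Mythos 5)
Your proposal is correct and, in substance, it coincides with the paper's proof, which consists entirely of the citation ``See \cite{blanchet2009asymptotics}, Theorem 4'': the rescaling-and-bootstrap scheme you describe is exactly the argument behind that theorem, and you identify it as such. Two remarks are worth making. First, the cited theorem literally concerns the Barenblatt case $\lambda=2$, where the diffusion coefficient grows like $|x|^{2}$ and the dyadic spatial rescaling needs no accompanying time change; your anisotropic rescaling $x=2^{j}y$, $t=2^{-j(\lambda-2)}\tau$ is precisely the adaptation needed for general $\lambda$, and it does keep all coefficients uniformly bounded on the unit annulus (principal part $\sim(1-q)V_\lambda w^{q-1}$, drift $\sim|x|^{\lambda-1}$ picking up $2^{-j(\lambda-2)}2^{j(\lambda-1)}2^{-j}=1$, zeroth-order $\sim|x|^{\lambda-2}$ likewise), so on this point your sketch supplies something the paper's bare citation does not. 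Second, the one step that needs patching is the claim that the interior estimates for $\tilde w$ are ``uniform in $j$'': for $\lambda\in(0,2)$ — a range included in Theorem~\ref{thm1.1}, so the lemma is needed there too — the rescaled time variable is compressed, and the backward window available before $\tau_1=2^{-j(2-\lambda)}t$ has depth only $\delta_j=2^{-j(2-\lambda)}t_0\to0$, so the interior bounds on $\tilde w$ degrade like $\delta_j^{-|\beta|/2}$ and are \emph{not} uniform in $j$; they are rescued only after converting back to the original variables, since $|\partial_x^\beta w|\lesssim 2^{-j|\beta|}\,\delta_j^{-|\beta|/2}\,\|w\|_{L^\infty}= 2^{-j\lambda|\beta|/2}\,t_0^{-|\beta|/2}\,\|w\|_{L^\infty}$, which is still bounded uniformly in $j$. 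For $\lambda\ge2$ your argument is complete as written, and your caveat about the limited smoothness of $|x|^\lambda$ at the origin for non-even $\lambda$ is genuine but harmless in context, since the paper only ever uses finitely many derivatives of $w$.
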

\begin{proof}
See \cite {blanchet2009asymptotics} Theorem 4.
\end{proof}

\begin{lem}\label{lem3.2}
If $q \in (0,1)$, $w$ satisfies \textbf{(H1')} above, then we have
\[
\frac 1 2 W_1^{q-2} \ird{|w-1|^2 N_{h_*}^q}  \le \mathcal{F} [w]  \le \frac 1 2 W_0^{q-2} \ird{ |w-1|^2 N_{h_*}^q}
\]
\end{lem}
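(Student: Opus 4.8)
The plan is to reduce the claimed two-sided bound to a pointwise inequality for the integrand and then integrate against the nonnegative weight $N_{h_*}^q$. Writing $\psi(w):=\frac{1}{1-q}\big[(w-1)-\frac1q(w^q-1)\big]$, so that $\mathcal F[w]=\ird{\psi(w)\,N_{h_*}^q}$, the whole statement follows once I establish the pointwise estimate
\[
\tfrac12\,W_1^{q-2}(w-1)^2\le \psi(w)\le \tfrac12\,W_0^{q-2}(w-1)^2
\]
for every value $w\in[W_0,W_1]$.

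First I would record the elementary derivatives: $\psi(1)=0$, $\psi'(w)=\frac{1}{1-q}(1-w^{q-1})$ so that $\psi'(1)=0$, and $\psi''(w)=w^{q-2}$. A second-order Taylor expansion of $\psi$ at $w=1$ with Lagrange remainder then produces, for each $w$, an intermediate point $\xi=\xi(w)$ between $1$ and $w$ with
\[
\psi(w)=\tfrac12\,\psi''(\xi)\,(w-1)^2=\tfrac12\,\xi^{q-2}(w-1)^2.
\]

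Next I would exploit the sign of $q-2$. Since $q\in(0,1)$ we have $q-2<0$, so $\xi\mapsto\xi^{q-2}$ is strictly decreasing on $(0,\infty)$. By \textbf{(H1')} we have $w\in[W_0,W_1]$ with $W_0<1<W_1$; hence the intermediate point $\xi$, which lies between $1$ and $w$, also satisfies $\xi\in[W_0,W_1]$. Monotonicity therefore yields $W_1^{q-2}\le\xi^{q-2}\le W_0^{q-2}$, which combined with the Taylor identity is precisely the pointwise bound above. Multiplying through by the nonnegative weight $N_{h_*}^q$ and integrating over $\R^N$ gives the Lemma.

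There is no serious obstacle here: the argument is simply a uniform convexity estimate for the relative entropy density. The only point requiring care is bookkeeping the direction of the inequalities, which hinges on $q-2<0$ \emph{reversing} the monotonicity of $\xi^{q-2}$ (so that the upper bound $W_1$ on $\xi$ produces the lower constant $W_1^{q-2}$, and the lower bound $W_0$ produces the upper constant $W_0^{q-2}$), together with the observation that the mean-value point $\xi$ cannot leave $[W_0,W_1]$, which is guaranteed by $1\in(W_0,W_1)$. An equivalent route avoiding the unspecified intermediate point is to use the integral form of the remainder, $\psi(w)=\int_1^w(w-s)\,s^{q-2}\,ds$, and to bound $s^{q-2}$ between $W_1^{q-2}$ and $W_0^{q-2}$ over the range of integration, using $\int_1^w(w-s)\,ds=\tfrac12(w-1)^2$; this makes the constants completely explicit and sidesteps any appeal to the mean value theorem.
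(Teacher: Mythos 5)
Your proof is correct and is essentially the same argument as the paper's: both reduce the lemma to a pointwise two-sided bound on the entropy density using $\psi''(w)=w^{q-2}$ together with the monotonicity of $s\mapsto s^{q-2}$ on $[W_0,W_1]$. The paper packages this as convexity/concavity of $\phi_a(w)=\psi(w)-a(w-1)^2$ with $a=W_1^{q-2}/2$ or $W_0^{q-2}/2$, while you invoke Taylor's theorem with Lagrange (or integral) remainder at $w=1$; the two formulations are interchangeable.
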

\begin{proof}
For some $a>0$ to be fixed later, we define
\[
\phi_a(w):= \frac {1} {1-q}  \left[ (w-1) - \frac 1 q (w^q-1) \right] -a(w-1)^2
\]
we compute
\[
\phi_a(w) = \frac 1 {q-1}[1-w^{q-1}]-2a(w-1), \quad \phi''_a(w)=w^{q-2}-2a.
\]
Note here $\phi_a(1)=\phi'_a(1)=0 $, recall that $0<W_0<1<W_1 $ and $w \in [W_0, W_1]$. So let $a = W_1^{q-2}/2$, then 
\[
\phi''_a(w)>0, \quad w\in (W_0, W_1),
\] 
which implies
\[
\phi_a(w) \ge 0, \quad w\in (W_0, W_1),
\] 
so the lower bound is proved after multiplying $N_{h_*}^q$ and integrating over $\R^d$. Similarly taking $a = W_0^{q-2}/2$ we can prove the upper bound.
\end{proof}
\begin{cor}
If $w_0$ satisfies \textbf{(H1')}, then the free energy $\mathcal{F}[w(t)]$ is finite for all $t\ge 0$.
\end{cor}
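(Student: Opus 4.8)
The plan is to reduce the claim to the upper bound furnished by Lemma~\ref{lem3.2}, combined with the integrability of $N_{h_*}^q$, which is exactly where the standing hypothesis $q>\frac{N}{N+\lambda}$ will be used.

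First I would propagate the pointwise bounds in \textbf{(H1')} from the initial time to all later times. Since $N_{h_1}$ and $N_{h_2}$ are themselves stationary solutions of \eqref{fd1} and \textbf{(H1')} gives $N_{h_2}\le n_0\le N_{h_1}$, the comparison principle (equivalently, the maximum principle already recorded in the introduction) yields $N_{h_2}\le n(t,\cdot)\le N_{h_1}$ for every $t>0$. Dividing by $N_{h_*}$ shows that $w(t)=n(t,\cdot)/N_{h_*}$ again satisfies \textbf{(H1')}, so in particular $W_0\le w(t,x)\le W_1$ pointwise for all $t\ge 0$.

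With these bounds in hand, Lemma~\ref{lem3.2} applies at each fixed $t$ and gives
\[
\mathcal F[w(t)]\le \tfrac12 W_0^{q-2}\ird{|w(t)-1|^2 N_{h_*}^q}.
\]
Because $W_0\le w(t,x)\le W_1$, the factor $|w(t)-1|^2$ is dominated by the constant $C:=\max\{(1-W_0)^2,(W_1-1)^2\}$ uniformly in $x$ and $t$, so it remains only to verify $\ird{N_{h_*}^q}<\infty$. This is the single quantitative point requiring care: from $N_{h_*}=\bigl(\tfrac{1-q}{q}(h_*+V_\lambda)\bigr)^{1/(q-1)}$ one has $N_{h_*}^q\sim |x|^{q\lambda/(q-1)}$ as $|x|\to\infty$, and the decay exponent satisfies $q\lambda/(q-1)<-N$ precisely when $q(N+\lambda)>N$; since $N_{h_*}^q$ is bounded near the origin there is no issue there. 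Hence $\ird{N_{h_*}^q}<\infty$ and $\mathcal F[w(t)]\le \tfrac12 C\,W_0^{q-2}\ird{N_{h_*}^q}<\infty$ for every $t\ge 0$.

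I do not expect a genuine obstacle here: the only delicate step is the borderline integrability of $N_{h_*}^q$ at infinity, whose critical decay rate is matched exactly by the diffusion-dominated condition $q>\frac{N}{N+\lambda}$. As an alternative that avoids reapplying Lemma~\ref{lem3.2} at every time, one may establish finiteness only at $t=0$ and then invoke the entropy dissipation $\frac{d}{dt}\mathcal F[w(t)]=-\mathcal I[w(t)]\le 0$ to conclude $\mathcal F[w(t)]\le \mathcal F[w_0]<\infty$ for all $t\ge 0$.
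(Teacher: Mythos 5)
Your proof is correct under the paper's standing assumption $q>\frac{N}{N+\lambda}$, and it shares the paper's skeleton: propagate the pointwise bounds $W_0\le w(t,\cdot)\le W_1$ by comparison, then invoke the upper bound of Lemma~\ref{lem3.2}. But the final integrability step is genuinely different. You discard all decay of $w-1$, bounding $|w(t)-1|^2$ by the constant $\max\{(1-W_0)^2,(W_1-1)^2\}$ and relying on $\ird{N_{h_*}^q}<\infty$, which holds exactly when $q>\frac{N}{N+\lambda}$. The paper instead keeps the pointwise information from the sandwich $N_{h_2}\le n\le N_{h_1}$: it writes $|w-1|\le |N_{h_2}-N_{h_1}|\,N_{h_*}^{-1}$ and uses the decay estimate $|N_{h_2}-N_{h_1}|\le C_*(1+|x|)^{-\frac{\lambda(2-q)}{1-q}}$, so that the integrand $|w-1|^2N_{h_*}^q$ is controlled by a multiple of $(1+|x|)^{-\frac{\lambda(2-q)}{1-q}}$, integrable as soon as $\lambda\,\frac{2-q}{1-q}>N$. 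That condition is strictly weaker than $q>\frac{N}{N+\lambda}$ (at $q=\frac{N}{N+\lambda}$ one gets $\lambda\frac{2-q}{1-q}=N+2\lambda$, and when $2\lambda\ge N$ it holds for every $q\in(0,1)$). The extra generality is not cosmetic: the neighboring results of Section~\ref{Sec3:fastdiffu2} that consume this corollary are stated for $q\in(0,1)$, and the Remark following Theorem~\ref{thm1.1} advertises an extension below $\frac{N}{N+\lambda}$; your argument cannot serve that range, since $N_{h_*}^q\notin L^1(\R^N)$ there, whereas the paper's argument survives in (much of) it. Within the regime $q>\frac{N}{N+\lambda}$, though, your proof is complete and simpler, and it isolates cleanly where the diffusion-dominated condition enters. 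Finally, your alternative closing step --- finiteness at $t=0$ plus $\frac{d}{dt}\mathcal{F}[w]=-\mathcal{I}[w]\le 0$ --- is exactly what the paper's displayed chain $\mathcal{F}[w]\le\mathcal{F}[w_0]\le\cdots$ uses implicitly, so that part matches the paper.
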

\begin{proof}
By Lemma~\ref{lem3.2}, we have
\[
\frac 2 {W_0^{q-2}} \mathcal{F}[w] \le \frac 2 {W_0^{q-2}} \mathcal{F}[w_0] \le \ird{|w-1|^2 N_{h_*}^ {q-2} } \le \ird{|w-1| |N_{h_2} -N_{h_1}| N_{h_*}^ {q-2}}
\]
and it is easily seen that 
\[
|N_{h_2} -N_{h_1}|  \le C_*(1+|x|)^{-\frac {\lambda(2-q)} {1-q}}
\]
for some constant $C_*>0$. So the proof is concluded since $w-1$ is integrable and $|N_{h_2} -N_{h_1}| N_{h_*}^ {q-2}$ is bounded.
\end{proof}
\begin{lem}
Let $q \in (0, 1)$. If $w$ is a solution and $w_0$ satisfying \textbf{(H1')}, then
\[
\lim_{t \to \infty}  w(t, x ) = 1, \quad \forall x \in \R^d.
\]
\end{lem}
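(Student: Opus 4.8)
\textit{Proof plan.} The plan is to run the standard entropy-dissipation plus compactness scheme and only upgrade the resulting weak convergence to the pointwise statement at the very end. First I would use the dissipation identity $\frac{d}{dt}\mathcal F[w(t)]=-\mathcal I[w(t)]\le 0$ together with Lemma~\ref{lem3.2} (which gives $\mathcal F[w]\ge 0$): thus $t\mapsto\mathcal F[w(t)]$ is nonincreasing and bounded below, hence converges to some $\mathcal F_\infty\ge 0$ as $t\to\infty$. Integrating the identity on $(0,\infty)$ yields $\int_0^\infty\mathcal I[w(t)]\,dt=\mathcal F[w_0]-\mathcal F_\infty<\infty$, so there is a sequence $t_n\to\infty$ along which $\mathcal I[w(t_n)]\to 0$.

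Next I would extract a limit profile. By the uniform $C^k$ regularity lemma and the bounds $W_0\le w\le W_1$, the family $\{w(t_n)\}$ is bounded in $C^k(\R^d)$ for every $k$, so Arzel\`a--Ascoli provides a subsequence (not relabelled) with $w(t_n)\to w_\infty$ in $C^k_{loc}(\R^d)$, where $W_0\le w_\infty\le W_1$. Since the integrand of $\mathcal I$ is nonnegative and converges locally, Fatou's lemma gives $\mathcal I[w_\infty]\le\liminf_n\mathcal I[w(t_n)]=0$. As $w_\infty\ge W_0>0$ and $N_{h_*}>0$, this forces $\nabla\big[(w_\infty^{q-1}-1)N_{h_*}^{q-1}\big]=0$, i.e. $(w_\infty^{q-1}-1)N_{h_*}^{q-1}\equiv c$ is constant. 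Multiplying out and using $N_{h_*}^{q-1}=\frac{1-q}{q}(h_*+V_\lambda)$ gives $(w_\infty N_{h_*})^{q-1}=N_{h_*}^{q-1}+c=\frac{1-q}{q}(h+V_\lambda)$ with $h:=h_*+\frac{q}{1-q}c$; that is, $w_\infty N_{h_*}=N_h$ is a stationary profile.

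To pin down $h=h_*$ I would invoke mass conservation. Since $N_{h_2}\le w(t_n)N_{h_*}\le N_{h_1}$ with $N_{h_1},N_{h_2}\in L^1(\R^d)$, dominated convergence yields $\ird{w_\infty N_{h_*}}=\lim_n\ird{w(t_n)N_{h_*}}=m=\ird{N_{h_*}}$. Because the mass of $N_h$ is strictly monotone in $h$, this forces $h=h_*$ and hence $w_\infty\equiv 1$. In particular $\mathcal F_\infty=\mathcal F[1]=0$ (continuity of $\mathcal F$ along the convergence again following from dominated convergence with the integrable majorant), so by the lower bound in Lemma~\ref{lem3.2} we obtain $\ird{|w(t)-1|^2 N_{h_*}^q}\le \tfrac{2}{W_1^{q-2}}\,\mathcal F[w(t)]\to 0$ as $t\to\infty$ along the full flow.

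Finally I would upgrade this weighted $L^2$ convergence to the claimed pointwise statement. On any compact $K$ the weight $N_{h_*}^q$ is bounded below by a positive constant, so $w(t)\to 1$ in $L^2(K)$. If $w(t)$ failed to converge to $1$ uniformly on some $K$, then by the uniform $C^1$ bound and Arzel\`a--Ascoli a sequence of times would have a subsequence converging uniformly on $K$ to a limit distinct from $1$, contradicting the $L^2(K)$ convergence; hence $w(t)\to 1$ locally uniformly, and in particular $\lim_{t\to\infty}w(t,x)=1$ for every $x\in\R^d$. The main obstacle I expect lies in the second and third steps: rigorously justifying $\mathcal I[w_\infty]=0$, identifying the vanishing-Fisher-information profile as exactly a Barenblatt-type equilibrium $N_h$, and passing mass conservation to the limit, which is precisely what selects $h=h_*$ among the one-parameter family of stationary states.
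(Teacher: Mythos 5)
Your proposal is correct and follows essentially the same route as the paper: entropy dissipation combined with uniform $C^k$ regularity and Arzel\`a--Ascoli to extract a subsequential limit, Fatou's lemma to show the limit has vanishing relative Fisher information and is therefore a stationary profile $N_h$, and conservation of mass to force $h=h_*$, i.e. $w_\infty\equiv 1$. Your endgame---using monotonicity of $\mathcal F$ together with the lower bound of Lemma~\ref{lem3.2} to pass from the subsequential limit to convergence of the full flow, and then upgrading weighted $L^2$ convergence to pointwise convergence via the uniform $C^1$ bounds---is simply a more detailed justification of the paper's brief closing remark that ``since the limit is unique, the whole $w(t)$ converges to $1$.''
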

\begin{proof}
Let $w_\tau(t, x)=w(t+\tau, x)$. Since the functions are uniformly $C^1$ continuous, we have there exists a sequence $\tau_n \to \infty$ such that $w_{\tau_n}$ converges to a function $w_{\infty}$ uniformly in every compact set.  By interior regularity of the solutions, the derivatives also converge everywhere. Since $w(x) \ge W_0$ we have $w_\infty \ge W_0>0$.\\
Since $\mathcal{F}[w]$ is finite, we compute
\[
\mathcal{F}[w(\tau_n)] - \mathcal{F}[w(\tau_n+1)]  = \int_{0}^1\mathcal{I}[w(t + \tau_n)] dt
\]
by Fatou's Lemma we have
\[
\int_{0}^1\mathcal{I}[w_\infty] dt \le \lim_{n \to \infty} \int_{0}^1\mathcal{I}[w(t + \tau_n)] dt =  \lim_{n \to \infty} \mathcal{F}[w(\tau_n)] - \mathcal{F}[w(\tau_n+1)]=0
\]
which is 
\[
\int_0^1 \ird{ \left|\nabla\left[(w_{\infty}^{p-1}(t, x) -1 ) N_{h_*}^{q-1}(x)  \right]       \right|^2 w_{\infty}(t, x)  N_{h_*}(x)   }dt =0
\]
since $w_\infty \ge W_0$, this implies that $w_\infty $ is constant.  By the conservation of mass, we deduce $w_\infty =1$. Since the limit is unique, the whole $w(t)$ converges to $1$ as $t \to \infty$.
\end{proof}
\begin{cor}
Let $q \in (0, 1)$. If $w$ is a solution of \eqref{eqtran1} and $w_0$ satisfying \textbf{ (H1')}, then
\[
\lim_{t \to \infty}  \Vert w(t) -1 \Vert_{L^\infty}=0.
\]
\end{cor}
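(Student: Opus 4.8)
The plan is to upgrade the pointwise convergence $w(t,x)\to 1$ obtained in the previous lemma to convergence in $L^\infty$ by splitting $\R^d$ into a compact core and a far-field tail. The starting point is the observation that the squeeze in \textbf{(H1')} is propagated by the flow: by the comparison principle we have $N_{h_2}\le n(t,\cdot)\le N_{h_1}$ for every $t>0$, and dividing by $N_{h_*}$ gives
\[
\frac{N_{h_2}(x)}{N_{h_*}(x)}\le w(t,x)\le \frac{N_{h_1}(x)}{N_{h_*}(x)},\qquad \forall\,t>0,\ x\in\R^d .
\]
The two barriers on the left and right are independent of $t$, which is what makes the tail estimate uniform in time.

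First I would analyze these barriers at spatial infinity. Writing $N_{h_i}/N_{h_*}=\bigl((h_i+V_{\lambda})/(h_*+V_{\lambda})\bigr)^{1/(q-1)}$ and using $V_{\lambda}(x)\to\infty$ as $|x|\to\infty$, the inner ratio tends to $1$, so both $N_{h_1}/N_{h_*}$ and $N_{h_2}/N_{h_*}$ converge to $1$ as $|x|\to\infty$ (the negative exponent $1/(q-1)$ is harmless). Hence, given $\varepsilon>0$, there exists $R>0$ such that $|w(t,x)-1|<\varepsilon$ for \emph{all} $t>0$ whenever $|x|\ge R$; the tail is controlled with no reference to $t$.

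On the ball $\{|x|\le R\}$ I would instead invoke the uniform $C^1$ regularity: the family $\{w(t,\cdot)\}_{t\ge t_0}$ has a $t$-independent bound on $\|w(t)\|_{C^1(\R^d)}$, hence is equicontinuous. Equicontinuity together with the pointwise convergence $w(t,x)\to 1$ yields, by a standard Arzel\`a--Ascoli / Dini argument, uniform convergence on the compact set $\{|x|\le R\}$: there is $T$ such that $\sup_{|x|\le R}|w(t,x)-1|<\varepsilon$ for all $t>T$. Combining the compact estimate with the tail estimate gives $\Vert w(t)-1\Vert_{L^\infty}<\varepsilon$ for $t>T$, which is the assertion.

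The main obstacle is the behavior at infinity. Equicontinuity alone only controls compact sets, and a priori one could fear that $w-1$ fails to decay uniformly in the far field even though it converges pointwise. This is precisely where the propagated \textbf{(H1')} squeeze is indispensable: it traps $w(t,\cdot)$, uniformly in $t$, between two stationary profiles that both tend to $1$ at infinity, thereby ruling out any loss of uniformity in the tail and reducing the whole problem to the compact estimate handled by equicontinuity.
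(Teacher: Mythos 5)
Your proof is correct, but it follows a genuinely different route from the paper's. Both arguments rest on the same three ingredients --- the comparison principle ($N_{h_2}\le n(t,\cdot)\le N_{h_1}$ for all $t$), the uniform $C^1$ bound, and the pointwise convergence $w(t,x)\to 1$ --- but assemble them differently. The paper never splits space into core and tail: it bounds $|w(t,x)-1|\le |N_{h_1}-N_{h_2}|\cdot N_{h_*}^{-1}$, a fixed time-independent function lying in $L^p$ for $p$ large, applies dominated convergence to get $\lim_{t\to\infty}\Vert w(t)-1\Vert_{L^p}=0$, and then upgrades to $L^\infty$ via the interpolation inequality $\Vert f\Vert_{L^\infty}\le \Vert f\Vert_{C^1}^{\theta}\Vert f\Vert_{L^p}^{1-\theta}$ with $\theta=\frac{p}{p+N}$, combined with the uniform $C^1$ bound. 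You instead use the squeeze only qualitatively --- both barriers $N_{h_i}/N_{h_*}=\left((h_i+V_\lambda)/(h_*+V_\lambda)\right)^{1/(q-1)}$ tend to $1$ at infinity, uniformly in $t$ --- to control the tail, and then obtain uniform convergence on the remaining compact set from equicontinuity plus pointwise convergence (this is the finite-cover/Arzel\`a--Ascoli argument; Dini's theorem proper would need monotonicity in $t$, which you do not have, but you also do not need it). Your route is more elementary and somewhat more robust: it needs no interpolation inequality and no quantitative decay rate for the dominating function, only that the barriers merge at infinity (incidentally, the paper's stated decay exponent $-\lambda/(1-q)$ is a harmless slip --- the product $|N_{h_1}-N_{h_2}|\,N_{h_*}^{-1}$ actually decays like $|x|^{-\lambda}$, which is still in $L^p$ for $p>N/\lambda$, so its argument is unaffected). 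The paper's route, given the cited inequality, makes for a shorter write-up and yields the $L^p$ convergence of $w(t)-1$ as a byproduct.
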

\begin{proof}
First we compute
\[
|w(t, x) - 1| = |n(t, x) - N_{h_*}|\cdot |N_{h_*}^{-1} | \le |N_{h_1} -N_{h_0}|  \cdot|N_{h_*}^{-1} | \le \mathcal C(1+|x|)^{-\frac {\lambda} {1-q}} \in L^p,
\]
for some constant $\mathcal C>0$ and some $p$ large. By dominated convergence theorem, we have
\[
\lim_{t \to \infty}  \Vert w(t) -1 \Vert_{L^p}=0.
\]
by the inequality in \cite{catrina2001caffarelli, nirenberg1959on}
\[
\Vert f \Vert_{L^\infty} \le \Vert f \Vert_{C^1}^{q}  \Vert f \Vert_{L^p}^{1-q}
\]
for $q=\frac {p} {p+N}$, we deduce the result.
\end{proof}

\section{External drift type equation: convergence with rate}
\label{Sec4:fastdiffu3}
In this section we prove the convergence with rate around steady state, together with the convergence without rate in the former section we are able to give the convergence rate for the equation \eqref{fd1}. The fact that $w(t)$ converges uniformly to $1$ as $t \to \infty$ allows us to improve the lower and upper bounds $W_0$ and $W_1$ for the function w(t), at the price of waiting some time. For any $\epsilon> 0$ there exists a time $t_0 = t_0(\epsilon)$ such that
\[
1-\epsilon \le w(t, x) \le 1+\epsilon, \quad \forall(t, x) \in (t_0, \infty) \times \R^N.
\]
define the function
\begin{equation}
\label{weight2}
\mathcal M_1(x) := \Delta(N_{h_*}^{q-1}) \mathcal M +\nabla  \mathcal M  \cdot \nabla(N_{h_*}^{q-1})
\end{equation}
remind that
\[
\frac{q}{1-q}N_{h_*}^{q-1}(x)=h_*+V_{\lambda}(x)
\]
so
\begin{equation}
\label{weight5}
\mathcal M_1(x):=\left\{
\begin{aligned}
  & \frac {1-q}{q}\cdot\frac{N|x|^{2\lambda-4}+(\lambda+N-2)|x|^{\lambda-2}}{(1+|x|^{\lambda-2})^2} , &\quad \lambda \ne 2 \\
 & \frac{N(1-q)}{q}, &\quad \lambda =2
\end{aligned}
\right.
\end{equation}
it is easily seen that for any $x\in\mathbb R^N$,
\[
|\mathcal M_1(x)| \le \frac{1-q}{q}\mbox{max}\{N,\lambda+N-2\} ,
\]
\begin{lem}\label{lem4.1}
For any smooth function $\alpha(x)$ and the functions $\mathcal M(x), \mathcal M_1(x)$ defined above, we have
\begin{equation*}
\begin{aligned}
&\ird{|\nabla(\alpha(w) N_{h_*}^{q-1})|^2N_{h_*} \mathcal M    }\\
=&\ird{|\alpha'(w)|^2|\nabla w|^2N_{h_*}^{2q-1}\mathcal M   }+ \frac {1} {1-q}\ird{\alpha^2(w) \nabla|  (N_{h_*}^{q-1})|^2N_{h_*} \mathcal M  }-\ird{ \alpha^2(w)  {N_{h_*}^{q} }\mathcal M  _1 }.
\end{aligned}
\end{equation*}
\end{lem}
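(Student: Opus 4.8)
The plan is to expand the square on the left by the product rule and to show that, after one integration by parts, the cross term reproduces exactly the remaining two terms on the right. Throughout write $P := N_{h_*}^{q-1}$, so that $\nabla\big(\alpha(w)\,P\big) = P\,\alpha'(w)\,\nabla w + \alpha(w)\,\nabla P$. Squaring, multiplying by the weight $N_{h_*}\mathcal M$ and integrating splits the left-hand side into three pieces,
\[
\int_{\R^N} P^2|\alpha'(w)|^2|\nabla w|^2 N_{h_*}\mathcal M\,dx
+\int_{\R^N} 2P\,\alpha(w)\,\nabla\alpha(w)\cdot\nabla P\,N_{h_*}\mathcal M\,dx
+\int_{\R^N}\alpha^2(w)|\nabla P|^2 N_{h_*}\mathcal M\,dx .
\]
Since $P^2 N_{h_*} = N_{h_*}^{2q-1}$, the first integral is already the first term on the right; the third integral is $\int_{\R^N}\alpha^2(w)|\nabla(N_{h_*}^{q-1})|^2 N_{h_*}\mathcal M\,dx$, which I will later combine with part of the cross term.

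The heart of the argument is the cross term. Using $2\alpha(w)\nabla\alpha(w) = \nabla(\alpha^2(w))$ and $P\,N_{h_*} = N_{h_*}^{q}$, it equals $\int_{\R^N} N_{h_*}^{q}\,\mathcal M\,\nabla(\alpha^2(w))\cdot\nabla P\,dx$, and an integration by parts gives
\[
\int_{\R^N} N_{h_*}^{q}\,\mathcal M\,\nabla(\alpha^2(w))\cdot\nabla P\,dx
=-\int_{\R^N}\alpha^2(w)\,\nabla\!\cdot\!\big(N_{h_*}^{q}\,\mathcal M\,\nabla P\big)\,dx .
\]
To expand the divergence I would use the identity $\nabla(N_{h_*}^{q}) = \tfrac{q}{q-1}\,N_{h_*}\,\nabla P$, which follows directly from $P = N_{h_*}^{q-1}$. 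This turns $\nabla(N_{h_*}^{q}\mathcal M)\cdot\nabla P + N_{h_*}^{q}\mathcal M\,\Delta P$ into $\tfrac{q}{q-1}N_{h_*}\mathcal M|\nabla P|^2 + N_{h_*}^{q}\big(\nabla\mathcal M\cdot\nabla P + \mathcal M\,\Delta P\big)$.

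It now suffices to recognize, from the definition \eqref{weight2} of $\mathcal M_1$, that $\nabla\mathcal M\cdot\nabla P + \mathcal M\,\Delta P = \mathcal M_1$, so that the cross term equals $\tfrac{q}{1-q}\int_{\R^N}\alpha^2(w)|\nabla P|^2 N_{h_*}\mathcal M\,dx - \int_{\R^N}\alpha^2(w)N_{h_*}^{q}\mathcal M_1\,dx$ (using $-\tfrac{q}{q-1} = \tfrac{q}{1-q}$). Adding the third piece above and using $\tfrac{q}{1-q}+1 = \tfrac{1}{1-q}$ recombines precisely into the second and third terms of the claimed identity. The only delicate point is the vanishing of the boundary term in the integration by parts; this I would justify from the polynomial decay of $N_{h_*}$ at infinity (hence of $N_{h_*}^{q}\nabla P$) together with the boundedness of $\mathcal M$ and the assumed control on $\alpha(w)$. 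Everything else is a routine bookkeeping of the product and chain rules.
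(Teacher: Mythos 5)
Your proposal is correct and follows essentially the same route as the paper's own proof: expand the square, rewrite the cross term as $\ird{\nabla(\alpha^2(w))\cdot\nabla(N_{h_*}^{q-1})\,N_{h_*}^{q}\mathcal M}$, integrate by parts, use $\nabla(N_{h_*}^{q})=\tfrac{q}{q-1}N_{h_*}\nabla(N_{h_*}^{q-1})$, identify $\mathcal M_1$ from its definition, and recombine via $\tfrac{q}{1-q}+1=\tfrac{1}{1-q}$. Your added remark on the vanishing of the boundary term is a point the paper leaves implicit, but it does not change the argument.
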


\begin{proof} We have
\begin{equation*}
\begin{aligned}
&\ird{|\nabla(\alpha(w) N_{h_*}^{q-1})|^2N_{h_*} \mathcal M     }\\
=&\ird{|N_{h_*}^{q-1}\nabla \alpha(w)  + \alpha(w) \nabla  (N_{h_*}^{q-1})   |^2N_{h_*} \mathcal M  }\\
=&\ird{|\alpha'(w)|^2|\nabla w|^2N_{h_*}^{2q-1}\mathcal M   }+\ird{\alpha^2(w)|\nabla  (N_{h_*}^{q-1})|^2N_{h_*} \mathcal M   }+\ird{ \nabla(\alpha^2(w) ) N_{h_*}^{q} \mathcal M  \nabla(N_{h_*}^{q-1})  }\\
=&\ird{|\alpha'(w)|^2|\nabla w|^2N_{h_*}^{2q-1} \mathcal M   }+\ird{\alpha^2(w)|\nabla  (N_{h_*}^{q-1})|^2N_{h_*} \mathcal M  }-\ird{ \alpha^2(w)  \nabla ({N_{h_*}^{q} })\nabla(N_{h_*}^{q-1}) \mathcal M  }\\
&-\ird{ \alpha^2(w)  {N_{h_*}^{q} }\Delta(N_{h_*}^{q-1}) \mathcal M   }-\ird{ \alpha^2(w)  {N_{h_*}^{q} }\nabla  \mathcal M   \cdot \nabla(N_{h_*}^{q-1}) }\\
=&\ird{|\alpha'(w)|^2|\nabla w|^2N_{h_*}^{2q-1}\mathcal M   }+ \frac {1} {1-q}\ird{\alpha^2(w)|\nabla  (N_{h_*}^{q-1})|^2N_{h_*} \mathcal M   }-\ird{ \alpha^2(w)  {N_{h_*}^{q} }\mathcal M  _1 }
\end{aligned}
\end{equation*}
where in the third line we use that
\[
\nabla ({N_{h_*}^{q} }) =\frac q {q-1} N_{h_*} \nabla ({N_{h_*}^{q-1} }).
\]
\end{proof}
Next, we define two functionals
\begin{equation}
\label{functional1}
\Phi_1[f]  := \frac 1 2 \ird{|f|^2 N_{h_*}^{2-q} }
\end{equation}
and
\begin{equation}
\label{functional2}
\Phi_2[f]  := q \ird{| \nabla f|^2 N_{h_*} \mathcal M    }
\end{equation}
\begin{lem}\label{lemhp}(Hardy-Poincar\'e inequality) Define $q_* = \frac {N-2-\lambda} {N-2}$.  For any $ 1 \le N \le 2$, $q\in(0,1)$ or $N \ge 3$, $q\in (0, 1), q \neq q_*$ and any suitable function  $f$ satisfies
\[
\ird{fN_{h_*}^{2-q}} =0
\]
we have
\[
\Phi_1[f] \le C_{q, N,\lambda} \Phi_2[f]  
\]
for some constant $C_{q, N,\lambda}>0$.
\end{lem}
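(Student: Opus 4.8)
My plan is to read the inequality as a spectral-gap estimate. Set $d\mu_0:=N_{h_*}^{2-q}\,dx$ and $d\mu_1:=N_{h_*}\mathcal M\,dx$; then $2\Phi_1[f]=\int f^2\,d\mu_0$ and $\Phi_2[f]=q\int|\nabla f|^2\,d\mu_1$, so the claim is a weighted Poincar\'e inequality for the self-adjoint operator $Lf=-\mu_0^{-1}\nabla\cdot(\mu_1\nabla f)$ on $L^2(\mu_0)$, the hypothesis $\ird{f\,N_{h_*}^{2-q}}=0$ being exactly orthogonality to its kernel (the constants). The computation that explains the weight $\mathcal M$ is $\mu_1/\mu_0=\mathcal M\,N_{h_*}^{q-1}=\frac{1-q}{q}\,\mathcal M\,(h_*+V_\lambda)$: since $h_*+V_\lambda\sim|x|^\lambda$ and, for $\lambda\ge 2$, $\mathcal M\sim|x|^{2-\lambda}$ at infinity, the ratio grows like $|x|^2$, which is precisely the Barenblatt scaling under which such Hardy-Poincar\'e inequalities are known to hold. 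So $\mathcal M$ is tuned to convert the ``$|x|^\lambda$'' drift into the ``$|x|^2$'' growth of the harmonic case.

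First I would diagonalize. Both weights are radial, so writing $f=\sum_{k\ge 0}f_k(r)Y_k(\omega)$ in spherical harmonics makes $\Phi_1$ and $\Phi_2$ split over the modes; the constraint only constrains $k=0$, and for $k\ge 1$ the angular Laplacian adds a nonnegative centrifugal term $k(k+N-2)r^{-2}|f_k|^2$ to the Dirichlet density, so those modes are strictly easier. It then suffices to prove the one-dimensional weighted inequality for each $k$ with a constant uniform in $k$; the gap is attained on a low mode ($k=0$ or $k=1$).

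For the radial mode I would reduce to a known inequality by the substitution $s=r^{\lambda/2}$: this turns $V_\lambda$ into the quadratic potential $\frac1\lambda s^2$, turns $N_{h_*}$ into a Barenblatt profile $\big(\frac{1-q}{q}(h_*+\frac1\lambda s^2)\big)^{1/(q-1)}$, and produces an effective dimension $\nu=2N/\lambda$ through $r^{N-1}\,dr\propto s^{\nu-1}\,ds$. The density weight becomes $(1+c\,s^2)^{(2-q)/(q-1)}$ and the gradient weight $(1+c\,s^2)^{1/(q-1)}$, i.e. exactly the pair of the Hardy-Poincar\'e inequality of \cite{blanchet2007hardy} in dimension $\nu$; invoking that inequality (and noting that the bounded factor coming from $\mathcal M$ tends to $1$ at the scale that fixes the gap) gives the radial estimate. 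The exceptional exponent $q_*=\frac{N-2-\lambda}{N-2}$ is the value at which $N_{h_*}$ decays like the Newtonian kernel $|x|^{2-N}$ and the eigenvalue of the borderline mode vanishes, forcing its exclusion; since $q_*<\frac{N}{N+\lambda}$, this value is automatically avoided in the range of Theorem~\ref{thm1.1}.

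The step I expect to be the real obstacle is controlling the constant through the \emph{tail} of the measure, which is what governs the gap: the Muckenhoupt-type quantity $\big(\int_{|x|>R}d\mu_0\big)\big(\int_{|x|<R}(\text{gradient weight})^{-1}\big)$ scales like $R^{2-\lambda}$, so it stays bounded exactly when $\lambda\ge 2$, and this is where the assumption on $\lambda$ is decisive. For $\lambda<2$ one only has $\mu_1/\mu_0\sim|x|^\lambda$ with $\lambda<2$, so the best one can hope for is a weak/weighted version of the inequality, which is consistent with the polynomial (rather than exponential) rate obtained in that case in Theorem~\ref{thm1.1}. Assembling the radial estimate with the easier higher modes, and taking $C_{q,N,\lambda}$ to be the largest modal constant, then yields $\Phi_1[f]\le C_{q,N,\lambda}\Phi_2[f]$.
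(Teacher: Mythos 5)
Your radial-mode reduction contains the genuine gap. Carrying out the substitution $s=r^{\lambda/2}$ exactly, the density side does become the Barenblatt measure in effective dimension $\nu=2N/\lambda$, but the gradient side does not: one has the identity
\[
\int_0^\infty |f'(r)|^2\,N_{h_*}\mathcal M\,r^{N-1}\,dr
=\frac{\lambda}{2}\int_0^\infty\left|\frac{df}{ds}\right|^2 N_{h_*}\,
\frac{s^{2-4/\lambda}}{1+s^{2-4/\lambda}}\,s^{\nu-1}\,ds ,
\]
where $N_{h_*}=\left(\frac{1-q}{q}\left(h_*+\frac{s^2}{\lambda}\right)\right)^{\frac{1}{q-1}}$ is indeed a Barenblatt profile in the variable $s$. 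The leftover factor $\theta(s)=s^{2-4/\lambda}/(1+s^{2-4/\lambda})$ cannot be absorbed: it is $\le 1$ and multiplies your \emph{right-hand} side, so the inequality you need is strictly \emph{stronger} than the dimension-$\nu$ inequality of \cite{blanchet2007hardy}, and "invoking that inequality" proves nothing. Moreover, for $\lambda>2$ the factor vanishes at $s=0$, so your parenthetical remark that it "tends to $1$ at the scale that fixes the gap" addresses the tail while the defect sits at the origin — a pure artifact of the change of variables, since in the original variables both weights are bounded above and below near $x=0$ when $\lambda\ge2$. Nor is an origin degeneracy harmless in general: whether a weighted Poincar\'e inequality survives a vanishing gradient weight depends on the rate of vanishing (an exponentially vanishing factor would destroy it). So the radial case genuinely requires a quantitative verification at the origin, e.g. the bilateral Muckenhoupt criterion of Barthe--Roberto (the very Lemma~\ref{lem7.7} the paper uses later for the mean-field analogue), under which the origin-side product with the factor $\theta$ can be checked to vanish as $r\to 0$. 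That verification is the actual proof of the radial estimate, and it is missing from your argument.

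Beyond the gap, your route is also much heavier than the paper's, which needs neither spherical harmonics nor a change of variables: for $\lambda\ge 2$ one has $(1+|x|^2)(1+|x|^{\lambda-2})\asymp 1+|x|^{\lambda}$, hence $N_{h_*}^{2-q}\asymp(1+|x|^2)^{\beta}$ and $N_{h_*}\mathcal M\asymp(1+|x|^2)^{\beta+1}$ with $\beta=\frac{\lambda(2-q)}{2(q-1)}$, and the lemma is then literally the classical Hardy--Poincar\'e inequality of \cite{blanchet2007hardy,blanchet2009asymptotics} in the original dimension $N$. On the other hand, two of your side observations are correct and valuable. First, $q_*$ is exactly the exponent of Newtonian decay of $N_{h_*}$ and lies below $\frac{N}{N+\lambda}$. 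Second, your tail analysis is right: cut-off test functions at scale $R$ give $\Phi_1/\Phi_2\gtrsim R^{2-\lambda}$, so the inequality as stated genuinely requires $\lambda\ge 2$ — a restriction the lemma does not make. The paper's own weight equivalence above likewise fails for $\lambda<2$ (at both the origin and infinity), so your refusal to claim the case $\lambda\in(0,2)$ is not a weakness of your method but a real limitation of the lemma as stated, even though the paper invokes it for $\lambda\in(0,2)$ in Section~\ref{Sec4:fastdiffu3}.
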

Since $\frac {1}{C_\lambda} (1+|x|^2)(1+|x|^{\lambda-2}) \le 1+|x|^\lambda \le C_\lambda (1+|x|^2)(1+|x|^{\lambda-2})$, for some $C_\lambda>0$ so this is just the classical Hardy-Poincar\'e inequality, see for example \cite{blanchet2009asymptotics, blanchet2007hardy} for the full proof.

\begin{lem}\label{lem4.3}
Let $w$ be the solution to the equation \eqref{eqtran1} and denote $\eta:=(w-1)N_{h_*}^{q-1}$. There exist constants $\beta_1,\beta_2>0$, such that
\begin{equation*}
\Phi_2[\eta]\le \beta_1\mathcal{I}_1[w]+\beta_2 \Phi_1[\eta].
\end{equation*}
with
\[
\mathcal{I}_1[w]:= q\ird{|\nabla(h_q(w)N_{h_*}^{q-1})|^2N_{h_*} \mathcal M   w }
\]
here $\beta_2$ can be arbitrarily small if $t$ large.
\end{lem}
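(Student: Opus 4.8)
The plan is to compare $\Phi_2[\eta]$, built from $\eta=(w-1)N_{h_*}^{q-1}$, with the analogous quantity built from $\tilde\eta:=h_q(w)N_{h_*}^{q-1}=\frac{w^{q-1}-1}{q-1}N_{h_*}^{q-1}$, and then to recognize $\mathcal{I}_1[w]$ as a weighted version of $\Phi_2[\tilde\eta]$. The crucial input is the convergence without rate established in the previous section: for any prescribed $\epsilon>0$ there is a time $t_0(\epsilon)$ beyond which $1-\epsilon\le w(t,x)\le 1+\epsilon$ uniformly in $x$. Since $w\ge 1-\epsilon>0$, directly from the definitions,
\[
\mathcal{I}_1[w]=q\ird{|\nabla\tilde\eta|^2N_{h_*}\mathcal M\,w}\ge(1-\epsilon)\,q\ird{|\nabla\tilde\eta|^2N_{h_*}\mathcal M}=(1-\epsilon)\Phi_2[\tilde\eta],
\]
so it suffices to prove $\Phi_2[\eta]\le c_\epsilon\Phi_2[\tilde\eta]+\beta_2\Phi_1[\eta]$ with $c_\epsilon$ bounded and $\beta_2\to 0$ as $\epsilon\to 0$.

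Next I would apply Lemma~\ref{lem4.1} twice: once with $\alpha(w)=w-1$ (so $\alpha'=1$, $\alpha^2=(w-1)^2$), which reproduces $\Phi_2[\eta]/q$, and once with $\alpha(w)=h_q(w)$ (so $\alpha'(w)=w^{q-2}$, $\alpha^2=h_q(w)^2$), which reproduces $\Phi_2[\tilde\eta]/q$. This yields two identities of identical shape: a gradient term $\ird{|\alpha'(w)|^2|\nabla w|^2N_{h_*}^{2q-1}\mathcal M}$, a term $\frac{1}{1-q}\ird{\alpha^2(w)|\nabla(N_{h_*}^{q-1})|^2N_{h_*}\mathcal M}$, and a term $-\ird{\alpha^2(w)N_{h_*}^q\mathcal M_1}$. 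For the gradient terms the only difference is the factor $|\alpha'(w)|^2$, equal to $1$ for $\eta$ and to $w^{2(q-2)}$ for $\tilde\eta$; since $q-2<0$ and $w\le 1+\epsilon$ one has $w^{2(q-2)}\ge(1+\epsilon)^{2(q-2)}$, so the gradient term of $\Phi_2[\eta]$ is at most $c_\epsilon:=(1+\epsilon)^{2(2-q)}$ times that of $\Phi_2[\tilde\eta]$, with $c_\epsilon\to 1$.

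It then remains to control the difference of the lower-order terms, and this is where the smallness of $\beta_2$ is gained. Writing $h_q(w)=(w-1)\theta(w)$ with $\theta(w)=h_q(w)/(w-1)\to h_q'(1)=1$ as $w\to 1$, uniform closeness of $w$ to $1$ gives $\theta(w)=1+O(\epsilon)$, hence $(w-1)^2-c_\epsilon h_q(w)^2=(w-1)^2\,O(\epsilon)$ pointwise. Consequently the two differences
\[
\ird{\big[(w-1)^2-c_\epsilon h_q(w)^2\big]|\nabla(N_{h_*}^{q-1})|^2N_{h_*}\mathcal M},\qquad \ird{\big[(w-1)^2-c_\epsilon h_q(w)^2\big]N_{h_*}^q\mathcal M_1}
\]
are bounded in absolute value by $O(\epsilon)\ird{(w-1)^2N_{h_*}^q}$, once one checks that both weights $|\nabla(N_{h_*}^{q-1})|^2N_{h_*}\mathcal M$ and $N_{h_*}^q\mathcal M_1$ are $\le CN_{h_*}^q$. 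For the first this follows from $\nabla(N_{h_*}^{q-1})=\frac{1-q}{q}\nabla V_\lambda$ together with a matching of decay exponents at infinity (the product scales exactly like $N_{h_*}^q$) and its vanishing at the origin; for the second it is the stated pointwise bound on $\mathcal M_1$. Recalling $\ird{(w-1)^2N_{h_*}^q}=2\Phi_1[\eta]$, this gives $\Phi_2[\eta]\le c_\epsilon\Phi_2[\tilde\eta]+O(\epsilon)\Phi_1[\eta]$, and combining with the first step yields the claim with $\beta_1=c_\epsilon/(1-\epsilon)$ fixed and $\beta_2=O(\epsilon)$ arbitrarily small when $t$ is large.

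The main obstacle is precisely this last step: a crude bound of the lower-order terms by $C\Phi_1[\eta]$ would only deliver a fixed $\beta_2$, whereas the statement requires $\beta_2\to 0$. The gain comes from the exact cancellation of the leading $(w-1)^2$ contributions between $\Phi_2[\eta]$ and $\Phi_2[\tilde\eta]$, which relies on $h_q'(1)=1$ and on genuine uniform smallness of $w-1$ (not merely $w$ being bounded away from $1$). The supporting weight comparison $|\nabla(N_{h_*}^{q-1})|^2N_{h_*}\mathcal M\le CN_{h_*}^q$ is routine but must be verified both near the origin and at infinity, separating the cases $\lambda\ge 2$ and $\lambda<2$ in the behaviour of $\mathcal M$.
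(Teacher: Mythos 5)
Your proof is correct, and its skeleton is the same as the paper's: Lemma~\ref{lem4.1} applied with $\alpha=h_2$ and $\alpha=h_q$, the gradient terms compared through $|h_2'(w)/h_q'(w)|^2=w^{2(2-q)}\le(1+\epsilon)^{2(2-q)}=c_\epsilon$, the smallness of $\beta_2$ extracted from the uniform convergence $w\to1$, and $\mathcal{I}_1[w]\ge(1-\epsilon)\,\Phi_2[\tilde\eta]$ from $w\ge1-\epsilon$ (the paper states this as $q\ird{|\nabla(h_q(w)N_{h_*}^{q-1})|^2N_{h_*}\mathcal M}\le\frac1{W_0}\mathcal I_1[w]$). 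The one place you genuinely deviate is the treatment of the lower-order terms, and there the paper is leaner. The paper multiplies the whole $h_q$-identity by the single constant $\alpha_1=c_\epsilon$, so that after substitution the two terms $\frac{q}{1-q}\ird{(\cdot)\,|\nabla(N_{h_*}^{q-1})|^2N_{h_*}\mathcal M}$ cancel exactly (using $h_2^2\le\alpha_1 h_q^2$ pointwise), and the only remainder is the $\mathcal M_1$-term, which is $\le C\Phi_1[\eta]$ simply because $|\mathcal M_1|$ is uniformly bounded; no decay analysis of weights is ever needed. You instead bound the mismatch $|(w-1)^2-c_\epsilon h_q(w)^2|=O(\epsilon)(w-1)^2$ and then must pay for the weight via the comparison $|\nabla(N_{h_*}^{q-1})|^2N_{h_*}\mathcal M\le CN_{h_*}^q$. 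That comparison is true for every $\lambda>0$: since $\nabla(N_{h_*}^{q-1})=\frac{1-q}{q}x|x|^{\lambda-2}$ and $N_{h_*}^{q}=\frac{1-q}{q}\left(h_*+\tfrac1\lambda|x|^\lambda\right)N_{h_*}$, the ratio equals $\frac{1-q}{q}\,|x|^{2\lambda-2}\mathcal M\big/\left(h_*+\tfrac1\lambda|x|^\lambda\right)$, bounded at infinity by $\lambda\frac{1-q}{q}$ because $|x|^{\lambda-2}\mathcal M\le1$, and bounded near the origin — so your argument closes. But note the extra verification is avoidable with what you already have: by the mean value theorem $h_q(w)=(w-1)\xi^{q-2}$ with $\xi$ between $1$ and $w$, hence $(w-1)^2\le c_\epsilon h_q(w)^2$ pointwise, so your $|\nabla(N_{h_*}^{q-1})|^2$-difference is non-positive and can simply be discarded; this sign observation is precisely the paper's cancellation. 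The $\mathcal M_1$-difference is then the only term requiring the $O(\epsilon)$ estimate, and there the two proofs coincide.
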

\begin{proof}
Let $\alpha_0 = W_0^{2(2-q)} , \alpha_1 = W_1^{2(2-q)} $. Since $|h_2/h_m|$ is non-decreasing, we have
\[
\alpha_0 \le \left|\frac {h_2(W_0) } {h_q(W_0)} \right|^2 \le \left|\frac {h_2(w) } {h_q(w)} \right|^2  \le \left|\frac {h_2(W_1) } {h_q(W_1)} \right|^2 
  \le \alpha_1
\]
and
\[
\alpha_0 \le \left|\frac {h'_2(w) } {h'_q(w)} \right|^2  \le \alpha_1
\]
note that $\alpha_0 = \alpha_0(W_0) <1 < \alpha_1 = \alpha_1(W_1)$ and both converges to 1 as $W_0, W_1 \to 1$. 
By Lemma \ref{lem4.1}, take $\alpha(w) = h_2(w)$ we have
\begin{equation*}
\begin{aligned}
& \Phi_1[\eta]=q\ird{|\nabla(h_2(w)N_{h_*}^{q-1})|^2N_{h_*} \mathcal M }\\
=&q\ird{|h'_2(w)|^2|\nabla w|^2N_{h_*}^{2q-1} \mathcal M  }+ \frac {q} {1-q}\ird{h_2^2(w)|\nabla  (N_{h_*}^{q-1})|^2N_{h_*} \mathcal M }-q\ird{ h_2^2(w)  {N_{h_*}^{q} }\mathcal M_1 }\\
\le&q \alpha_1\ird{|h'_q(w)|^2|\nabla w|^2N_{h_*}^{2q-1} \mathcal M }+ \frac {\alpha_1 q} {1-q}\ird{h^2_q(w)|\nabla  (N_{h_*}^{q-1})|^2N_{h_*} \mathcal M }-q\ird{ h^2_2(w)  {N_{h_*}^{q} }\mathcal M_1  }
\end{aligned}
\end{equation*}
and take $\alpha(w) = h_q(w)$ in Lemma \ref{lem4.1}, we have
\begin{equation*}
\begin{aligned}
&\ird{|h'_q(w)|^2|\nabla w|^2N_{h_*}^{2q-1} \mathcal M   }\\
=&\ird{|\nabla(h_q(w)N_{h_*}^{q-1})|^2N_{h_*} \mathcal M   }- \frac {1} {1-q}\ird{h^2_q(w)|\nabla  (N_{h_*}^{q-1})|^2N_{h_*} \mathcal M  }+\ird{ h^2_q(w)  {N_{h_*}^{q} }\mathcal M_1 }
\end{aligned}
\end{equation*}
so
\begin{equation*}
\begin{aligned}
\Phi_1[\eta] &\le q\alpha_1\ird{|\nabla(h_q(w)N_{h_*}^{q-1})|^2N_{h_*} \mathcal M   }+q\ird{(\alpha_1 |h_q(w)|^2-|h_2(w)|^2)   \mathcal M  _1   N_{h_*}^{q} }\\
&\le q\alpha_1\ird{|\nabla(h_q(w)N_{h_*}^{q-1})|^2N_{h_*} \mathcal M   }+q\ird{\left(\frac {\alpha_1} {\alpha_0} -1
\right) |h_2(w)|^2 |\mathcal M_1 | N_{h_*}^{q} }
\end{aligned}
\end{equation*}
next, since  $0< W_0 \le w$,
\begin{equation*}
q\ird{|\nabla(h_q(w)N_{h_*}^{q-1})|^2N_{h_*} \mathcal M }\le \frac{1}{W_0}\mathcal{I}_1[w]
\end{equation*}
recall that
\begin{equation*}
\eta=(w-1)N_{h_*}^{q-1}, \quad | \mathcal M_1 | \quad\mbox{is uniformly bounded},
\end{equation*}
so we have
\[
q\ird{\left(\frac {\alpha_1} {\alpha_0} -1\right) |h_2(w)|^2  N_{h_*}^{q} |\mathcal M_1| } = q\left(\frac {\alpha_1} {\alpha_0} -1\right)\ird{|\eta|^2 N_{h_*}^{2-q} |\mathcal M_1| }  \le \mathcal C' q\left(\frac {\alpha_1} {\alpha_0} -1\right) \mathcal F[\eta] 
\]
for some constant $\mathcal C'>0$, since $\frac {\alpha_1} {\alpha_2} -1 \to 0$ as $t \to \infty$, we finally we obtain the result.
\end{proof}

\begin{cor}
With the notations above, we have
\[
\mathcal{F} [w] \le \gamma \mathcal{I}_1 [w]
\]
for some $\gamma>0$.
\end{cor}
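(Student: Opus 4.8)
The plan is to chain together the three estimates already established: the two-sided comparison between the relative entropy $\mathcal{F}[w]$ and the quadratic functional $\Phi_1[\eta]$ from Lemma~\ref{lem3.2}, the Hardy--Poincar\'e inequality of Lemma~\ref{lemhp}, and the reverse-type bound of Lemma~\ref{lem4.3}. Writing $\eta=(w-1)N_{h_*}^{q-1}$ as in Lemma~\ref{lem4.3}, I first observe that since $|\eta|^2N_{h_*}^{2-q}=|w-1|^2N_{h_*}^{q}$, Lemma~\ref{lem3.2} can be restated as
\[
W_1^{q-2}\,\Phi_1[\eta]\le\mathcal{F}[w]\le W_0^{q-2}\,\Phi_1[\eta],
\]
so it suffices to dominate $\Phi_1[\eta]$ by $\mathcal{I}_1[w]$ and then invoke the right-hand inequality.

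Before applying the Hardy--Poincar\'e inequality I must verify its admissibility hypothesis $\ird{\eta\,N_{h_*}^{2-q}}=0$. This is precisely mass conservation: $\ird{\eta\,N_{h_*}^{2-q}}=\ird{(w-1)N_{h_*}}=\ird{(n-N_{h_*})}=m-m=0$, the last equality being the defining property of $h_*$. Granting this, Lemma~\ref{lemhp} gives $\Phi_1[\eta]\le C_{q,N,\lambda}\,\Phi_2[\eta]$, while Lemma~\ref{lem4.3} gives $\Phi_2[\eta]\le\beta_1\mathcal{I}_1[w]+\beta_2\Phi_1[\eta]$. Composing the two yields
\[
\Phi_1[\eta]\le C_{q,N,\lambda}\big(\beta_1\mathcal{I}_1[w]+\beta_2\Phi_1[\eta]\big).
\]

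The final step is an absorption argument. Because $\beta_2$ can be made arbitrarily small once $t$ is large --- exactly the content of the convergence-without-rate result of the previous section, which lets us take $W_0,W_1$ as close to $1$ as we wish --- I fix $t$ large enough that $C_{q,N,\lambda}\beta_2\le\tfrac12$, move the $\Phi_1[\eta]$ term to the left, and obtain $\Phi_1[\eta]\le 2C_{q,N,\lambda}\beta_1\,\mathcal{I}_1[w]$. Inserting this into the upper bound $\mathcal{F}[w]\le W_0^{q-2}\Phi_1[\eta]$ produces $\mathcal{F}[w]\le\gamma\,\mathcal{I}_1[w]$ with $\gamma:=2W_0^{q-2}C_{q,N,\lambda}\beta_1$. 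I expect the only genuine subtlety to be this absorption: one must check that the threshold on $t$ (equivalently, on how close $W_0,W_1$ must be to $1$) is compatible with the threshold already needed in Lemma~\ref{lem4.3}, so that a single $\gamma>0$, valid uniformly for all $t$ beyond that threshold, can indeed be extracted.
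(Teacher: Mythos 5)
Your proposal is correct and follows essentially the same route as the paper's own proof: verifying the zero-mass condition $\ird{\eta N_{h_*}^{2-q}}=\ird{n-N_{h_*}}=0$, chaining the Hardy--Poincar\'e inequality of Lemma~\ref{lemhp} with Lemma~\ref{lem4.3}, absorbing the $\beta_2\Phi_1[\eta]$ term for $t$ large, and closing with the bound $\mathcal{F}[w]\le W_0^{q-2}\Phi_1[\eta]$ from Lemma~\ref{lem3.2}. Your explicit attention to the compatibility of the time thresholds is a welcome refinement that the paper leaves implicit.
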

\begin{proof}
By Lemma \ref{lem4.3},
\begin{equation*}
\Phi_2[\eta]\le \beta_1\mathcal{I}_1[w]+\beta_2 \Phi_1[\eta].
\end{equation*}
and since
\[
\ird{\eta N_{h_*}^{2-q}} = \ird{(w-1) N_{h_*}} = \ird{n-N_{h_*}} =0
\]
by Hardy Poincar\'e inequality
\[
\Phi_1[\eta] \le C_{q, N,\lambda} \Phi_2[\eta]  
\]
 we have
 \[
\Phi_1[\eta] \le\frac { \beta_1C_{q, N,\lambda } }  {1- \beta_2 C_{q, N,\lambda}} \mathcal{I}_1[w]
\]
since we can pick $\beta_2$ small. So the theorem ends since
\[
\Phi_1[\eta] \ge  {W_0}^{2-q} \mathcal{F} [w]
\]
from Lemma \ref{lem3.2}.
\end{proof}
\begin{cor}
\label{cor4.5}
There exist constants $\mathcal K, \beta>0$, such that\\

(i)
For any $\lambda\ge 2$,
\[
\mathcal{F}[w] \le \mathcal K e^{-\beta t}\quad\mbox{for any}\quad t\ge 0;
\]

 (ii)
For any $\lambda \in(0, 2)\,\,and\,\,\frac {N+2} {N+2+\lambda}<q<1$,
\[
\mathcal{F}[w] \le \mathcal K(1+t)^{-\beta}\quad\mbox{for any}\quad t\ge 0.
\]
\end{cor}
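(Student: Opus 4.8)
The plan is to turn the entropy--entropy-production inequality established just above, namely $\mathcal F[w]\le\gamma\,\mathcal I_1[w]$, together with the dissipation identity $\frac{d}{dt}\mathcal F[w(t)]=-\mathcal I[w(t)]$, into a decay estimate for $\mathcal F[w]$. Since $\mathcal I_1$ and $\mathcal I$ differ only through the weight $\mathcal M$, the whole matter reduces to comparing the two weighted Fisher informations, and the regimes $\lambda\ge2$ and $\lambda<2$ are separated precisely by the behaviour of $\mathcal M$. I will always run the Gr\"onwall argument for $t\ge t_0$, where by the convergence-without-rate results of the previous section $w(t)$ is uniformly close to $1$ and the constants $W_0,W_1$ (hence $\gamma$) have stabilised; the bound on the remaining interval $[0,t_0]$ is then obtained for free by enlarging $\mathcal K$, since $\mathcal F[w(t)]$ is non-increasing and finite there. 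No further conversion is needed, as the statement is phrased directly in terms of $\mathcal F[w]$.

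For part (i), when $\lambda\ge2$ the weight obeys $0\le\mathcal M\le1$, so $\mathcal I_1[w]\le\mathcal I[w]$ and the inequality of the previous corollary closes linearly: $\mathcal F[w]\le\gamma\,\mathcal I_1[w]\le\gamma\,\mathcal I[w]=-\gamma\,\frac{d}{dt}\mathcal F[w]$. Integrating the linear differential inequality $\frac{d}{dt}\mathcal F[w]\le-\gamma^{-1}\mathcal F[w]$ from $t_0$ gives $\mathcal F[w(t)]\le\mathcal F[w(t_0)]\,e^{-(t-t_0)/\gamma}$, which is the exponential bound with $\beta=1/\gamma$. Here the key point is that both ingredients are simultaneously available: the weighted Hardy--Poincar\'e inequality of Lemma~\ref{lemhp} reduces to the classical one through the weight comparison noted after that lemma, and the domination $\mathcal I_1\le\mathcal I$ is immediate.

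For part (ii), when $\lambda\in(0,2)$ the weight degenerates at the origin, $\mathcal M(x)\sim|x|^{2-\lambda}\to0$ as $x\to0$, and the reduction of Lemma~\ref{lemhp} to the classical Hardy--Poincar\'e inequality fails there, because the weight comparison used after that lemma breaks down near $x=0$. Consequently the clean inequality $\mathcal F[w]\le\gamma\,\mathcal I_1[w]$ is no longer at our disposal, and the spectral-gap mechanism of part (i) is unavailable. Instead the plan is to prove a weak, interpolation-type inequality
\[
\mathcal F[w]\le C\,\mathcal I[w]^{\theta}\,M[w]^{1-\theta},\qquad\theta\in(0,1),
\]
where $M[w]$ is a more heavily weighted quadratic quantity in $w-1$ that is finite and uniformly bounded in $t$ thanks to (H1') and the uniform $C^k$ regularity of $w$. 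The hypothesis $q>\frac{N+2}{N+2+\lambda}$ is exactly the condition guaranteeing $\theta\in(0,1)$, i.e.\ finiteness of the constant in the degenerate-weight estimate. Combined with $\mathcal I[w]=-\frac{d}{dt}\mathcal F[w]$ this yields the nonlinear differential inequality $-\frac{d}{dt}\mathcal F[w]\ge c\,\mathcal F[w]^{1/\theta}$ with $1/\theta>1$, whose integration gives the algebraic rate $\mathcal F[w(t)]\lesssim(1+t)^{-\theta/(1-\theta)}$, i.e.\ $\beta=\theta/(1-\theta)$. The main obstacle is precisely this case: one must quantify the defect produced by the degeneracy of $\mathcal M$ at the origin, control $M[w]$ uniformly in time, and track the exponent $\theta$ so as to extract the sharp algebraic rate — the crude domination $\mathcal I_1\le\mathcal I$ that settles part (i) is of no use here, since it is the \emph{lower} bound on the dissipation by the entropy that degenerates when the near-origin region is weighted away.
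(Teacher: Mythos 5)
Your part (i) is correct and is exactly the paper's argument: for $\lambda\ge 2$ one has $\mathcal M\le 1$, hence $\mathcal I_1[w]\le \mathcal I[w]$, and combining the preceding corollary $\mathcal F[w]\le\gamma\,\mathcal I_1[w]$ with $\frac{d}{dt}\mathcal F[w]=-\mathcal I[w]$ and Gr\"onwall (run from a time $t_0$ after which $W_0,W_1$ are close to $1$, then enlarging $\mathcal K$ to cover $[0,t_0]$ by monotonicity of $\mathcal F$) gives the exponential rate.

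For part (ii) your skeleton also coincides with the paper's. The paper proves, by H\"older's inequality,
\[
\mathcal F[w]\le\gamma\,\mathcal I_1[w]\le\mathcal K\,\mathcal I[w]^{\alpha}\,\mathcal I_2[w]^{1-\alpha},
\qquad
\mathcal I_2[w]=q\ird{|\nabla(h_q(w)N_{h_*}^{q-1})|^2N_{h_*}(1+|x|^{\delta})w},\quad\delta>2-\lambda,
\]
bounds $\mathcal I_2[w(t)]$ uniformly in $t$ using the uniform $C^k$ bounds (the required tail integrability $\ird{|N_{h_*}^{q-1}|^2N_{h_*}(1+|x|^{\delta})}<\infty$ being exactly the hypothesis $q>\frac{N+2}{N+2+\lambda}$), and then integrates $-\frac{d}{dt}\mathcal F\ge c\,\mathcal F^{1/\alpha}$, precisely your nonlinear Gr\"onwall step with $\beta=\alpha/(1-\alpha)$. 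So your interpolation inequality, the role of $M[w]$ (the paper's $\mathcal I_2$), and the exponent condition are all the right ingredients. The genuine gap is your diagnosis of the obstruction, which would misdirect the execution. The difficulty for $\lambda\in(0,2)$ is \emph{not} the degeneracy of $\mathcal M$ at the origin: there $\mathcal M\sim|x|^{2-\lambda}$ with $2-\lambda\in(0,2)$, a locally integrable, $A_2$-type degeneracy that causes no harm. The real failure is \emph{at infinity}, where for $\lambda<2$ one has $\mathcal M\to 1$ and the sub-quadratic confinement $V_\lambda$ is too weak for any spectral gap: testing $\Phi_1[f]\le C\,\Phi_2[f]$ on bumps supported on annuli $|x|\sim R$ (corrected to satisfy the zero-mean constraint), where $N_{h_*}\sim R^{-\lambda/(1-q)}$, gives $\Phi_1[f_R]/\Phi_2[f_R]\sim R^{2-\lambda}\to\infty$, whatever happens near $x=0$. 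This is why the auxiliary functional must carry a weight that \emph{grows} at infinity with exponent $\delta>2-\lambda$, why $q>\frac{N+2}{N+2+\lambda}$ is a condition on tails, and why $M$ must be a Fisher-type (gradient) quantity with the same integrand as $\mathcal I$ under a heavier weight rather than a generic quadratic functional of $w-1$ — otherwise the H\"older step does not apply. An $M[w]$ built, as you propose, to ``quantify the defect produced by the degeneracy of $\mathcal M$ at the origin'' would leave the interpolation inequality false at infinity; built with the growing weight, your plan is exactly the paper's proof.
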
\begin{proof}
For $\lambda \ge 2$, $\mathcal M(x) \le 1$, so 
\[
\mathcal{F} [w] \le \mathcal K\mathcal{I}_1 [w] \le \mathcal K\mathcal{I} [w]
\]
so the conclusion follows. For $\lambda \in(0, 2)$,  by H\"older inequality
\[
\mathcal{F} [w] \le \gamma \mathcal{I}_1 [w] \le \mathcal K \mathcal{I} [w]^{\alpha}\mathcal{I}_2[w]^{1-\alpha}
\]
for some $\alpha \in (0, 1)$ with
\begin{equation}
\label{ratecon1}
\mathcal{I}_2[w]= q\ird{|\nabla(h_q(w) N_{h_*}^{q-1})|^2N_{h_*} (1+|x|^\delta)w }
\end{equation}
for some $\delta> 2-\lambda$. Recall that
\[
\sup_{t \ge t_0 }\Vert w(t) \Vert_{C^k(\R^N)} <+\infty
\]
so \eqref{ratecon1} is the same as
\[
\ird{|N_{h_*}^{q-1}|^2N_{h_*} (1+|x|^{\delta})  } < +\infty
\]
which is equivalent to $q > \frac {N+2} {N+2+\lambda}$.

\end{proof}

Theorem~\ref{thm1.1} can be directly deduced by Corollary~\ref{cor4.5} and Lemma~\ref{lem3.2}.

\section{Mean-field type equation: preliminaries}
\label{Sec5:nonlidiffu1}
In this section, we give some basic properties of the equation \eqref{fd2}. Remind that from now on we always suppose that $\lambda\ge 2$ and $q\in(q_*(N,\lambda),1)$, here
\[
q_*(N,\lambda)=\left\{
\begin{aligned}
  & \frac{2N}{2N+\lambda} , &\quad \lambda > 2 \\
  & \frac{N}{N+2}, &\quad \lambda =2
\end{aligned}
\right.
\]
we first define the change of variable
\begin{equation}
\label{cnange2}
\rho = \rho_\infty v = \rho_\infty (1+g) = \rho_\infty + j
\end{equation}
the existence of minimizers of the free energy is proved in \cite{dolbeault2018reverse}. To continue our proof, we still need the following theorems, the proofs  are similar as the fast diffusion equation with external drift \eqref{fd1} and thus omitted.

\begin{lem}
(Comparison principle) For any two non-negative solutions $\rho_1$ and $\rho_2$ of the equation \eqref{fd2} on $[0, T ), T > 0$, with initial data satisfying $\rho_{0,1} \le \rho_{0,2} $ almost everywhere, $\rho_{0,2} \in L^1_{loc} (\R^d)$, then we have $\rho_1(t) \le \rho_2(t)$ for almost every $t \in [0,T)$.
\end{lem}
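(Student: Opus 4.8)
The plan is to reduce the comparison principle to a one-sided $L^1$ estimate of Kato type, exactly as in the external-drift case \eqref{fd1}: once one knows that $t\mapsto\int_{\R^d}(\rho_1(t)-\rho_2(t))_+\,dx$ cannot increase (up to a controllable factor), ordered initial data $\rho_{0,1}\le\rho_{0,2}$, for which this quantity vanishes at $t=0$, force $\rho_1(t)\le\rho_2(t)$ for a.e. $t$. Setting $u:=\rho_1-\rho_2$ and $W_i:=V_\lambda*\rho_i$, I would subtract the two copies of \eqref{fd2} and write the difference of fluxes as
\[
\nabla(\rho_1^q-\rho_2^q)+u\,\nabla W_2+\rho_1\,\nabla(V_\lambda*u),
\]
so that the evolution of $u$ splits into a pure fast-diffusion part, a transport part carried by the \emph{fixed} velocity field $\nabla W_2$, and a genuinely nonlocal part in which the velocity difference $\nabla W_1-\nabla W_2=\nabla V_\lambda*u$ is itself driven by $u$.

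The diffusion and fixed-transport terms can be handled by the same mechanism as the local theory cited for \eqref{fd1}. Testing against a smooth monotone approximation $p_\epsilon$ of $\mathrm{sign}_+$ and using that $s\mapsto s^q$ is increasing (so that $\rho_1^q-\rho_2^q$ has the sign of $u$), Kato's inequality renders the contribution of $\nabla(\rho_1^q-\rho_2^q)$ nonpositive as $\epsilon\to0$; this is precisely the argument behind \cite{herrero1985cauchy, bonforte2006global, blanchet2009asymptotics}. The transport term $u\,\nabla W_2$ has a fixed velocity field common to both solutions, so a Kruzhkov doubling-of-variables computation bounds its contribution by $\int_{\R^d}(u)_+\,|\Delta W_2|\,dx$, and here I would use that in the regime of interest the solutions are trapped by (\textbf{H2}) between stationary states: this yields $L^1\cap L^\infty$ bounds and enough spatial decay to make $W_2$, $\nabla W_2$ and $\Delta W_2$ uniformly bounded (recall $\nabla V_\lambda(x)=|x|^{\lambda-2}x$ grows at infinity, so the decay of $\rho_2$ is essential for the drift to be controlled). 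That contribution is then at most $C\int_{\R^d}(u)_+\,dx$.

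The main obstacle is the nonlocal term $\rho_1\,\nabla(V_\lambda*u)$, because $\nabla V_\lambda*u$ couples the positive and negative parts of $u$ and hence cannot be dominated by $(u)_+$ in any naive way; after integration by parts against $p_\epsilon'(u)$ it produces a flux term concentrated on the level set $\{u=0\}$, whose sign is not given for free. My strategy is to close the estimate by exploiting the trapping and the smoothing of the convolution: the a priori $L^1\cap L^\infty$ control of $\rho_1,\rho_2$ makes $\nabla V_\lambda*u$ a bounded, spatially regular field, and the doubling-of-variables technique adapted to this velocity difference lets one absorb the zero-level-set flux into a term of the form $C(t)\int_{\R^d}(u)_+\,dx$, giving the closed differential inequality
\[
\frac{d}{dt}\int_{\R^d}(u)_+\,dx\le C(t)\int_{\R^d}(u)_+\,dx,
\]
whose right-hand side vanishes at $t=0$ for ordered data, so that Gronwall forces $(u)_+\equiv0$. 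Throughout, the formal testing has to be justified by approximating \eqref{fd2} by nondegenerate uniformly parabolic problems and passing to the limit via the interior regularity already available for these solutions, exactly as in the external-drift case; the genuinely new difficulty, and the step I expect to require the most care, is controlling the self-consistent nonlocal flux $\rho_1\,\nabla(V_\lambda*u)$.
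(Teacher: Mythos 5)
The paper itself gives no argument here: it simply declares the proofs for \eqref{fd2} ``similar'' to the external-drift case and cites the local references, so your attempt has to be judged on its own merits --- and it has a genuine gap, sitting exactly at the step you yourself flag as the hardest. The nonlocal flux $\rho_1\nabla(V_\lambda*u)$ cannot be absorbed into $C(t)\int(u)_+\,dx$. Testing with $p_\epsilon'(u)$ and letting $\epsilon\to0$ leaves, after expanding the divergence on $\{u>0\}$, the terms
\[
\int_{\{u>0\}}\nabla\rho_1\cdot\nabla(V_\lambda*u)\,dx+\int_{\{u>0\}}\rho_1\,\bigl(\Delta V_\lambda*u\bigr)\,dx,
\]
which depend on $u$ \emph{globally}, through both $(u)_+$ and $(u)_-$. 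Splitting $u=(u)_+-(u)_-$, the $(u)_-$ contributions (in particular $\int_{\{u>0\}}\nabla\rho_1\cdot\nabla V_\lambda*(u)_-\,dx$, which has no sign) are controlled only by weighted integrals of $(u)_-$, and mass conservation forces $\int(u)_-(t)=\int(u)_+(t)-\int u_0\,dx$. In the situation the lemma is actually used for --- trapping between stationary states of \emph{different} masses, so $\int u_0\,dx<0$ strictly --- the best differential inequality this scheme can produce is
\[
\frac{d}{dt}\int_{\R^d}(u)_+\,dx\;\le\; C\int_{\R^d}(u)_+\,dx+C\left|\int_{\R^d}u_0\,dx\right|,
\]
whose Gronwall bound does not vanish, so you cannot conclude $(u)_+\equiv0$. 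The structural reason the reduction ``exactly as in the external-drift case'' fails is visible pointwise: at a touching point the dangerous term for \eqref{fd1} is $(\nabla\rho_1-\nabla\rho_2)\cdot\nabla V_\lambda=0$, because both solutions feel the same potential, whereas for \eqref{fd2} one is left with $\nabla\rho_1\cdot\nabla(W_1-W_2)=\nabla\rho_1\cdot\nabla V_\lambda*u$, which has no sign. This is precisely why aggregation--diffusion equations generically lack comparison principles (cf.\ \cite{kim2012the}); any correct proof must exploit extra structure (equal masses, radial symmetry and comparison of mass functions, or the specific fact that the barriers are stationary states), not the bare Kato estimate.

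A secondary but real error: you claim the trapping makes $W_2$, $\nabla W_2$, $\Delta W_2$ uniformly bounded. This is false for $\lambda>2$: $\Delta V_\lambda=(N+\lambda-2)|x|^{\lambda-2}$ grows at infinity, and its convolution with a decaying density of positive mass still grows like $|x|^{\lambda-2}$ (only $\lambda=2$ gives a constant); likewise $\nabla W_2$ grows like $|x|^{\lambda-1}$. Decay of $\rho_2$ does not make these fields bounded, so even your ``fixed transport'' Gronwall term requires weighted $L^1$ norms, and you would then have to show the Kato argument closes in those weighted spaces. In short: the skeleton (Kato/$L^1$ contraction plus approximation by nondegenerate problems) matches what the paper implicitly appeals to, but the two steps above --- above all the treatment of the self-consistent field --- are not filled in, and the first one cannot be filled in by the method as stated.
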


\begin{lem}
(Uniform $C^k$ regularity) Let $q \in(q_*(N,\lambda), 1 )$ and $u$ be a solution of equation \eqref{fd2}. Then $v=\frac{\rho}{\rho_{\infty}} \in L^\infty_{loc} ((0,T) \times \R^N )$ satisfies for any $k \in \mathbb N$ and any $t_0 \in (0,T)$,
\[
\sup_{t \ge t_0 }\Vert v(t) \Vert_{C^k(\R^N)} <+\infty.
\]
\end{lem}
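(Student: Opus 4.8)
The plan is to reduce the regularity problem for the nonlocal equation \eqref{fd2} to the uniform regularity already available for the external drift equation \eqref{fd1}, by treating the mean field potential $W:=V_{\lambda}*\rho$ as a time dependent but uniformly regular external potential. First I would record the transformed equation for $v=\rho/\rho_{\infty}$. Writing $\rho=\rho_{\infty}v$ and $W_{\infty}:=V_{\lambda}*\rho_{\infty}$, the flux identity $\nabla(\rho^{q})+\rho\nabla W=\rho\,\nabla\bigl(\tfrac{q}{q-1}\rho^{q-1}+W\bigr)$ together with the stationary relation $\tfrac{q}{q-1}\rho_{\infty}^{q-1}+W_{\infty}=\text{const}$ lets us subtract the equilibrium combination and obtain, exactly as in \eqref{eqtran1},
\[
v_t=\frac{1}{\rho_{\infty}}\nabla\cdot\left[\rho_{\infty}\,v\,\nabla\left(\frac{q}{q-1}\rho_{\infty}^{q-1}(v^{q-1}-1)+V_{\lambda}*(\rho_{\infty}(v-1))\right)\right].
\]
The principal (diffusion) part is precisely of the type handled for \eqref{fd1}, while the convolution contributes only the lower order drift $\nabla\bigl(V_{\lambda}*(\rho-\rho_{\infty})\bigr)$. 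Note also that in the assumed range $q>\tfrac{2N}{2N+\lambda}$ the equilibrium $\rho_{\infty}$ carries no Dirac part, so $v$ is a genuine function and $\rho_{\infty}^{q-1}$ is well defined.

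Second, I would establish the two ingredients that let the external drift argument run: local uniform parabolicity and a uniformly regular potential. By hypothesis (\textbf{H2}) and the comparison principle, $\rho_{2}\le\rho(t,\cdot)\le\rho_{1}$ for all $t>0$, so on any compact set $v$ lies between the fixed positive ratios $\rho_{2}/\rho_{\infty}$ and $\rho_{1}/\rho_{\infty}$; this bounds $v$ away from $0$ and $\infty$ locally and renders the degenerate coefficient $v^{q-1}$ uniformly elliptic on compacts. For the potential, the same sandwich controls $\rho$ pointwise by stationary profiles of known decay, and since the low order derivatives $\nabla^{m}V_{\lambda}\sim|x|^{\lambda-m}$ are locally integrable, the convolutions $\nabla^{m}W=\nabla^{m}V_{\lambda}*\rho$ are bounded on compact sets uniformly in $t$. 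At each fixed time $W$ therefore plays the role of the external potential $V_{\lambda}$ in \eqref{fd1}, with uniform-in-time control on its derivatives.

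Third, with uniform parabolicity and bounded, uniformly regular coefficients, I would bootstrap exactly as in the proof of the corresponding lemma for \eqref{fd1} (see \cite{blanchet2009asymptotics}, Theorem 4): De Giorgi--Nash--Moser first yields a uniform interior $C^{\alpha}$ bound, then Schauder estimates upgrade this to $C^{2,\alpha}$, and differentiating the equation and iterating produces uniform $C^{k}$ bounds for every $k$. Time uniformity as $t\to\infty$ is obtained from the autonomy of \eqref{fd2}: applying the interior estimates on unit time slabs $[s,s+1]\times B_R$ yields constants independent of $s\ge t_{0}$, because the coefficient bounds coming from the stationary sandwich hold for all $t$. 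At each stage the nonlocal term only helps, since convolution with $V_{\lambda}$ gains regularity relative to $\rho$.

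The main obstacle is precisely this nonlocal, and for non-even $\lambda$ non-smooth, potential $W$: unlike the fixed smooth potential $V_{\lambda}$ in \eqref{fd1}, here $W$ depends on the unknown and its higher derivatives inherit both the growth of $V_{\lambda}$ at infinity and the singularity of $\nabla^{m}V_{\lambda}\sim|x|^{\lambda-m}$ at the origin. The growth at infinity is harmless on compact sets and is absorbed by the decay of $\rho\sim\rho_{\infty}$ in the convolution; the singularity at the origin is integrable only for $m<\lambda+N$, so for higher $m$ one must integrate by parts, transferring derivatives onto $\rho$ and invoking the regularity of $\rho=\rho_{\infty}v$ gained at the previous step. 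This is what makes the argument a genuine iteration rather than a single estimate, and I expect it to be the technically delicate point; it closes in the assumed range $\lambda>2$, $\tfrac{2N}{2N+\lambda}<q<1$, which is why the proof parallels the external drift case and may be omitted.
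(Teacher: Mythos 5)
Your proposal is correct and follows essentially the same route as the paper, which omits this proof entirely with the remark that it is ``similar as the fast diffusion equation with external drift'' (whose own $C^k$ lemma simply cites \cite{blanchet2009asymptotics}, Theorem 4): namely, view $V_\lambda*\rho$ as a uniformly controlled external potential via the (\textbf{H2}) sandwich and the comparison principle, and run the same De Giorgi--Nash--Moser plus Schauder bootstrap on the transformed equation \eqref{changev1}. Your discussion of the convolution term (moment bounds absorbing the growth of $\nabla^m V_\lambda$, and transferring excess derivatives onto $\rho$) supplies precisely the details the paper leaves implicit.
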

We can also similarly define
\[
\mathcal W_0:=\inf_{x \in \R^N}\frac{\rho_2(x)}{\rho_*(x)}, \quad\mathcal W_1:=\inf_{x \in \R^N}\frac{\rho_1(x)}{\rho_*(x)}
\]
\begin{lem}\label{lem5.3}
If $q \in (q_*(N,\lambda),1)$, $0 < \mathcal W_0 \le v \le \mathcal W_1 < +\infty$, then we have
\[
\frac 1 2 \mathcal W_1^{q-2} \ird{|v-1|^2 \rho_{\infty}^q}  \le  -\frac{1}{1-q}\ird{\rho_\infty^q [v^q -1 - q (v -1) ] } \le \frac 1 2 \mathcal W_0^{q-2} \ird{ |v-1|^2 \rho_{\infty}^q}
\]
\end{lem}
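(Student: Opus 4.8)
The plan is to reproduce the argument of Lemma~\ref{lem3.2} verbatim, with $v$, $\rho_\infty$ and $\mathcal W_0,\mathcal W_1$ playing the roles of $w$, $N_{h_*}$ and $W_0,W_1$. The middle quantity is $\ird{F(v)\,\rho_\infty^q}$ where
\[
F(v):=-\frac{1}{1-q}\bigl[v^q-1-q(v-1)\bigr],
\]
so the two claimed inequalities are just the pointwise estimates of $F(v)$ against a multiple of $(v-1)^2$ on the range $[\mathcal W_0,\mathcal W_1]$ of $v$, integrated against the nonnegative weight $\rho_\infty^q$.

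First I would record the elementary properties of $F$. A direct computation gives
\[
F(1)=0,\qquad F'(v)=\frac{q}{1-q}\bigl(1-v^{q-1}\bigr),\qquad F''(v)=q\,v^{q-2},
\]
so that $F'(1)=0$ as well. Since $q-2<0$, the map $v\mapsto v^{q-2}$ is strictly decreasing, hence on $[\mathcal W_0,\mathcal W_1]$ it is bounded below by $\mathcal W_1^{q-2}$ and above by $\mathcal W_0^{q-2}$; I will also use the straddling property $\mathcal W_0\le 1\le\mathcal W_1$ (the analogue of $W_0<1<W_1$ in the drift case), which places the critical point $v=1$ inside the range of $v$.

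Next, for a parameter $a>0$ I would introduce the auxiliary function
\[
\psi_a(v):=F(v)-a\,(v-1)^2,\qquad \psi_a(1)=\psi_a'(1)=0,\qquad \psi_a''(v)=q\,v^{q-2}-2a.
\]
Choosing $a=\tfrac{q}{2}\,\mathcal W_1^{q-2}$ gives $\psi_a''(v)=q\bigl(v^{q-2}-\mathcal W_1^{q-2}\bigr)\ge0$ for every $v\le\mathcal W_1$, so $\psi_a$ is convex on the segment joining any $v\in[\mathcal W_0,\mathcal W_1]$ to $1$; together with $\psi_a(1)=\psi_a'(1)=0$ this forces $\psi_a(v)\ge0$, i.e. $F(v)\ge\tfrac{q}{2}\mathcal W_1^{q-2}(v-1)^2$. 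Symmetrically, $a=\tfrac{q}{2}\,\mathcal W_0^{q-2}$ yields $\psi_a''\le0$ for $v\ge\mathcal W_0$, so $\psi_a$ is concave and $\psi_a(v)\le0$, i.e. $F(v)\le\tfrac{q}{2}\mathcal W_0^{q-2}(v-1)^2$. Multiplying both pointwise inequalities by $\rho_\infty^q\ge0$ and integrating over $\R^N$ gives the two bounds.

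I do not anticipate a genuine obstacle: the computation is the exact analogue of Lemma~\ref{lem3.2}, the only difference being the extra factor $q$ in $F''(v)=q\,v^{q-2}$, which propagates into the prefactors the argument actually produces, namely $\tfrac{q}{2}\mathcal W_1^{q-2}$ and $\tfrac{q}{2}\mathcal W_0^{q-2}$ (rather than the $\tfrac12$ appearing in the displayed statement). The one point requiring a line of justification is the straddling property $\mathcal W_0\le 1\le\mathcal W_1$, since it is precisely what allows the convexity, respectively concavity, of $\psi_a$ to be upgraded to a definite sign on all of $[\mathcal W_0,\mathcal W_1]$.
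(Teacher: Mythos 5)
Your proof is correct and is precisely the argument the paper intends: the paper omits the proof of this lemma, deferring to the argument of Lemma~\ref{lem3.2}, which is exactly what you reproduce (auxiliary function $\psi_a$ with $\psi_a(1)=\psi_a'(1)=0$, sign of $\psi_a''$ on $[\mathcal W_0,\mathcal W_1]$, and the straddling property $\mathcal W_0\le1\le\mathcal W_1$, which in context follows from mass conservation, since $\ird{(v-1)\rho_\infty}=0$ forces $\inf v\le1\le\sup v$). Your remark about the factor $q$ is also correct and worth stressing: the entropy here is $q$ times the one in Lemma~\ref{lem3.2}, so $F''(v)=q\,v^{q-2}$ and the argument yields the constants $\frac q2\mathcal W_1^{q-2}$ and $\frac q2\mathcal W_0^{q-2}$; moreover the lower bound as printed, with $\frac12\mathcal W_1^{q-2}$, is actually false in general --- for $q=\frac12$ one has $F(v)=(\sqrt v-1)^2$, which at $v=\mathcal W_1=1+\epsilon$ is $\sim\epsilon^2/4$, whereas $\frac12\mathcal W_1^{-3/2}(v-1)^2\sim\epsilon^2/2$. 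The paper itself confirms your corrected version: when this lemma is invoked in Section~\ref{Sec6:nonlidiffu2}, it is quoted with the constants $\frac q2\mathcal W_1^{q-2}$ and $\frac q2\mathcal W_0^{q-2}$.
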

\begin{lem}
\label{lem5.4}
Let $q\in (q_*(N,\lambda), 1)$. If $\rho$ is a solution of \eqref{fd2} satisfying (H2), then for $v=\frac{\rho}{\rho_{\infty}}$,
\[
\lim_{t \to \infty}  \Vert v(t) -1 \Vert_{L^\infty}=0.
\]
\end{lem}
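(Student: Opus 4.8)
The plan is to follow the same scheme that produced the convergence-without-rate result for the external-drift equation in Section~\ref{Sec3:fastdiffu2}, now using the mean-field entropy $\mathbb F$ from \eqref{free5} and its dissipation $\mathbb I$ from \eqref{free6} in place of $\mathcal F$ and $\mathcal I$. First I would record that, along any solution, $\frac{d}{dt}\mathbb F[\rho(t)] = -\mathbb I[\rho(t)] \le 0$, and that $\mathbb F$ is bounded below by $\mathbb F[\rho_\infty]$ by the reverse Hardy--Littlewood--Sobolev inequality. Consequently $t \mapsto \mathbb F[\rho(t)]$ is nonincreasing and convergent, so its dissipation is integrable in time; in particular $\mathbb F[\rho(\tau)] - \mathbb F[\rho(\tau+1)] = \int_0^1 \mathbb I[\rho(t+\tau)]\,dt \to 0$ as $\tau \to \infty$.

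Next I would exploit the uniform $C^k$ regularity lemma together with the two-sided bound $\rho_2 \le \rho(t) \le \rho_1$ coming from the comparison principle and (\textbf H2). These give enough compactness to extract, for the time shifts $v_\tau(t,x):=v(t+\tau,x)$, a sequence $\tau_n \to \infty$ along which $v_{\tau_n}$ and its derivatives converge, uniformly on compact sets, to a limit $v_\infty$; set $\bar\rho := \rho_\infty v_\infty$, which satisfies $\bar\rho \ge \rho_2 > 0$ everywhere. Applying Fatou's lemma to $\int_0^1 \mathbb I[\rho(t+\tau_n)]\,dt \to 0$ would then yield $\int_0^1 \mathbb I[\bar\rho(t)]\,dt = 0$, hence $\mathbb I[\bar\rho] = 0$. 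Because $\bar\rho$ is strictly positive, this forces $\frac{q}{1-q}\nabla \bar\rho^{q-1} = \nabla V_\lambda * \bar\rho$ on all of $\R^N$, i.e. $\bar\rho$ is a stationary solution of \eqref{fd2}.

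The step I expect to be the main obstacle is precisely the passage to the limit inside the nonlocal dissipation: unlike the external-drift case, where the drift $\nabla V_\lambda$ is a fixed function, here the drift $\nabla V_\lambda * \rho$ depends on the whole density and $V_\lambda$ grows like $|x|^\lambda$. To justify that $\nabla V_\lambda * \rho(\cdot + \tau_n) \to \nabla V_\lambda * \bar\rho$ and that Fatou applies to the full integrand, I would use the sandwich $\rho_2 \le \rho(t) \le \rho_1$ to dominate the convolution kernels by integrals against $\rho_1$; since the standing assumption $q > \tfrac{2N}{2N+\lambda}$ rules out any Dirac part and guarantees that $\rho_1$ has the decay needed for a finite $\lambda$-th moment, dominated convergence controls the nonlocal term uniformly on compacta.

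Finally, I would close the argument by uniqueness. By the reverse Hardy--Littlewood--Sobolev theorem the stationary solution is unique up to translation; since mass and each first moment are conserved along \eqref{fd2}, we have $\ird{\bar\rho} = 1$ and $\ird{x_i \bar\rho} = 0$, which pins down the translation and gives $\bar\rho = \rho_\infty$, i.e. $v_\infty = 1$. As the limit is independent of the subsequence, the whole family $v(t)$ converges to $1$ pointwise. To upgrade to $L^\infty$, I would note $|v(t,x)-1| \le (\rho_1 - \rho_2)/\rho_\infty$, which lies in $L^p(\R^N)$ with suitable decay, so dominated convergence gives $\Vert v(t)-1\Vert_{L^p} \to 0$; combining this with the uniform $C^1$ bound through the interpolation inequality $\Vert f\Vert_{L^\infty} \le \Vert f\Vert_{C^1}^{\theta}\Vert f\Vert_{L^p}^{1-\theta}$ yields the claimed $\Vert v(t)-1\Vert_{L^\infty} \to 0$.
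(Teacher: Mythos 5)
Your proposal is correct and follows essentially the same route as the paper, which proves this lemma by repeating the Section~\ref{Sec3:fastdiffu2} scheme (entropy dissipation, uniform $C^k$ compactness of time shifts, Fatou's lemma to force vanishing Fisher information, identification of the limit, then the $L^p$-to-$L^\infty$ interpolation upgrade). Your two additions --- dominating the nonlocal drift $\nabla V_\lambda * \rho$ via the sandwich $\rho_2 \le \rho \le \rho_1$ and moment bounds, and pinning the translation of the limiting stationary state by conservation of mass and first moments --- are exactly the details the paper leaves implicit when it says the proof is ``as the simple case,'' and they are handled correctly.
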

Remind that now $q>q_*(N,\lambda)>q_{\#}$, all the lemmas can be proved as the simple case of fast diffusion equation. So from now on, we focus now on the convergence with rate. The fact that $v(t)$ converges uniformly to $1$ as $t \to \infty$ allows us to improve the lower and upper bounds $\mathcal W_0$ and $\mathcal W_1$ for the function $v(t)$, at the price of waiting some time. For any $\epsilon> 0$ there exists a time $t_0 = t_0(\epsilon)$ such that
\[
1-\epsilon \le v(t, x) \le 1+\epsilon, \quad \forall(t, x) \in (t_0, \infty) \times \R^d.
\]
Recall that $\rho$ is the solution of \eqref{fd1}. Then $v=\frac{\rho}{\rho_{\infty}}$ satisfies
\begin{equation}
\label{changev1}
\begin{aligned}
v_t= \frac 1 {\rho_\infty}   \rho_t &= \frac 1 {\rho_\infty} \nabla(q \rho ^{q-1} \nabla \rho +\rho \nabla V_\lambda*\rho)   \\
&= \frac 1 {\rho_\infty} \nabla \left( q (\rho_\infty v)^{q-1} \nabla (\rho_\infty v) + \rho_\infty v\nabla V_\lambda*(\rho_\infty v)    \right) \\
& =  \frac 1 {\rho_\infty} \nabla \left[\rho_\infty v \left(\frac {q} {q-1} \rho_\infty^{q-1}\nabla (v^{q-1}) + \frac {q} {q-1}v^{q-1} \nabla(\rho_\infty^{q-1}) +\nabla V_\lambda*(\rho_\infty v)   \right)   \right] \\
& =  \frac 1 {\rho_\infty} \nabla \left[\rho_\infty  v \nabla\left(\frac {q} {q-1} \rho_\infty^{q-1}v^{q-1}  +  V_\lambda*(\rho_\infty v) \right) \right]
\end{aligned}
\end{equation}
the associated free energy  and Fisher information defined in \eqref{free5}, \eqref{free6} become
\begin{equation}
\label{free2}
\mathbb{F}[v]:=-\frac{1}{1-q}\ird{\rho_\infty^q [v^q -1 - q (v-1)] }+\frac 1 2\ire{\ird{ V_{\lambda}(x-y)\rho_\infty(x)g(x)\rho_\infty(y)g(y)}}
\end{equation}
we note here if $v=1$, then $\mathbb F[v] =0$, in other words, $\mathbb F[v] = \mathbb F[\rho|\rho_\infty]$, then we define
\begin{equation}
\label{fisher2}
\mathbb{I}[v]:=\ird{\rho_\infty v \left|\frac{q}{1-q}\nabla(\rho_\infty ^{q-1}(v^{q-1} -1))- \nabla V_\lambda*(\rho_\infty g)\right|^2}
\end{equation}
we similarly define the function
\begin{equation}
\label{weight2}
\mathcal  N_1(x) := \Delta(\rho_\infty^{q-1}) \mathcal M +\nabla  \mathcal M  \cdot \nabla(\rho_\infty^{q-1})
\end{equation}
define three functionals $\Psi_1(g), \Psi_2(g), \Psi_3(g)$ by
\[
\Psi_1(g) :=\ird{\rho_\infty^q g^2}
\]
\[
\Psi_2(g) := q^2\ird{\rho_\infty|\nabla(\rho_\infty^{q-1}g)|^2 \mathcal M}
\]
and
\[
\Psi_3(g) :=  \sum_{i=1}^N\left(\ird{x_i g(x)\rho_{\infty}(x) }\right)^2
\]
note that $\Psi_1(g), \Psi_2(g)$ are actually the similar forms as \eqref{functional1}, \eqref{functional2} from Section~\ref{Sec4:fastdiffu3}. 

 Suppose also that $\mathcal W_0 \le v\le \mathcal W_1$ then we have
\begin{lem}\label{lem5.5}
For some $\epsilon>0$ small, we have
\begin{equation}
\label{keyes2}
\ird{\rho_\infty \left|\nabla \left(\rho_\infty ^{q-1}\frac {(v^{q-1} -1)} {q-1}  - \nabla(\rho_\infty^{q-1}(v-1))   \right)\right|^2 \mathcal M} \le \epsilon \Psi_1(g) + \epsilon \Psi_2(g) 
\end{equation}
and $\epsilon>0$ can be arbitrarily small if $\mathcal W_0$, $\mathcal W_1$ is close enough to $1$.
\end{lem}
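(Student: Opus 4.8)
The plan is to isolate the genuinely nonlinear part of the diffusion term and show it is quadratically small near equilibrium. Set $\psi(v):=\frac{v^{q-1}-1}{q-1}-(v-1)$, so that (reading the inner bracket of \eqref{keyes2} as the difference of the nonlinear diffusion potential and its linearization) the left-hand side is $\ird{\rho_\infty|\nabla(\rho_\infty^{q-1}\psi(v))|^2\mathcal M}$. Since $\psi(1)=0$, $\psi'(v)=v^{q-2}-1$ vanishes at $v=1$, and $\psi''(v)=(q-2)v^{q-3}$ stays bounded on $[\mathcal W_0,\mathcal W_1]$ (an interval bounded away from $0$ and $\infty$), Taylor's theorem gives $|\psi(v)|\le C\,(v-1)^2$ and $|\psi'(v)|\le C\,|v-1|$ with $C$ depending only on $q,\mathcal W_0,\mathcal W_1$. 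Writing $g=v-1$ and $\delta:=\max(|\mathcal W_0-1|,|\mathcal W_1-1|)$, so that $|g|\le\delta$, these upgrade to $\psi(v)^2\le C\delta^2 g^2$ and $\psi'(v)^2\le C\delta^2$; this is where the smallness enters.

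Next I would expand the gradient by the product rule, $\nabla(\rho_\infty^{q-1}\psi(v))=\psi(v)\nabla(\rho_\infty^{q-1})+\rho_\infty^{q-1}\psi'(v)\nabla g$, and use $|a+b|^2\le 2|a|^2+2|b|^2$ to split the left-hand side into
\[
2\ird{\rho_\infty\psi(v)^2|\nabla(\rho_\infty^{q-1})|^2\mathcal M}+2\ird{\rho_\infty^{2q-1}\psi'(v)^2|\nabla g|^2\mathcal M}.
\]
By the bounds from the previous step, the first integral is $\le C\delta^2\ird{\rho_\infty g^2|\nabla(\rho_\infty^{q-1})|^2\mathcal M}$ and the second is $\le C\delta^2\ird{\rho_\infty^{2q-1}|\nabla g|^2\mathcal M}$.

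The remaining task is to bound each of these two weighted integrals by a fixed multiple of $\Psi_1(g)+\Psi_2(g)$, with no extra smallness. For this I would invoke the integration-by-parts identity of Lemma~\ref{lem4.1} in its mean-field form, i.e. with $N_{h_*},\mathcal M_1$ replaced by $\rho_\infty,\mathcal N_1$, applied with $\alpha(v)=g$:
\[
\ird{|\nabla(g\rho_\infty^{q-1})|^2\rho_\infty\mathcal M}=\ird{|\nabla g|^2\rho_\infty^{2q-1}\mathcal M}+\frac{1}{1-q}\ird{g^2|\nabla(\rho_\infty^{q-1})|^2\rho_\infty\mathcal M}-\ird{g^2\rho_\infty^q\mathcal N_1}.
\]
The left-hand side equals $q^{-2}\Psi_2(g)$; since $\mathcal N_1$ is uniformly bounded (just as $\mathcal M_1$ was in Section~\ref{Sec4:fastdiffu3}), the last term is $\le\|\mathcal N_1\|_\infty\Psi_1(g)$. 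Dropping the nonnegative term $\ird{|\nabla g|^2\rho_\infty^{2q-1}\mathcal M}$ gives $\ird{g^2|\nabla(\rho_\infty^{q-1})|^2\rho_\infty\mathcal M}\le(1-q)\left(q^{-2}\Psi_2(g)+\|\mathcal N_1\|_\infty\Psi_1(g)\right)$, while rearranging the same identity and dropping the (negative) middle term bounds $\ird{|\nabla g|^2\rho_\infty^{2q-1}\mathcal M}\le q^{-2}\Psi_2(g)+\|\mathcal N_1\|_\infty\Psi_1(g)$. Feeding both back yields the left-hand side of \eqref{keyes2} $\le C\delta^2(\Psi_1(g)+\Psi_2(g))$, and setting $\epsilon:=C\delta^2$, which tends to $0$ as $\mathcal W_0,\mathcal W_1\to1$, completes the proof.

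I expect the main obstacle to be the uniform boundedness of $\mathcal N_1$: unlike $\mathcal M_1$ in the external-drift case, where $N_{h_*}$ is explicit, here $\rho_\infty$ has no closed form for $\lambda\neq2$, so one must extract the needed bounds on $\Delta(\rho_\infty^{q-1})$ and $\nabla(\rho_\infty^{q-1})$ from the stationary relation $\frac{q}{1-q}\rho_\infty^{q-1}=V_\lambda*\rho_\infty+C$ together with the decay and regularity of $\rho_\infty$ established in \cite{dolbeault2018reverse}. For $\lambda=2$ this is immediate, since $\Delta(V_2*\rho_\infty)=N$ after centering, giving $\mathcal N_1\equiv\frac{N(1-q)}{q}$. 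A secondary point is justifying the integration by parts, which requires enough decay of $\rho_\infty$ at infinity to annihilate the boundary terms.
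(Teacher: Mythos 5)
Your proposal is correct and follows essentially the same route as the paper: both proofs isolate the nonlinear remainder $\psi(v)=h_q(v)-h_2(v)$, exploit its quadratic smallness on $[\mathcal W_0,\mathcal W_1]$ near $v=1$, and close the estimate through the integration-by-parts identity of Lemma~\ref{lem4.1} in its mean-field form together with a uniform bound on $\mathcal N_1$. The only difference is in execution: the paper applies the identity of Lemma~\ref{lem4.1} directly to $\beta=h_q-h_2$ and compares each resulting term with the corresponding term for $\beta=h_2$ (using $|h_q'-h_2'|^2\le\epsilon|h_2'|^2$ and $|h_q-h_2|^2\le\epsilon|h_2|^2$), whereas you expand $\nabla\bigl(\rho_\infty^{q-1}\psi(v)\bigr)$ by the product rule, split with $|a+b|^2\le 2|a|^2+2|b|^2$, and invoke the identity only for $\beta=h_2$; both versions land on $\epsilon\,(\Psi_1(g)+\Psi_2(g))$ with $\epsilon\to 0$ as $\mathcal W_0,\mathcal W_1\to 1$, and your variant is, if anything, slightly more elementary. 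The one ingredient you defer, the uniform bound $|\mathcal N_1|\le\Theta(N,\lambda)$, is in fact the first half of the paper's own proof of this lemma, and it is obtained exactly as you anticipate: for $\lambda=2$ one has $\mathcal N_1=\frac{N(1-q)}{q}$ explicitly, and for $\lambda>2$ one writes $\frac{q}{1-q}\mathcal N_1=\Delta(V_\lambda*\rho_\infty)\,\mathcal M+\nabla(V_\lambda*\rho_\infty)\cdot\nabla\mathcal M$ from the stationary relation and bounds both terms using $\ird{\rho_\infty}=1$ and moments of $\rho_\infty$, the weight $\mathcal M=(1+|x|^{\lambda-2})^{-1}$ being designed precisely to absorb the $|x|^{\lambda-2}$ growth of $\Delta V_\lambda*\rho_\infty$ and of $\nabla V_\lambda*\rho_\infty\cdot\nabla\mathcal M$. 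So to make your write-up complete you would only need to carry out that computation, which poses no difficulty for fixed $q$.
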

\begin{proof}

We first show that there exists a constant $\Theta(N,\lambda)>0$, such that for any $x\in\mathbb R^N$, $|\mathcal N_1(x)| \le \Theta$. 
The case that $\lambda=2$ is simple. From \eqref{case2}, we have
\[
\mathcal N_1(x)=\Delta(\rho_{\infty}(x)^{q-1})=\frac{1-q}{2q}\Delta(|x|^2)=\frac{N(1-q)}{q}<2
\]
for $\lambda>2$, notice that 
\[
\frac{q}{1-q}\mathcal N_1(x)=\Delta V_{\lambda}*\rho_{\infty}(x)\cdot \mathcal M(x)+\nabla(V_{\lambda}*\rho_{\infty})(x)\nabla\mathcal M(x)
\]
remind that
\[
[\nabla\mathcal M(x)]_i=\frac{(2-\lambda)x_i|x|^{\lambda-4}}{(1+|x|^{\lambda})^2}
\]
using $\ird{\rho_{\infty}}=1$, we have
\[
|\Delta V_{\lambda}*\rho_{\infty}(x)\cdot \mathcal M(x)|\le \Delta V_{\lambda}(x)\mathcal M(x)=\frac{(\lambda+N-2)|x|^{\lambda-2}}{1+|x|^{\lambda-2}}\le\lambda+N-2
\]
and
\[
|\nabla(V_{\lambda}*\rho_{\infty})(x)\nabla\mathcal M(x)|\le \frac{(\lambda-2)|x|^{2\lambda-4}}{(1+|x|^{\lambda-2})^2}\le \lambda-2
\]
so 
\[
|\mathcal N_1(x)|\le\frac{(1-q)(2\lambda+N-4)}{q}<\frac{\lambda(2\lambda+N-4)}{2N}
\]
next, by Lemma \ref{lem4.1}, for any function $\beta$ we have
\begin{equation*}
\begin{aligned}
&\ird{\rho_\infty \left|\nabla \left(\rho_\infty^{q-1} \beta (v)   \right)\right|^2 \mathcal M }   
\\
=&\ird{|\beta'(v)|^2|\nabla v|^2 \rho_\infty^{2q-1}   \mathcal M }+ \frac {1} {1-q}\ird{|\beta(v)|^2 |\nabla ( \rho_\infty^{q-1})|^2\rho_\infty \mathcal M}-\ird{ |\beta(v)|^2  {\rho_\infty^{q} } \mathcal N_1}
\end{aligned}
\end{equation*}
take $\beta= h_q(v) -h_2(v)$ and $\beta=h_2(v)$ separately, and recall that 
$h_k(v):=\frac{v^{k-1}-1}{k-1}$, we have
\begin{equation*}
\begin{aligned}
&\ird{\rho_\infty \left|\nabla \left(\rho_\infty^{q-1} h_q(v) -h_2(v)   \right)\right|^2 \mathcal M }   
\\
\le&\ird{|h'_q(v) -h'_2(v) |^2|\nabla v|^2 \rho_\infty^{2q-1}  \mathcal M }+ \frac {1} {1-q}\ird{|h_q(v) -h_2(v) |^2 |\nabla ( \rho_\infty^{q-1})|^2\rho_\infty  \mathcal M} \\
&-\ird{ |h_q(v) -h_2(v) |^2  {\rho_\infty^{q} } \mathcal N_1}\\
\le&\epsilon \ird{|h'_2(v) |^2|\nabla v|^2 \rho_\infty^{2q-1} \mathcal{M} }+\epsilon \frac {1} {1-q}\ird{|h_2(v) |^2 |\nabla ( \rho_\infty^{q-1})|^2\rho_\infty  \mathcal{M} } \\
&-\ird{ |h_q(v) -h_2(v) |^2  {\rho_\infty^{q} } \mathcal N_1}\\
\le&\epsilon \ird{\rho_\infty \left|\nabla \left(\rho_\infty^{q-1} h_2(v)   \right)\right|^2 \mathcal M } + \epsilon\ird{ |h_2(v)|^2  {\rho_\infty^{q} } |\mathcal N_1| }\le \epsilon \Psi_1(g) + \epsilon \Psi_2(g) 
\end{aligned}
\end{equation*}
the lemma is thus proved.
\end{proof}

\section{Mean-field type equation: analysis of $\lambda =2$}
\label{Sec6:nonlidiffu2}
We first study the quadratic forms associated with the expansion of the $\mathbb F$ and $\mathbb I$ around $\rho_{\infty}$. For a smooth perturbation $g$ 
of $\rho_{\infty}$ such that $\ird{g\rho_{\infty}(x)} =0$, define
\begin{equation}
\label{qua21}
\begin{aligned}
\mathbb Q_1[g]:&=\lim_{\varepsilon\to 0}\frac{2}{\varepsilon^2}(\mathbb F[\rho_{\infty}(1+\varepsilon g)]-\mathbb F[\rho_{\infty}])\\
&=q\ird{\rho_\infty^q g^2}+\ire{\ird{ V_{\lambda}(x-y)\rho_\infty(x) g(x)\rho_\infty(y) g(y)}}
\end{aligned}
\end{equation}
and
\begin{equation}
\label{qua22}
\mathbb Q_2[g]:=\lim_{\varepsilon\to 0}\frac{1}{\varepsilon^2}\mathbb I[\rho_{\infty}(1+\varepsilon g)]=\ird{\rho_ \infty \left| q\nabla(\rho_\infty^{q-1}g)-\nabla V_{\lambda}*(\rho_\infty g))\right|^2}
\end{equation}
in this section, we focus on the case $\lambda=2, \frac{N}{N+2}<q<1$ and prove the result about large time asymptotic behavior of equation \eqref{fd2}.
\subsection{The coercivity result}We first prove that
\begin{lem}
When $\lambda=2$, for $\mathbb Q_1[g]$ and $\mathbb Q_2[g]$ we have
\[
\mathbb Q_1[g] = \Psi_1(g)-\Psi_3(g)
\]
and
\[
\mathbb Q_2[g] = \Psi_2(g)+ 3\Psi_3(g).
\]
\end{lem}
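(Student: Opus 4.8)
The plan is to evaluate the two quadratic forms separately, in each case expanding the expression into elementary integrals and exploiting the zero-mean constraint $\ird{g\rho_\infty}=0$ together with the explicit profile of $\rho_\infty$ available when $\lambda=2$. Throughout I would write $a_i:=\ird{x_i\,g\,\rho_\infty}$, so that $\Psi_3(g)=\sum_{i=1}^N a_i^2=|a|^2$ where $a=(a_1,\dots,a_N)$.

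For $\mathbb Q_1[g]$ I would insert $V_2(x-y)=\tfrac12|x-y|^2=\tfrac12\big(|x|^2-2\,x\cdot y+|y|^2\big)$ into the double integral. The two pure terms $\tfrac12|x|^2$ and $\tfrac12|y|^2$ each factor as a product of one integral against $\ird{g\rho_\infty}=0$, and therefore vanish; the surviving cross term $-\ire{\ird{(x\cdot y)\,\rho_\infty(x)g(x)\rho_\infty(y)g(y)}}$ factorizes as $-\sum_i a_i^2=-\Psi_3(g)$. Together with the diffusion term $q\ird{\rho_\infty^q g^2}=q\Psi_1(g)$, this yields the first identity.

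For $\mathbb Q_2[g]$ the decisive observation is that $\nabla V_2(x)=x$, so the convolution reduces to $\nabla V_2*(\rho_\infty g)(x)=\ire{(x-y)\rho_\infty(y)g(y)}=-a$, again using $\ird{g\rho_\infty}=0$; note this is a constant vector, independent of $x$. I would then expand the square $|q\nabla(\rho_\infty^{q-1}g)+a|^2$ under the integral. The leading term reproduces $\Psi_2(g)$, since $\mathcal M\equiv 1$ for $\lambda=2$, and the constant term gives $|a|^2\ird{\rho_\infty}=\Psi_3(g)$ by the unit-mass normalization.

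The main obstacle, and the source of the coefficient $3$, is the cross term $2q\,a\cdot\ird{\rho_\infty\nabla(\rho_\infty^{q-1}g)}$. I would integrate by parts to write $\ird{\rho_\infty\nabla(\rho_\infty^{q-1}g)}=-\ird{\rho_\infty^{q-1}g\,\nabla\rho_\infty}$, then differentiate the stationary relation $\tfrac{q}{1-q}\rho_\infty^{q-1}=\tfrac12|x|^2+C$ to obtain $\nabla\rho_\infty=-\tfrac1q\,\rho_\infty^{2-q}\,x$. Substituting gives $\ird{\rho_\infty\nabla(\rho_\infty^{q-1}g)}=\tfrac1q\,a$, so the cross term equals $2|a|^2=2\Psi_3(g)$; adding the three pieces produces $\mathbb Q_2[g]=\Psi_2(g)+3\Psi_3(g)$. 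The only point requiring care is the absence of boundary terms in the integration by parts, which follows from the decay of $\rho_\infty$ and the uniform bounds on $g$ supplied by the regularity lemma.
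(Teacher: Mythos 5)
Your proof is correct. The $\mathbb Q_1$ part is the same argument as the paper's: split $\tfrac12|x-y|^2=\tfrac12|x|^2-x\cdot y+\tfrac12|y|^2$, use $\ird{g\rho_\infty}=0$ to kill the two pure terms, and identify the cross term with $-\Psi_3(g)$. For $\mathbb Q_2$ you take a genuinely cleaner route. The paper expands $\left|q\nabla(\rho_\infty^{q-1}g)-\nabla V_2*(g\rho_\infty)\right|^2$ into three terms first and then processes the middle term by integration by parts: this produces a piece containing $\Delta V_2*(g\rho_\infty)$, which vanishes by zero mass, plus a piece that is rewritten through the stationary relation $\nabla(\rho_\infty^{q-1})=\frac{1-q}{q}\nabla V_2$ as a double integral evaluating to $2\Psi_3(g)$; the square of the convolution is then computed separately and gives $\Psi_3(g)$. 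You instead observe at the very start that $\nabla V_2*(g\rho_\infty)\equiv -a$ is a constant vector (this is exactly where $\lambda=2$ and the zero-mean condition enter), after which everything is a single integral: the constant term gives $|a|^2\ird{\rho_\infty}=\Psi_3(g)$ by unit mass, and the cross term reduces, via one integration by parts and the differentiated stationary relation $\nabla\rho_\infty=-\tfrac1q\rho_\infty^{2-q}\,x$, to $2|a|^2=2\Psi_3(g)$. Both routes land on $\Psi_2(g)+3\Psi_3(g)$; yours buys brevity, dispensing with the $\Delta V_2$ term and all double-integral manipulations, while the paper's expansion-first organization is the one that survives when the convolution is no longer constant (as in its treatment of general $\lambda>2$). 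One shared blemish worth flagging: with the paper's definition \eqref{qua21} of $\mathbb Q_1$ and its definition of $\Psi_1$, both computations actually yield $\mathbb Q_1[g]=q\,\Psi_1(g)-\Psi_3(g)$ rather than $\Psi_1(g)-\Psi_3(g)$; this missing factor of $q$ is an inconsistency in the paper's own notation, which your write-up (like the paper's proof) silently absorbs. It is harmless downstream, where only equivalences up to constants are used, but it should be stated rather than glossed over.
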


\begin{proof} Recall that
\[
\rho = \rho_\infty v = \rho_\infty (1+g) = \rho_\infty + j
\]
we have
\[
\ird{ g\rho_{\infty}} =0.
\]
For $\lambda=2$, it's easily seen that
\[
\ire{\ird{ V_{\lambda}(x-y)\rho_\infty(x) g(x)\rho_\infty(y) g(y)}} = \frac 1 2\ire{\ird{ |x-y|^2 g\rho_{\infty}(x) g\rho_{\infty}(y)}}  =-\Psi_3( g) 
\]
so the $\mathbb Q_1$ term follows. For the $ \mathbb Q_2[g]$ term
\begin{equation*}
\begin{aligned}
\mathbb Q_2[g]:=&\ird{\rho_\infty\left|q\nabla(\rho_\infty^{q-1}g) -\nabla V_{\lambda}*(g\rho_{\infty})\right|^2}\\
=&q^2\ird{\rho_\infty|\nabla(\rho_\infty^{q-1}g)|^2}-2q\ird{\rho_\infty\nabla(\rho_\infty^{q-1}g)\cdot\nabla V_{\lambda}*(g\rho_{\infty})}\\
&+\ird{\rho_\infty|\nabla V_{\lambda}*(g\rho_{\infty})|^2}
\end{aligned}
\end{equation*}
for the second  term, by integration by parts
\begin{equation*}
\begin{aligned}
&-2q\ird{\rho_\infty\nabla(\rho_\infty^{q-1}g)\cdot\nabla V_{\lambda}*(g\rho_{\infty})} \\
&= 2q\ird { \rho_\infty^{q-1} g \nabla \rho_\infty \cdot \nabla V_{\lambda}*(g\rho_{\infty}) } + 2q\ird {\rho_\infty^{q} g \Delta V_{\lambda}*(g\rho_{\infty})}
\end{aligned}
\end{equation*}
recall that
\[
\nabla (\rho_\infty^{q-1})=  \frac {1-q } {q }\nabla V_\lambda, \quad \rho_\infty \nabla(\rho_\infty^{q-1}) = (q-1) \rho_\infty^{q-1} \nabla \rho_\infty
\]
we have
\begin{equation*}
\begin{aligned}
2q\ird { \rho_\infty^{q-1} g\nabla \rho_\infty \cdot \nabla V_{\lambda}*(g\rho_{\infty}) } =& 2\ird {\frac {q} {q-1} g \rho_\infty \nabla ( \rho_\infty^{q-1})  \cdot \nabla V_\lambda * (g\rho_{\infty}) }\\
=& - 2\ird {  g\rho_{\infty} \nabla V_\lambda  \cdot \nabla V_\lambda * (g\rho_{\infty}) } 
\end{aligned}
\end{equation*}
and we compute
\begin{equation*}
\begin{aligned}
-\ird{g\rho_{\infty}\nabla V_\lambda  \cdot \nabla V_\lambda*(g\rho_{\infty})}&=-\sum_{i=1}^N\ird{\ire{g(x)\rho_{\infty}(x)g(y)\rho_{\infty}(y)x_i(x_i-y_i)}}\\
&=\sum_{i=1}^N\ird{\ire{g(x)\rho_{\infty}(x)g(y)\rho_{\infty}(y)x_i y_i}}\\
&=\sum_{i=1}^N\left(\ird{x_i g(x)\rho_{\infty}(x)}\right)^2
\end{aligned}
\end{equation*}
since the mass of $g\rho_{\infty}$ is 0, we have
\[
2q\ird {\rho_\infty^{q}g \Delta V_{\lambda}*(g\rho_{\infty})} = 0
\]
for the third term we have
\[
\ird{\rho_\infty|\nabla V_{\lambda}*(g\rho_{\infty})|^2}=\ird{\rho_\infty\sum_{i=1}^N\left(\ire{(x_i-y_i)g(y)\rho_{\infty}(y)}\right)^2}=\sum_{i=1}^N\left(\ird{x_i g(x)\rho_{\infty}(x)}\right)^2
\]
we conclude by gathering all the terms.
\end{proof}

\subsection{Large time asymptotic behavior (proof of Theorem~\ref{thm1.3}).}

\begin{cor}
For $\lambda=2$, $q\in (\frac{N}{N+2}, 1)$, there exists $\gamma >0$, such that for $v(t)$ as the solution of \eqref{changev1},
\[
\mathbb{F} [v] \le \gamma \mathbb{I}[v].
\]
\end{cor}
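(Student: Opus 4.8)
The plan is to reproduce, in the mean-field setting, the linearization-plus-coercivity scheme already used for the external drift equation (the corollary preceding Corollary~\ref{cor4.5}), feeding in the two identities just proved together with the Hardy--Poincar\'e inequality of Lemma~\ref{lemhp}. \emph{Step 1 (quadratic spectral gap).} I first show $\mathbb Q_1[g]\le \gamma_0\,\mathbb Q_2[g]$ for some $\gamma_0>0$. By the previous lemma $\mathbb Q_1[g]=\Psi_1(g)-\Psi_3(g)\le \Psi_1(g)$ because $\Psi_3(g)\ge 0$, while $\mathbb Q_2[g]=\Psi_2(g)+3\Psi_3(g)\ge \Psi_2(g)$. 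Since $\lambda=2$ we have $\mathcal M\equiv 1$, and by \eqref{case2} $\rho_\infty$ is a Barenblatt profile of exactly the same form as $N_{h_*}$; setting $f:=\rho_\infty^{q-1}g$, the functionals $\Psi_1(g)$ and $\Psi_2(g)$ coincide, up to the explicit constants, with $\Phi_1[f]$ and $\Phi_2[f]$, and the mass constraint $\ird{g\rho_\infty}=0$ is precisely the admissibility condition $\ird{f\rho_\infty^{2-q}}=0$ of Lemma~\ref{lemhp}. Hence $\Psi_1(g)\le C_{q,N,\lambda}\,\Psi_2(g)$, so that $\mathbb Q_1[g]\le \Psi_1(g)\le C_{q,N,\lambda}\,\Psi_2(g)\le C_{q,N,\lambda}\,\mathbb Q_2[g]$. (The center-of-mass invariance $\frac{d}{dt}\ird{x_i\rho}=0$ with $\ird{x_i\rho_0}=\ird{x_i\rho_\infty}=0$ actually forces $\Psi_3(g)=0$ along the flow, but this is not needed.)

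\emph{Step 2 (from quadratic forms to the nonlinear functionals).} By Lemma~\ref{lem5.4}, $v=\rho/\rho_\infty\to 1$ uniformly, so for any $\epsilon>0$ there is a time $t_0$ beyond which $1-\epsilon\le v\le 1+\epsilon$ and $\mathcal W_0,\mathcal W_1$ are as close to $1$ as we wish; I argue for $t\ge t_0$. For the entropy, the interaction part of $\mathbb F[v]$ in \eqref{free2} is, for $\lambda=2$, exactly the quadratic expression $\tfrac12\ire{\ird{V_2(x-y)\rho_\infty(x) g(x)\,\rho_\infty(y) g(y)}}=-\tfrac12\Psi_3(g)\le 0$, so $\mathbb F[v]$ is bounded above by its diffusion part, which Lemma~\ref{lem5.3} controls by $\tfrac12\mathcal W_0^{q-2}\Psi_1(g)$; thus $\mathbb F[v]\le C_1\,\Psi_1(g)$ with $C_1$ close to $\tfrac12$. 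For the Fisher information I expand \eqref{fisher2} around $\mathbb Q_2[g]$: writing its diffusion vector field as $\tfrac{q}{1-q}\nabla(\rho_\infty^{q-1}(v^{q-1}-1))=-q\nabla(\rho_\infty^{q-1}h_q(v))$ and the linear one as $-q\nabla(\rho_\infty^{q-1}h_2(v))$, recall $g=h_2(v)$, Lemma~\ref{lem5.5} bounds the weighted $L^2(\rho_\infty)$-norm of their difference by $\epsilon(\Psi_1(g)+\Psi_2(g))$. Replacing the weight $\rho_\infty v$ by $\rho_\infty$ at the cost of a factor $1-\epsilon$ and using a Young inequality $|A|^2\ge(1-\delta)|B|^2-C_\delta|A-B|^2$, I obtain $\mathbb I[v]\ge (1-\epsilon)(1-\delta)\,\mathbb Q_2[g]-C\epsilon(\Psi_1(g)+\Psi_2(g))$. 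Since Step~1 gives $\Psi_1(g)+\Psi_2(g)\le (1+C_{q,N,\lambda})\,\mathbb Q_2[g]$, choosing $\epsilon,\delta$ small yields $\mathbb I[v]\ge \tfrac12\,\mathbb Q_2[g]$.

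Chaining the three bounds gives, for $t\ge t_0$,
\[
\mathbb F[v]\le C_1\,\Psi_1(g)\le C_1 C_{q,N,\lambda}\,\mathbb Q_2[g]\le 2C_1 C_{q,N,\lambda}\,\mathbb I[v]=:\gamma\,\mathbb I[v],
\]
and, exactly as in the external-drift case, this entropy--entropy-production inequality (with Lemma~\ref{lem5.4} absorbing the bounded interval $[0,t_0]$) is what later produces the exponential decay of Theorem~\ref{thm1.3}. The main obstacle is the lower bound on $\mathbb I[v]$ in Step~2: one must absorb simultaneously the nonlinear defect between $h_q(v)$ and $h_2(v)$, which is exactly what Lemma~\ref{lem5.5} is designed to control, and the cross terms created by the $v$-weight, into an arbitrarily small multiple of $\mathbb Q_2[g]$, and this is possible only because convergence without rate makes $v$ uniformly close to $1$.
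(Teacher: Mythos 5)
Your proposal is correct and takes essentially the same route as the paper's proof: both bound $\mathbb F[v]$ by a multiple of $\Psi_1(g)$ via Lemma~\ref{lem5.3} after dropping the nonpositive interaction term $-\tfrac12\Psi_3(g)$, bound $\mathbb I[v]$ from below by a fixed fraction of $\mathbb Q_2[g]\ge\Psi_2(g)$ using Lemma~\ref{lem5.5} to absorb the nonlinear defect (made small by the uniform convergence $v\to 1$ from Lemma~\ref{lem5.4}), and close the chain with the Hardy--Poincar\'e inequality applied to $g\rho_\infty^{q-1}$ under the constraint $\ird{g\rho_\infty}=0$. The only cosmetic differences are your Young-inequality splitting with parameters $\epsilon,\delta$ in place of the paper's factor-of-two splitting $|a+b|^2\le 2|a|^2+2|b|^2$, and your isolating the preliminary bound $\mathbb Q_1[g]\le\gamma_0\,\mathbb Q_2[g]$ as a separate step, which the paper folds directly into the final chain of inequalities.
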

\begin{proof}
Recall
\[
\rho = \rho_\infty v = \rho_\infty (1+g) = \rho_\infty + j
\]
we prove by talking on the relationship between $\mathbb{Q}_1$ and $\mathbb{F}$ and the relationship between $\mathbb{Q}_2$ and $\mathbb{I}$. by Lemma \ref{lem3.2} above we have
\[
\frac q 2 \mathcal W_1^{q-2} \ird{|v-1|^2 \rho_\infty^q }  \le -\frac{1}{1-q}\ird{\rho_\infty^q [v^q -1 - q (v -1) ] } \le \frac q 2 
\mathcal W_0^{q-2} \ird{ |v-1|^2 \rho_\infty^q }
\]
which implies
\[
\mathbb{F}[v] \le2\Psi_1(g)-\frac 1 2\Psi_3(g) 
\]
since 
\[
|a+b|^2 \le2 |a|^2 + 2|b|^2
\]
we have
\[
\mathbb Q_2[g] \le 2\mathbb{I} [v]  + 2q^2\ird{\rho_\infty \left|\nabla \left(\rho_\infty ^{q-1}\frac {(v^{q-1} -1)} {q-1}  - \nabla(\rho_\infty^{q-1}(v-1))   \right)\right|^2 }
\]
and since
\[
\ird{(g\rho_\infty^{q-1}) \rho_{\infty}^{2-q}} = \ird{g \rho_\infty}=0
\]
so we use the Hardy-Poincar\'e inequality (\cite{{blanchet2009asymptotics}}, Appendix A, Theorem 1) on $g\rho_\infty^{q-1}$ to have
\[
\Psi_2(g) \ge\mathcal C_{q, N,2} \Psi_1(g) 
\]
here 
\[
\mathcal C_{q,N,2}=\frac{(N-4-q(N-2))^2}{8q(1-q)}
\]
then by Lemma \ref{lem5.5} we have
\[
\mathbb{I}[v] \ge \frac 1 2 \mathbb Q_2[g] - \epsilon \Psi_2(g)  - \epsilon \Psi_1(g) \ge \frac 1 4 \Psi_2[g]\ge \frac {\mathcal C_{q, N,2}} 4\Psi_1[g] \ge \frac {\mathcal C_{q, N,2}} 8\mathbb F[v]
\]
if we take $\epsilon >0$ small enough. 
\end{proof}

Theorem~\ref{thm1.3} is the direct result by using Gr\"onwall inequality.

\section{Mean-field type equation: analysis of general $\lambda> 2$}
\label{Sec7large}
In this section, we consider the general case $\lambda>2$. We will prove that for $q$ close enough to 1, there exists the large time asymptotic behavior, which 
finishes the proof of Theorem~\ref{thm1.4}.
\label{Sec8:nonlidiffu4}
\subsection{Some important propositions of the stationary solution}
Remind that the stationary $\rho_{\infty}$ satisfies
\begin{equation}
\label{remindsta}
\frac{q}{1-q}\rho_{\infty}^{q-1}=\frac{1}{\lambda}|x|^{\lambda}*\rho_{\infty}+C
\end{equation}
Now we prove some properties with respect to the estimate of  $\rho_\infty$.
\begin{lem}\label{L71}
(i)$\ird{|x|^{\lambda}\rho_{\infty}}$ is uniformly bounded for $q\in (\frac{2N}{2N+\lambda},1)$.\quad (ii)$\lim\limits_{q\to 1_{-}}C(q)(1-q)=1$.
\end{lem}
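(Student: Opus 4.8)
The plan is to extract two exact relations for $\rho_\infty$ and then reduce both statements to the single analytic fact that $\ird{\rho_\infty^q}\to1$ as $q\to1^-$. First I would record a \emph{virial identity}: since $\rho_\infty$ minimizes $\mathbb F$ under the mass constraint and the dilations $\rho\mapsto\beta^N\rho(\beta\,\cdot)$ preserve mass, the map $\beta\mapsto\mathbb F[\beta^N\rho_\infty(\beta\,\cdot)]$ is stationary at $\beta=1$; differentiating the scaling formula for $\mathbb F$ recalled in the introduction yields
\[
I_\lambda[\rho_\infty]=2N\ird{\rho_\infty^q}.
\]
Second, integrating the Euler--Lagrange equation \eqref{remindsta} against $\rho_\infty$ and using $\ird{\rho_\infty}=1$ gives
\[
\frac{q}{1-q}\ird{\rho_\infty^q}=\frac1\lambda\,I_\lambda[\rho_\infty]+C,
\]
so that, after multiplying by $(1-q)$ and inserting the virial identity,
\[
(1-q)\,C=\ird{\rho_\infty^q}\left(q-\frac{2N(1-q)}{\lambda}\right).
\]
Because the bracket tends to $1$ as $q\to1^-$ (the $(1-q)$ factor kills the second term regardless of its coefficient), statement (ii) is equivalent to $\lim_{q\to1^-}\ird{\rho_\infty^q}=1$.

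For statement (i) I would bound the moment by the interaction energy using convexity. Since $\rho_\infty$ is a probability density with vanishing barycentre and $y\mapsto|x-y|^\lambda$ is convex for $\lambda\ge1$, Jensen's inequality gives $\ire{|x-y|^\lambda\rho_\infty(y)}\ge|x|^\lambda$ for every $x$, whence
\[
\ird{|x|^\lambda\rho_\infty}\le\ird{\Big(\ire{|x-y|^\lambda\rho_\infty(y)}\Big)\rho_\infty(x)}=I_\lambda[\rho_\infty]=2N\ird{\rho_\infty^q}.
\]
Thus (i) follows once $\ird{\rho_\infty^q}$ is bounded uniformly for $q\in(\tfrac{2N}{2N+\lambda},1)$. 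To obtain this I would use that $\rho_\infty$ saturates the reverse Hardy--Littlewood--Sobolev inequality \eqref{reversehls1}: with $\ird{\rho_\infty}=1$ equality reads $I_\lambda[\rho_\infty]=C_{N,\lambda,q}\big(\ird{\rho_\infty^q}\big)^{(2-\alpha)/q}$, and since $\alpha\le0$ on this interval one has $(2-\alpha)/q>1$. Combining with the virial identity determines $\ird{\rho_\infty^q}$ explicitly as a negative power of $C_{N,\lambda,q}/(2N)$, so uniform boundedness reduces to the continuity of $q\mapsto C_{N,\lambda,q}$ on the open interval together with control of its behaviour at the two endpoints.

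For statement (ii) it then remains to prove $\ird{\rho_\infty^q}\to1$. Writing $\ird{\rho_\infty^q}-1=\ird{\rho_\infty(\rho_\infty^{q-1}-1)}$ and noting $\rho_\infty^{q-1}\to1$ pointwise, the natural route is a dominated-convergence argument. Here I would exploit the pointwise domination
\[
\rho_\infty(x)\le\left[\frac{1-q}{q}\Big(\tfrac1\lambda|x|^\lambda+C\Big)\right]^{\frac{1}{q-1}},
\]
which comes from applying the same Jensen bound $\tfrac1\lambda|x|^\lambda*\rho_\infty\ge\tfrac1\lambda|x|^\lambda$ to \eqref{remindsta}, and use the uniform moment bound from (i) to dominate $\rho_\infty^q$ near $q=1$ and to rule out escape of mass to infinity; a matching lower bound $\liminf_{q\to1^-}\ird{\rho_\infty^q}\ge1$ comes for free by testing the minimality $\mathbb F[\rho_\infty]\le\mathbb F[\bar\rho]$ against a fixed smooth probability density $\bar\rho$.

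The hard part is precisely the uniform control underlying both reductions: establishing uniform tail and moment estimates on $\rho_\infty$ as $q\to1^-$ so that $\ird{\rho_\infty^q}\to1$ rather than to some other limit, and bounding $\ird{\rho_\infty^q}$ near the endpoints $q=\tfrac{2N}{2N+\lambda}$ and $q=1$, where the behaviour of the optimal constant $C_{N,\lambda,q}$ and the threshold for the appearance of a Dirac component in $\rho_\infty$ come into play. Everything else is the algebra of the two identities above.
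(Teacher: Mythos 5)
Your two identities are exactly the ones the paper uses (its \eqref{eqd1} and \eqref{eqd2}: the virial relation $I_\lambda[\rho_\infty]=2N\ird{\rho_\infty^q}$ and the integrated Euler--Lagrange equation), your Jensen bound $\frac1\lambda|x|^\lambda*\rho_\infty\ge\frac1\lambda|x|^\lambda$ is the paper's first estimate in \eqref{est1}, and the algebraic reduction $(1-q)C=\ird{\rho_\infty^q}\bigl(q-\tfrac{2N(1-q)}{\lambda}\bigr)$ is correct. But the proposal stops precisely where the lemma's actual content begins. Both of your reductions terminate in two unproven claims: that $\ird{\rho_\infty^q}$ is uniformly bounded (for (i)) and that $\ird{\rho_\infty^q}\to1$ as $q\to1^-$ (for (ii)). For the first you invoke ``continuity of $q\mapsto C_{N,\lambda,q}$ together with control of its behaviour at the two endpoints''; no such control of the optimal reverse HLS constant is available (the paper never establishes any, and a lower bound on $C_{N,\lambda,q}$ is essentially as hard as the quantitative inequality itself), so this is a deferral, not a proof. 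For the second, your dominated-convergence argument is circular: the proposed dominating function $\bigl[\tfrac{1-q}{q}(\tfrac1\lambda|x|^\lambda+C)\bigr]^{1/(q-1)}$ depends on $q$ through the very quantity $C=C(q)$ whose asymptotics you are trying to determine, and the family $\rho_\infty=\rho_\infty^{(q)}$ itself varies with $q$, so ``$\rho_\infty^{q-1}\to1$ pointwise'' has no meaning without uniform-in-$q$ two-sided bounds on $\rho_\infty$. (Your minimality argument for $\liminf_{q\to1^-}\ird{\rho_\infty^q}\ge1$ does work; it is the upper bound that is the obstruction, and minimality gives the inequality in the wrong direction there.)

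What the paper does to close exactly this gap is a self-contained quantitative computation that your sketch is missing: from the Jensen-type estimate it gets the pointwise upper bound $\rho_\infty<\bigl(\tfrac{1-q}{q}\bigr)^{\frac1{q-1}}\bigl(\tfrac{|x|^\lambda}{\lambda}+C\bigr)^{\frac1{q-1}}$, integrates it against the mass constraint $\ird{\rho_\infty}=1$ using Beta and Gamma function asymptotics to obtain $\limsup_{q\to1^-}C(1-q)\le1$; this upper bound on $C(1-q)$ fed into your/its two identities (plus the rearrangement inequality, playing the role of your Jensen step) yields the uniform moment bound of (i); the moment bound then makes the error constants $C_0,C_1$ in the Taylor expansion of $|x|^\lambda*\rho_\infty$ uniform in $q$, which gives a matching pointwise \emph{lower} bound on $\rho_\infty$, and integrating that against the mass constraint gives $\liminf_{q\to1^-}C(1-q)\ge1$. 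Note the order of logic: the upper bound on $C(1-q)$ comes \emph{first} and is what makes the moment bound and everything downstream possible; in your plan the moment bound and the limit each presuppose the other (or presuppose the HLS constant asymptotics). To repair your proof you would need to supply the Beta-function computation, at which point you would have reproduced the paper's argument.
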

\begin{proof}
For the proof of part (i), we just need to consider the case when $q\to 1_{-}$. We first recall the theory of Gamma function and Beta function. Remind that
\begin{equation}
\label{betaf}
B(p, q):= \int_0^\infty \frac {x^{p-1}} {(1+x)^{p+q} } dx,\quad  \Gamma(s):=\int_0^\infty {x^{s-1}}e^{-x} dx
\end{equation}
and 
\[B(p,q)=\frac{\Gamma(p)\Gamma(q)}{\Gamma(p+q)}\]
by mean-value theorem, we have
\[
f(x+h) = f(x) + h \cdot \int_0^1 \nabla f (x+t h) dt= f(x) + \nabla f (x) \cdot h + \int_0^1 \int_0^1 t h \cdot \nabla^2 f(x+t s h) \cdot h ds dt 
\]
and 
\[
\int_0^1 \int_0^1 t h \cdot \nabla^2 f(x+t s h) \cdot h ds dt = \int_0^1 \int_0^t  h \cdot \nabla^2 f(x+ w h) \cdot h d w dt =  \int_0^1 (1-w)  h \cdot \nabla^2 f(x+ w h) \cdot h d w 
\]
notice that
\[
\nabla V_{\lambda}(x) = x |x|^{\lambda-2}, \quad [\nabla^2 V_\lambda(x) ]_{ij}= |x|^{\lambda-2} \delta_{ij} + (\lambda-2) x_i x_j |x |^{\lambda-4},\quad i,j=1,...N
\]
it is easily seen that
\[
|\nabla^2 V_{\lambda}(x) |\le \kappa(N,\lambda) |x|^{\lambda-2} I_N
\]
for some constant $\kappa(N,\lambda)>0$, where $I_N$ is the $N \times N$ identity matrix. Set $y=-h$, we then compute
\begin{equation*}
\begin{aligned}
\int_{\R^N}\frac{1}{\lambda}|x-y|^{\lambda}\rho_{\infty}(y) dy =& \int_{\R^N}\frac{1}{\lambda}|x|^{\lambda}\rho_{\infty}(y) dy -\int_{\R^N} x \cdot y|x|^{\lambda-2} \rho_{\infty}(y) dy 
\\
&+ \int_{\R^N}\rho_{\infty}(y) \int_0^1 (1-w)  y \cdot \nabla^2 V_\lambda(x- w y) \cdot y d w dy
\end{aligned}
\end{equation*}
recall that
\[
\int_{\R^N} \rho_\infty(y) dy = 1, \quad \int_{\R^N} y_i\rho_\infty(y) dy = 0, \quad i=1,2,...,N
\]
so we deduce
\begin{equation}\label{est1}
0 \le \frac{1}{\lambda}|x|^{\lambda}*\rho_{\infty} - \frac 1 \lambda |x|^\lambda \le \kappa\int_{\R^N} \rho_\infty(y)|y|^2 (|x|^{\lambda-2} + |y|^{\lambda-2}) dy\le C_0(N,\lambda) |x|^{\lambda-2} +C_1(N,\lambda)
\end{equation}
with 
\[
C_0 \le \kappa\int_{\R^N} \rho_\infty(y) |y|^2 dy, \quad C_1\le \kappa\int_{\R^N} \rho_\infty(y) |y|^\lambda dy.
\]
in particular,
\begin{equation}
\label{est1}
\frac 1 \lambda |x|^\lambda +C \le  \frac{q}{1-q}\rho_{\infty}^{q-1}=\frac{1}{\lambda}|x|^{\lambda}*\rho_{\infty}+C \le\frac 1 \lambda |x|^\lambda +C +C_0|x|^{\lambda-2}+C_1
\end{equation}
which implies
\begin{equation}
\label{imp1}
\rho_{\infty}<\left(\frac{1-q}{q}\right)^\frac{1}{q-1}\left(\frac{|x|^\lambda}{\lambda}+C\right)^\frac{1}{q-1}
\end{equation}
from $\ird{\rho_{\infty}}=1$, we have 
\[
\left(\frac{1-q}{q}\right)^\frac{1}{q-1}(\lambda C)^\frac{N}{\lambda}C^\frac{1}{q-1}\ird{(1+|x|^\lambda)^\frac{1}{q-1}}>1
\]
by \eqref{betaf} we have
\[
\ird{(1+|x|^\lambda)^\frac{1}{q-1}}
=\frac{S_N}{\lambda}B\left(\frac{N}{\lambda},\frac{1}{1-q}-\frac{N}{\lambda}\right)\]
so
\[
C^{\frac{1}{1-q}-\frac{N}{\lambda}}<S_N\lambda^{\frac{N}{\lambda}-1}\Gamma\left(\frac{N}{\lambda}\right)\left(\frac{1-q}{q}\right)^\frac{1}{q-1}\frac{\Gamma\left(\frac{1}{1-q}-\frac{N}{\lambda}\right)}{\Gamma\left(\frac{1}{1-q}\right)}\sim \frac{S_N}{e}\lambda^{\frac{N}{\lambda}-1}\Gamma\left(\frac{N}{\lambda}\right)\left(\frac{1}{1-q}\right)^{\frac{1}{1-q}-\frac{N}{\lambda}}
\]
here we use the facts that as $q\to 1_{-}$,
\[
\lim_{q\to1_{-}}q^{\frac{1}{1-q}}=\frac{1}{e},\quad \frac{\Gamma\left(\frac{1}{1-q}-\frac{N}{\lambda}\right)}{\Gamma\left(\frac{1}{1-q}\right)} \sim (1-q)^\frac{N}{\lambda}
\]
so we conclude
\begin{equation}
\label{estimp1}
\lim_{q\to 1_{-}}C(1-q)\le 1.
\end{equation}
On the other hand, remind that $\rho_{\infty}$ satisfies $\Delta(\rho_{\infty}^q)+\nabla(\rho_{\infty}\nabla V_{\lambda}*\rho_{\infty})=0$, so
\[
\ird{|x|^2\Delta(\rho_{\infty}^q)}+\ird{|x|^2\nabla(\rho_{\infty}\nabla V_{\lambda}*\rho_{\infty})}=0
\]
we compute
\begin{equation*}
\begin{aligned}
\ird{|x|^2\nabla(\rho_{\infty}\nabla V_{\lambda}*\rho_{\infty})} =& -\ird{ \ire{ 2x \cdot (x-y) |x-y|^{\lambda-2} \rho_{\infty}(x)\rho_{\infty}(y) }} \\
=& -\ird{ \ire{ 2y \cdot (y-x) |x-y|^{\lambda-2}\rho_{\infty}(x)\rho_{\infty}(y) }} \\
=& -\ird{ \ire{  |x-y|^{\lambda}\rho_{\infty}(x)\rho_{\infty}(y) }} \\
\end{aligned}
\end{equation*}
where we change $x$ and $y$ in the second equality. So after integrating by parts, 
\begin{equation}
\label{eqd1}
2N\ird{\rho_{\infty}^q}=\ire{\ird{|x-y|^{\lambda}\rho_{\infty}(x)\rho_{\infty}(y)}}
\end{equation}
multiply \eqref{remindsta} by $\rho_{\infty}$ and integrate, we have
\begin{equation}
\label{eqd2}
\frac{q}{1-q}\ird{\rho_{\infty}^q}=\frac{1}{\lambda}\ire{\ird{|x-y|^{\lambda}\rho_{\infty}(x)\rho_{\infty}(y)}}+C
\end{equation}
from \eqref{eqd1}.\eqref{eqd2}, 
\[
\ire{\ird{|x-y|^{\lambda}\rho_{\infty}(x)\rho_{\infty}(y)}}=\frac{C(1-q)2N\lambda}{q\lambda-2N(1-q)}\lesssim 2N
\]
recall $\rho_\infty$ is  radially symmetric, non-increasing, by rearrangement inequality
\[
\ire{\ird{|x-y|^{\lambda}\rho_{\infty}(x)\rho_{\infty}(y)}}\ge \ird{|x|^{\lambda}\rho_{\infty}}
\]
so $\ird{|x|^{\lambda} \rho_{\infty} }$ is uniformly bounded, and the part (i) of the lemma is proved. By H\"older inequality we deduce that
\[
C_1,C_0\le \zeta(N,\lambda)
\]
that does not depend on $q$, so by \eqref{est1}
\[
\rho_{\infty}>\left(\frac{1-q}{q}\right)^\frac{1}{q-1}\left(\frac{|x|^\lambda}{\lambda}+C_0|x|^{\lambda-2}+C_1+C\right)^\frac{1}{q-1}
\]
notice that there exists $C_2(N,\lambda)>0$ that does not depend on $q$, such that for any $x\in\mathbb R^N$. \[
C_0|x|^{\lambda-2}\le \frac{|x|^\lambda}{\lambda}+C_2
\]
define $C_3(N,\lambda):=C+C_1(N,\lambda)+C_2(N,\lambda)$. Then
\[
\rho_{\infty}>\left(\frac{1-q}{q}\right)^\frac{1}{q-1}\left(\frac{2|x|^\lambda}{\lambda}+C_3\right)^\frac{1}{q-1}
\]
set $x=\left(\frac{C_3\lambda}{2}\right)^\frac{1}{\lambda}y$, by a similar calculation and \eqref{betaf}
\[
\left(\frac{1-q}{q}\right)^\frac{1}{q-1}\left(\frac{C_3\lambda}{2}\right)^\frac{N}{\lambda}C_3^\frac{1}{q-1}\frac{S_N}{\lambda}B\left(\frac{N}{\lambda},\frac{1}{1-q}-\frac{N}{\lambda}\right)<1
\]
which implies
\[
C_3^{\frac{1}{1-q}-\frac{N}{\lambda}}>\left(\frac{1-q}{q}\right)^\frac{1}{q-1}\left(\frac{\lambda}{2}\right)^\frac{N}{\lambda}\frac{S_N}{\lambda}\frac{\Gamma\left(\frac{N}{\lambda}\right)\Gamma\left(\frac{1}{1-q}-\frac{N}{\lambda}\right)}{\Gamma\left(\frac{1}{1-q}\right)}\sim (1-q)^{\frac{1}{q-1}+\frac{N}{\lambda}}\left(\frac{\lambda}{2}\right)^\frac{N}{\lambda}\frac{S_N}{\lambda e}\Gamma\left(\frac{N}{\lambda}\right)
\]
as $q\to 1_{-}$. So we have
\[
\lim_{q\to 1_{-}}C_3(1-q)\ge 1
\]
since $C_0,C_1,C_2$ do not depend on $q$, 
\begin{equation}
\label{estimp2}
\lim_{q\to 1_{-}} C(1-q)\ge 1.
\end{equation}
and the part (ii) is directly deduced from \eqref{estimp1},\eqref{estimp2}.
\end{proof}

\begin{lem}\label{lem7.2}
There exists a constant $\Lambda(N,\lambda)>0$, such that for all $q \in (\frac{2N}{2N+\lambda}, 1)$
\begin{equation}
\label{keyes1}
\ire{\ird{|x-y|^{2\lambda}  \rho_{\infty}^{2-q} (x)   \rho_{\infty}^{2-q} (y) }}  \le \Lambda,\quad\ire{\ird{|x-y|^{2\lambda-2}  \rho_{\infty} (x)   \rho_{\infty}^{2-q} (y) }}  \le \Lambda.
\end{equation}
\end{lem}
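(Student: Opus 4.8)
The plan is to separate the two difference kernels into one-point weights, thereby reducing \eqref{keyes1} to the uniform finiteness of a short list of weighted moments of $\rho_\infty$, to bound each such moment by the explicit Barenblatt-type profiles already produced in the proof of Lemma~\ref{L71}, and to evaluate the resulting radial integrals through the Beta function \eqref{betaf}. The only genuinely delicate issue will be making the bounds \emph{uniform} in $q$ on $\left(\tfrac{2N}{2N+\lambda},1\right)$.

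First I would apply the elementary convexity inequality $|x-y|^{s}\le 2^{s-1}(|x|^{s}+|y|^{s})$, valid for $s\ge 1$, with $s=2\lambda$ and $s=2\lambda-2$ (both exceed $1$ since $\lambda>2$). After this splitting the kernels factorize, so the two quantities in \eqref{keyes1} are controlled by finite sums of products of moments $M_{a,b}:=\ird{|x|^{a}\,\rho_\infty^{b}}$ with the pairs $(a,b)\in\{(2\lambda,2-q),\,(0,2-q),\,(2\lambda-2,1),\,(2\lambda-2,2-q)\}$. Since $\ird{\rho_\infty}=1$ and the uniform bound on $\ird{|x|^{\lambda}\rho_\infty}$ are already available from Lemma~\ref{L71}, the whole statement reduces to a uniform bound for these four moments.

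Next, recall from the proof of Lemma~\ref{L71} the two-sided estimate
\[
\left(\tfrac{1-q}{q}\right)^{\frac{1}{q-1}}\left(\tfrac{2}{\lambda}|x|^{\lambda}+C_3\right)^{\frac{1}{q-1}}\le \rho_\infty(x)\le \left(\tfrac{1-q}{q}\right)^{\frac{1}{q-1}}\left(\tfrac{1}{\lambda}|x|^{\lambda}+C\right)^{\frac{1}{q-1}},
\]
together with $C(q)(1-q)\to 1$. Inserting the upper bound into $M_{a,b}$ and substituting $|x|^{\lambda}=\lambda C\,s$ turns each moment into
\[
M_{a,b}\le \left(\tfrac{1-q}{q}\right)^{\frac{b}{q-1}}\frac{S_N}{\lambda}(\lambda C)^{\frac{a+N}{\lambda}}\,C^{\frac{b}{q-1}}\,B\!\left(\tfrac{a+N}{\lambda},\,-\tfrac{b}{q-1}-\tfrac{a+N}{\lambda}\right),
\]
which is finite exactly when $\tfrac{\lambda b}{1-q}>a+N$. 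One checks this integrability threshold against each of the four pairs; the binding constraint comes from the heaviest moment $(2\lambda-2,1)$, for which finiteness requires $q>\tfrac{N+\lambda-2}{N+2\lambda-2}$, a value that sits below $\tfrac{2N}{2N+\lambda}$ throughout the regime relevant to Theorem~\ref{thm1.4}.

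Finally, to extract a constant independent of $q$ I would track the $q$-dependence of the prefactor above. Using $C(q)(1-q)\to 1$, the limit $q^{1/(1-q)}\to 1/e$, and Stirling's formula for the Gamma ratio hidden inside $B$, the three factors $\left(\tfrac{1-q}{q}\right)^{b/(q-1)}$, $C^{(a+N)/\lambda+b/(q-1)}$ and the Beta value — each of which individually blows up or vanishes as $q\to 1^{-}$ — combine to a finite limit, namely the corresponding moment of the limiting profile proportional to $e^{-|x|^{\lambda}/\lambda}$. On any compact subinterval of $\left(\tfrac{2N}{2N+\lambda},1\right)$ the four moments depend continuously on $q$ and remain finite, so taking the supremum over $q$ yields the desired constant $\Lambda(N,\lambda)$. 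The main obstacle is precisely this uniform control as $q\to 1^{-}$: the stationary profile degenerates and the individual normalizing constants diverge, and it is only the cancellation guaranteed by the precise asymptotics $C(q)(1-q)\to 1$ from Lemma~\ref{L71}(ii) that keeps their product bounded.
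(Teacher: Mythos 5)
Your skeleton — split the kernels via $|x-y|^{s}\le 2^{s-1}(|x|^{s}+|y|^{s})$, reduce to one-point moments of $\rho_\infty$, bound these by the Barenblatt-type profiles from Lemma~\ref{L71}, and evaluate via the Beta function — is exactly the paper's, but two of your steps do not hold up. First, the numerical claim that the binding constraint $q>\tfrac{N+\lambda-2}{N+2\lambda-2}$ (from the moment $\ird{|x|^{2\lambda-2}\rho_\infty}$) ``sits below $\tfrac{2N}{2N+\lambda}$'' is false in general: cross-multiplying gives $(N+\lambda-2)(2N+\lambda)-2N(N+2\lambda-2)=\lambda(\lambda-2-N)$, so $\tfrac{N+\lambda-2}{N+2\lambda-2}<\tfrac{2N}{2N+\lambda}$ if and only if $\lambda<N+2$. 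Theorem~\ref{thm1.4} allows any $\lambda>2$ and any $N$; already for $N=1$, $\lambda=4$ one has $\tfrac{3}{7}>\tfrac13$. Worse, since $\rho_\infty$ obeys a matching \emph{lower} Barenblatt bound, for $\lambda>N+2$ and $q$ near $\tfrac{2N}{2N+\lambda}$ the moment $\ird{|x|^{2\lambda-2}\rho_\infty}$ is genuinely infinite, so no refinement of your estimate can cover the whole interval. The paper sidesteps this by declaring at the outset that only the regime $q\to 1^-$ needs to be treated (which is all that Lemma~\ref{lem8.1} uses); it also simplifies the moment list by noting $\rho_\infty^{1-q}\le q/(C(1-q))$ is uniformly bounded, so every exponent $2-q$ can be replaced by $1$ and, after interpolation, only $\ird{|x|^{2\lambda}\rho_\infty}$ remains to be estimated.

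Second, your uniformity argument as $q\to 1^-$ is incomplete: the ingredients you invoke ($C(1-q)\to 1$, $q^{1/(1-q)}\to 1/e$, Stirling) do not control the prefactor that actually appears. After inserting the upper profile and substituting, your bound for $M_{a,b}$ carries the factor $\bigl(C(1-q)/q\bigr)^{b/(q-1)}$, i.e.\ you must bound $\bigl(C(1-q)\bigr)^{-1/(1-q)}$ from above — a $1^\infty$ indeterminate which the bare limit $C(1-q)\to 1$ of Lemma~\ref{L71}(ii) cannot resolve (if, say, $C(1-q)=1-\sqrt{1-q}$, this factor diverges like $e^{1/\sqrt{1-q}}$). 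The paper never confronts this quantity: it multiplies and divides by the integral of the \emph{lower} profile, using mass normalization to get $\bigl(\tfrac{1-q}{q}\bigr)^{1/(q-1)}\ird{\bigl(\tfrac{2|x|^\lambda}{\lambda}+C_3\bigr)^{1/(q-1)}}<1$, and then shows the ratio of the upper-profile integral to the lower-profile integral converges, being $2^{N/\lambda}\bigl(1+\tfrac{C_1+C_2}{C}\bigr)^{\frac{1}{1-q}-\frac{N}{\lambda}}\to 2^{N/\lambda}e^{C_1+C_2}$, which requires only $C\sim(1-q)^{-1}$ and the $q$-independence of $C_1,C_2$. To close your argument you would need to import this normalization-ratio device, or equivalently prove a two-sided bound on $\bigl(C(1-q)\bigr)^{1/(1-q)}$, which is strictly finer information than the statement of Lemma~\ref{L71}(ii) you cite.
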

\begin{proof}
We only need to prove the case that $q\to 1_{-}$. Notice that
\[
|x-y|^p\le 2^{p-1}(|x|^p+|y|^p)\quad\mbox{for}\quad p\ge 1
\]
so we need to compute 
\[
\ird{|x|^{2\lambda}\rho^{2-q}_{\infty}}\ird{\rho^{2-q}_{\infty}},\quad \ird{|x|^{2\lambda-2}\rho_{\infty}}\ird{\rho^{2-q}_{\infty}}+\ird{|x|^{2\lambda-2}\rho^{2-q}_{\infty}}
\]
after interpolation, we only need to estimate the integrals
\[
\ird{|x|^{2\lambda-2}\rho_{\infty}},\quad\ird{\rho^{2-q}_{\infty}},\quad\ird{|x|^{2\lambda}\rho^{2-q}_{\infty}},
\]
first, from \eqref{ss4}, for any $x\in \mathbb R^N$,
\[
\frac{q}{1-q}\rho_{\infty}^{q-1}\ge C
\]
so as $q\to 1_{-}$,
\[
\rho_{\infty}^{1-q}\le \frac{q}{C(1-q)}\to 1
\]
which is
\[
\ird{\rho_{\infty}^{2-q}}\lesssim \ird{\rho_{\infty}}=1,\quad \ird{|x|^{2\lambda}\rho_{\infty}^{2-q}}\lesssim \ird{|x|^{2\lambda}\rho_{\infty}}
\]
so after interpolation, we only need to estimate $ \ird{|x|^{2\lambda}\rho_{\infty}}$. Remind that

\[
\left(\frac{1-q}{q}\right)^\frac{1}{q-1}\left(\frac{2|x|^\lambda}{\lambda}+C_3\right)^\frac{1}{q-1}<\rho_{\infty}<\left(\frac{1-q}{q}\right)^\frac{1}{q-1}\left(\frac{|x|^\lambda}{\lambda}+C\right)^\frac{1}{q-1}
\]
similarly as the computation above, for $q$ close enough to 1, by using $C\sim\frac{1}{1-q}$ and $C_1 , C_2 \le \zeta(N,\lambda)$, we have
\[
\frac{\ird{\left(\frac{|x|^\lambda}{\lambda}+C\right)^\frac{1}{q-1}}}{\ird{\left(\frac{2|x|^\lambda}{\lambda}+C_3\right)^\frac{1}{q-1}}}
=2^\frac{N}{\lambda}\cdot\frac{\ird{\left(\frac{|x|^\lambda}{\lambda}+C\right)^\frac{1}{q-1}}}{\ird{\left(\frac{|x|^\lambda}{\lambda}+C_3\right)^\frac{1}{q-1}}}
=2^\frac{N}{\lambda}\cdot\left(\frac{C+C_1+C_2}{C}\right)^{\frac{1}{1-q}-\frac{N}{\lambda}}\sim 2^\frac{N}{\lambda}e^{C_1+C_2}
\]
recall that $\ird{\rho_{\infty}}=1$, we have
\[
\left(\frac{1-q}{q}\right)^\frac{1}{q-1}\ird{\left(\frac{|x|^\lambda}{\lambda}+C\right)^\frac{1}{q-1}}\lesssim 2^\frac{N}{\lambda}e^{C_1+C_2}
\]
so from \eqref{imp1}, we obtain that as $q\to 1$,
\begin{equation*}
\begin{aligned}
\ird{|x|^{2\lambda}\rho_{\infty}}&<\left(\frac{1-q}{q}\right)^\frac{1}{q-1}\ird{|x|^{2\lambda}\left(\frac{|x|^\lambda}{\lambda}+C\right)^\frac{1}{q-1}}\\
&=(C\lambda)^2\cdot\frac{\Gamma(2+\frac{N}{\lambda})\Gamma(\frac{1}{1-q}-2-\frac{N}{\lambda})}{\Gamma(\frac{N}{\lambda})\Gamma(\frac{1}{1-q}-\frac{N}{\lambda})}\cdot\left(\frac{1-q}{q}\right)^\frac{1}{q-1}\ird{\left(\frac{|x|^\lambda}{\lambda}+C\right)^\frac{1}{q-1}}\\
&\sim (C(1-q))^2\frac{\lambda^2\Gamma(2+\frac{N}{\lambda})}{\Gamma(\frac{N}{\lambda})}\cdot\left(\frac{1-q}{q}\right)^\frac{1}{q-1}\ird{\left(\frac{|x|^\lambda}{\lambda}+C\right)^\frac{1}{q-1}}\\
&\lesssim \frac{\lambda^2\Gamma(2+\frac{N}{\lambda})}{\Gamma(\frac{N}{\lambda})}2^\frac{N}{\lambda}e^{C_1+C_2}.
\end{aligned}
\end{equation*}
so the proof is thus finished.
\end{proof}

\begin{rmk}
For the special case $\lambda =4$, we have that after translation,
\begin{equation}
\label{ss4}
\frac{q}{1-q}\rho_{\infty}^{q-1}=\frac{1}{4}|x|^4+\frac{3a}{2}|x|^2+C
\end{equation}
here $a$ satisfies $\ird{(|x|^2-a)\rho_\infty}=0$, which is
\[
\int_{0}^{\infty}{(x^{N+1}-ax^{N-1})\left(\frac{1}{4}x^4+\frac{3a}{2}x^2+C\right)^\frac{1}{q-1}dx}=0
\]
so we can prove the case $\lambda=4$ directly.
\end{rmk}

\subsection{General Hardy-Poincar\'e inequality}
The main goal of this subsection is to show that
\begin{lem}\label{lem7.4}Recall that for $\lambda>2$
\[
\mathcal M(x) =\frac 1 {1+|x|^{\lambda-2}}
\]
for any $q\in(\frac{2N}{2N+\lambda},1)$, there exists a constant $\mathcal C_{q, N,\lambda}>0$, such that for all $h$ that satisfies
\[
\ird{\rho_\infty^{2-q} h} =0,
\]
we have
\[
\ird{\rho_\infty|\nabla h|^2 \mathcal{M} } \ge \mathcal C_{q, N,\lambda} \ird{\rho_\infty^{2-q}h^2}.
\]
Moreover, there exists a constant $\Omega(N,\lambda)>0$, such that for any $q\in(\frac{2N}{2N+\lambda},1)$,
\begin{equation}
\label{keyes2}
\mathcal C_{q, N,\lambda}(1-q) \ge \Omega(N,\lambda).
\end{equation}
\end{lem}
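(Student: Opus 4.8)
The plan is to read the asserted inequality as a weighted Poincar\'e inequality for the probability measure $\rho_\infty^{2-q}\,dx$, whose gradient coefficient is $\rho_\infty^{q-1}\mathcal M$, and to reduce it to the classical Hardy--Poincar\'e inequality of Lemma~\ref{lemhp} by comparison with an explicit Barenblatt-type model. Set $\mathcal B:=\left(\tfrac{|x|^\lambda}{\lambda}+C\right)^{1/(q-1)}$, which has exactly the form of $N_{h_*}$ up to a multiplicative constant that cancels in any Rayleigh quotient. The two-sided pointwise bounds in \eqref{est1}, together with Lemma~\ref{L71}, furnish constants $0<c_-(q)\le c_+(q)$ with $c_-\mathcal B\le\rho_\infty\le c_+\mathcal B$ on all of $\R^N$. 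The mechanism I would exploit is the one already used just after Lemma~\ref{lemhp}: the elementary equivalence $1+|x|^\lambda\asymp(1+|x|^2)(1+|x|^{\lambda-2})$ shows that $\mathcal M=(1+|x|^{\lambda-2})^{-1}$ is precisely the weight converting $\rho_\infty\mathcal M$ and $\rho_\infty^{2-q}$ into the quadratic Barenblatt weights governed by Lemma~\ref{lemhp}.

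Concretely, I would first replace $\rho_\infty$ by $\mathcal B$ in both integrals, paying a multiplicative factor controlled by $c_+/c_-$, and then treat the constraint mismatch: the hypothesis gives $\ird{\rho_\infty^{2-q}h}=0$ rather than the centering with respect to $\mathcal B^{2-q}$ needed to apply Lemma~\ref{lemhp}. I would subtract the $\mathcal B^{2-q}$-mean of $h$ and bound the error through the difference $|\rho_\infty^{2-q}-c\,\mathcal B^{2-q}|$, which is integrable against $h^2$ thanks to the moment control of Lemma~\ref{lem7.2}. For the model weight the inequality is then literally Lemma~\ref{lemhp} with $N_{h_*}$ replaced by $\mathcal B$, and $q\in\left(\tfrac{2N}{2N+\lambda},1\right)$ keeps us away from the excluded exponent. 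This already produces a strictly positive $\mathcal C_{q,N,\lambda}$ for each fixed $q$.

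The main obstacle is the quantitative lower bound \eqref{keyes2}. Here I would rescale $x=(\lambda C)^{1/\lambda}y$ to normalize $\mathcal B$ to $(1+|y|^\lambda)^{1/(q-1)}$ and carry every Jacobian and normalization factor explicitly. The expected blow-up $\mathcal C_{q,N,\lambda}\sim(1-q)^{-1}$ is created by the explicit classical Hardy--Poincar\'e constant, exactly as in the model value $\mathcal C_{q,N,2}=\tfrac{(N-4-q(N-2))^2}{8q(1-q)}$ computed in Section~\ref{Sec6:nonlidiffu2}, while Lemma~\ref{L71}(ii), namely $\lim_{q\to1_-}C(1-q)=1$, pins the scaling factor. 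The genuinely delicate point is uniformity: one must show that $c_\pm(q)$ and the constraint-correction error stay bounded \emph{simultaneously} as $q\to1_-$ (where $\rho_\infty$ degenerates to the generalized Gaussian $\propto e^{-|x|^\lambda/\lambda}$) and as $q\to\left(\tfrac{2N}{2N+\lambda}\right)^+$ (where the tails of $\rho_\infty$ fatten). This is precisely where the $q$-uniform moment estimates of Lemma~\ref{L71}(i) and Lemma~\ref{lem7.2} are indispensable: they force $c_+/c_-$ and the error to be dominated by constants depending only on $N$ and $\lambda$, so that after multiplying by $(1-q)$ the explicit $(1-q)^{-1}$ survives as a strictly positive $\Omega(N,\lambda)$.

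As a robustness check I would keep in reserve a direct spectral argument: since $\rho_\infty$ is radial, decompose $h$ into spherical harmonics, reduce to a one-parameter family of one-dimensional weighted Sturm--Liouville problems, and estimate the lowest nonzero eigenvalue, taking care that the $\ell=1$ sector (tied to the translations removed by the centering $\ird{x_i\rho_\infty}=0$) does not depress the gap. I expect the comparison route reusing Lemma~\ref{lemhp} to be the shorter one, with essentially all the real work concentrated in the uniform-in-$q$ tracking of constants described above.
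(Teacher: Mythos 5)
Your first steps coincide with the paper's: its proof of Lemma~\ref{lem7.4} also replaces $\rho_\infty$ by the explicit profile $\left(\frac{|x|^\lambda}{\lambda}+C\right)^{1/(q-1)}$, using Lemma~\ref{L71} to control the error (written there as $\mathcal R(x):=\frac{q}{1-q}\rho_\infty^{q-1}-\frac1\lambda|x|^\lambda-C$, with $|\mathcal R|\lesssim 1+|x|^{\lambda-2}$ uniformly in $q$), rescales by $C^{1/\lambda}$, and invokes $1+w^\lambda\asymp(1+w^2)(1+w^{\lambda-2})$; for fixed $q$ this, together with Lemma~\ref{lemhp}, gives the existence of a positive $\mathcal C_{q,N,\lambda}$, exactly as you propose. (A minor remark: your handling of the centering mismatch is overcomplicated and not clearly correct as stated. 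Since $\int|h-c|^2\,d\mu$ is minimized at the $\mu$-mean, the hypothesis $\ird{\rho_\infty^{2-q}h}=0$ lets you insert any other centering for free; no moment bounds from Lemma~\ref{lem7.2} are needed, and bounding $\int|\rho_\infty^{2-q}-c\,\mathcal B^{2-q}|h^2\,dx$ directly is not obviously possible from the Poincar\'e quantities at hand.)

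The genuine gap is in your argument for \eqref{keyes2}. You propose to extract the $(1-q)^{-1}$ blow-up from ``the explicit classical Hardy--Poincar\'e constant, exactly as in the model value $\mathcal C_{q,N,2}$.'' But that explicit constant, $\mathcal A_{q,N}=\frac{(Nq-2q-N+4)^2}{8q(1-q)}$ from \cite{blanchet2009asymptotics} Appendix A, is proved only for the quadratic weight $V_D=\left(D+\frac{1-q}{2q}|x|^2\right)^{-1/(1-q)}$, i.e.\ for $\lambda=2$ where $\mathcal M\equiv1$. For $\lambda>2$ the relevant measure $\rho_\infty^{2-q}\asymp(1+|x|^\lambda)^{-(2-q)/(1-q)}$ and gradient weight $\rho_\infty\mathcal M\asymp(1+|x|^\lambda)^{-1/(1-q)}(1+|x|^{\lambda-2})^{-1}$ are not powers of $1+|x|^2$, so that theorem does not apply, and Lemma~\ref{lemhp}, which does cover general $\lambda$, asserts only ``some constant $C_{q,N,\lambda}$'' with no information on its $q$-dependence. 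Your main route therefore assumes precisely the quantitative ingredient that must be proved. This is where the paper introduces the tool missing from your proposal: after reducing to $N=1$ (radial part plus the Poincar\'e inequality on $\mathbb S^{N-1}$), it applies the Muckenhoupt-type two-sided criterion of Barthe--Roberto (Lemma~\ref{lem7.7}), computes the Muckenhoupt quantity $K\sim\max_{r>0}\int_0^r w^{\frac{\lambda(2-q)}{1-q}-2}\,dw\int_r^\infty w^{-\frac{\lambda(2-q)}{1-q}}\,dw=O((1-q)^2)$, and concludes $\mathcal C_{q,1,\lambda}\frac{q}{1-q}\ge\frac{1}{4a(\lambda)K}$, hence $\lim_{q\to1_-}\mathcal C_{q,1,\lambda}(1-q)\ge\frac{\lambda^2}{4a(\lambda)}>0$. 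Your fallback idea (spherical harmonics plus one-dimensional Sturm--Liouville estimates) is in fact the route the paper takes, but it is exactly the part you left unexecuted; and the $q$-uniform moment bounds of Lemmas~\ref{L71} and~\ref{lem7.2}, which you lean on for uniformity, control tails of $\rho_\infty$ but cannot by themselves produce the spectral-gap lower bound.
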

This lemma is inspired from \cite{blanchet2009asymptotics} Appendix A, where the authors proved the similar result for the case $\lambda=2$.
\begin{thm}(\cite{blanchet2009asymptotics} Appendix A, Hardy-Poincar\'e inequality) Let
\[
V_D(x) = \left(D+\frac {1-q} {2q} |x|^2\right)^{-\frac 1 {1-q} } ,\quad  d\mu = V_D^{2-q}dx, \quad d \nu = V_D dx
\]
Let $N \ge 1$ and $D>0$. If $q \in (\max\left\{ \frac {N-4} {N-2} , 0\right\},1)$, then there exists a constant $\mathcal A_{q, N }$ which does not depend on $D$ such that for all smooth function $g$ we have
\[
\mathcal A_{q, N}\int_{\R^N} |g-\bar {g} |^2 d \mu \le \int_{\R^N} |\nabla g |^2 d \nu, \quad \bar{g} = \int_{\R^N} g d \mu
\]
where 
\[
\mathcal A_{q, N } = \frac {(Nq-2q -N+4 )^2} {8q(1-q)}
\]
is optimal.
\end{thm}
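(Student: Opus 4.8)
The plan is to transport the whole inequality into the variables in which $\rho_\infty$ looks like an ordinary $\lambda=2$ Barenblatt profile, and then to invoke the cited Hardy--Poincaré theorem in an effective dimension. First I would use the two-sided bounds on $\rho_\infty$ proved in Lemmas~\ref{L71} and~\ref{lem7.2} — namely that, up to the prefactor $\left(\frac{1-q}{q}\right)^{\frac{1}{q-1}}$, one has $\left(\frac{2|x|^\lambda}{\lambda}+C_3\right)^{\frac{1}{q-1}}<\rho_\infty<\left(\frac{|x|^\lambda}{\lambda}+C\right)^{\frac{1}{q-1}}$ — to replace $\rho_\infty$ by the explicit model profile $P(x):=\left(\frac{|x|^\lambda}{\lambda}+C\right)^{\frac{1}{q-1}}$ in both the Dirichlet form and the $L^2$ term. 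For $q$ fixed this costs only a finite multiplicative constant, which suffices to produce a positive gap $\mathcal C_{q,N,\lambda}$ for each individual $q$.

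The key step is the radial substitution $s=|x|^{\lambda/2}$, which turns $|x|^\lambda$ into $s^2$ and hence $P$ into a genuine $\lambda=2$ Barenblatt profile $\left(\frac{s^2}{\lambda}+C\right)^{\frac{1}{q-1}}$ in the effective dimension $N'=\frac{2N}{\lambda}$ (the radial Jacobian $r^{N-1}\,dr$ becomes, up to a constant, $s^{N'-1}\,ds$). Under this map $(\partial_r h)^2=\frac{\lambda^2}{4}|x|^{\lambda-2}(\partial_s h)^2$, so the mobility factor $\mathcal M=\frac{1}{1+|x|^{\lambda-2}}$ combines with the Jacobian weight $|x|^{\lambda-2}$ into $\frac{|x|^{\lambda-2}}{1+|x|^{\lambda-2}}$, which tends to $1$ at infinity. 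This is exactly the purpose of $\mathcal M$: it cancels the growth generated by the change of variables so that, in the tails, the transported Dirichlet form matches the Barenblatt gradient weight $V_D\,s^{N'-1}\,ds$. One then checks that the running hypothesis $q>\frac{2N}{2N+\lambda}=\frac{N'}{N'+1}$ implies the condition $q>\max\{\frac{N'-4}{N'-2},0\}$ of the cited theorem, so the latter applies in dimension $N'$; decomposing $h$ into spherical harmonics, the radial mode is governed by this one-dimensional weighted inequality, while the non-radial modes carry a strictly larger gap coming from the angular Laplacian, so the radial mode is binding.

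To obtain the uniform lower bound $\mathcal C_{q,N,\lambda}(1-q)\ge\Omega$, I would feed the sharp asymptotics into the transported inequality rather than the crude sandwich. By Lemma~\ref{L71}(ii) one has $C(1-q)\to1$, so after normalization $P$ converges as $q\to1_-$ to the generalized Gaussian proportional to $e^{-|x|^\lambda/\lambda}$, and the effective-dimension constant $\mathcal A_{q,N'}=\frac{(N'q-2q-N'+4)^2}{8q(1-q)}$ satisfies $\mathcal A_{q,N'}(1-q)\to\frac12$; the moment bounds of Lemma~\ref{lem7.2} guarantee that the finitely many error terms produced by passing from $\rho_\infty$ to $P$ stay controlled uniformly in $q$. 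Collecting these estimates yields a constant $\Omega(N,\lambda)>0$ independent of $q$.

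The residual degeneracy of the transported gradient weight at $s=0$ (where $\frac{|x|^{\lambda-2}}{1+|x|^{\lambda-2}}\sim s^{2-4/\lambda}$ for $\lambda>2$) is only an artefact of the change of variables, which is singular at $x=0$: in the original coordinates the weight $\rho_\infty\mathcal M$ is smooth and strictly positive near the origin, so it can be removed by localizing — using a trivial Poincaré inequality on a fixed ball and the substitution only in the exterior region. The genuine obstacle is the uniform control as $q\to1_-$. The two model profiles are \emph{not} uniformly comparable in the tails — their ratio is of order $2^{1/(1-q)}$ — so the estimate $\mathcal C_{q,N,\lambda}(1-q)\ge\Omega$ cannot come from crude sandwiching and must instead be extracted from the precise asymptotics $C(1-q)\to1$ together with the uniform moment bounds of Lemma~\ref{lem7.2}, which pin down the limiting generalized-Gaussian problem and the correct $\frac{1}{1-q}$ scaling of the gap. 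This is the step I expect to require the most care.
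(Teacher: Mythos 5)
Your proposal does not prove the statement it was assigned to. The statement is the Hardy--Poincar\'e inequality of \cite{blanchet2009asymptotics} (Appendix A) for the \emph{harmonic} ($\lambda=2$) Barenblatt profile $V_D(x)=\left(D+\frac{1-q}{2q}|x|^2\right)^{-\frac{1}{1-q}}$, with the explicit constant $\mathcal A_{q,N}=\frac{(Nq-2q-N+4)^2}{8q(1-q)}$, independent of $D$ and \emph{optimal}. What you sketch instead is Lemma~\ref{lem7.4} of the paper --- the spectral-gap inequality for the $\lambda>2$ stationary state $\rho_\infty$ with mobility $\mathcal M=(1+|x|^{\lambda-2})^{-1}$ and the uniform bound $\mathcal C_{q,N,\lambda}(1-q)\ge\Omega$ --- and your very first step is ``to invoke the cited Hardy--Poincar\'e theorem in an effective dimension.'' As a proof of the statement in question this is circular: the theorem you invoke \emph{is} the statement. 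Moreover, nothing in your argument could produce the three things that make the statement nontrivial: the explicit value of $\mathcal A_{q,N}$, its independence of $D$ (a scaling argument), and its optimality (which requires identifying the spectral gap of the linearized operator, or an extremal sequence); none of this is sketched. For the record, the paper itself does not prove this theorem either: it imports it verbatim from \cite{blanchet2009asymptotics} and only uses the consequence $(1-q)\mathcal A_{q,N}\to\frac12$ as $q\to1_-$.

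Even regraded as a proof of Lemma~\ref{lem7.4}, your route is not the paper's and has a concrete gap. The substitution $s=|x|^{\lambda/2}$ produces the radial weight $s^{N'-1}\,ds$ with $N'=2N/\lambda$, which is in general not an integer (and for $\lambda\ge N$ gives $N'\le 2$, where the stated admissibility condition $q>\max\{\frac{N'-4}{N'-2},0\}$ is vacuous as written), so the cited theorem, stated for integer $N\ge1$, cannot simply be ``applied in dimension $N'$.'' What is needed is a one-dimensional weighted inequality valid for arbitrary real power weights, and that is exactly how the paper proceeds: after reducing to radial functions plus the Poincar\'e inequality on $\mathbb{S}^{N-1}$, it applies the Barthe--Roberto/Muckenhoupt criterion (Lemma~\ref{lem7.7}), bounding the best constant via $B=\sup_{r>0}\mu([r,\infty))\int_0^r n(s)^{-1}ds$ computed for the profile $\left(\frac{1}{\lambda}w^\lambda+\frac{\mathcal R}{C}+1\right)^{\pm}$; since the same (unknown) profile appears in both measures, only the $q$-independent sandwich constant $a(\lambda)$ is lost, and the computation gives $K=O((1-q)^2)$, hence $\mathcal C_{q,1,\lambda}(1-q)\gtrsim\lambda^2/(4a(\lambda))$ uniformly in $q$. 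Your proposal correctly observes that crude sandwiching of $\rho_\infty$ between the two model profiles costs a factor of order $2^{1/(1-q)}$ in the tails, but then only asserts that the asymptotics $C(1-q)\to1$ and the moment bounds will rescue the uniform bound, with no mechanism for doing so; that missing mechanism is precisely the Muckenhoupt-type estimate, i.e.\ the actual content of the paper's proof.
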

\noindent Let $q$ tends to 1 we have
\[
\lim_{q\to 1_{-}}(1-q)\mathcal A_{q, N}=\frac 12.
\]
Before proving the lemma~\ref{lem7.4} we still need
\begin{lem}\label{lem7.7} (\cite{barthe2006modified} Theorem 1, 2) Let $\mu, \nu$ be two Borel measures on $\R^+$, then the best constant $A$ such that every smooth function $f$ verifies 
\[
\int_0^{+\infty} |f-f(0)|^2 d\mu \le A \int_0^{+\infty}|f'|^2 d\nu
\]
is finite if and only if
\[
B:=\sup_{x>0} \mu([x, +\infty ))\int_0^x \frac 1 {n(s)} ds
\]
is finite, where $n$ is the density of the absolute continuous part of $\nu$, moreover when it is finite we have 
\[
B \le A \le 4B 
\]
\end{lem}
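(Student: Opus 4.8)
The plan is to recognize this as the sharp weighted Hardy--Muckenhoupt inequality and to prove the two-sided bound $B\le A\le 4B$ directly, treating the singular part of $\nu$ differently in each direction; the equivalence $A<\infty\Leftrightarrow B<\infty$ is then immediate from the two-sided bound. Throughout I would normalize $f(0)=0$ (neither side changes under $f\mapsto f-f(0)$) and assume $f'\ge 0$, since replacing $f'$ by $|f'|$ only increases $|f(x)|^2=\big|\int_0^x f'\big|^2$. Write the Lebesgue decomposition $\nu=n\,dx+\nu_s$ with $\nu_s\perp dx$, and set $W(x):=\int_0^x n^{-1}$.

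\emph{Upper bound.} Since $\nu_s\ge 0$, one has $\int_0^\infty|f'|^2\,d\nu\ge\int_0^\infty|f'|^2 n\,dx$, so it suffices to prove the purely absolutely continuous estimate $\int_0^\infty f^2\,d\mu\le 4B\int_0^\infty (f')^2 n\,dx$. I would apply Cauchy--Schwarz with the splitting $f'=(f'\,n^{1/2}W^{1/4})(n^{-1/2}W^{-1/4})$ together with the identity $\int_0^x n^{-1}W^{-1/2}=2W(x)^{1/2}$ to obtain $f(x)^2\le 2W(x)^{1/2}\int_0^x (f')^2 n\,W^{1/2}$. Integrating against $d\mu$ and applying Tonelli reduces everything to the tail $T(t):=\int_t^\infty W^{1/2}\,d\mu$. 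Writing $M(x):=\mu([x,\infty))$ and integrating by parts gives $T(t)=W(t)^{1/2}M(t)+\tfrac12\int_t^\infty M\,W^{-1/2}n^{-1}\,dx$; bounding $M(x)\le B/W(x)$ (the definition of $B$) and using $\int_t^\infty W^{-3/2}n^{-1}=2W(t)^{-1/2}$ yields $T(t)\le 2B\,W(t)^{-1/2}$, the boundary terms vanishing because $B<\infty$ forces $W^{1/2}M\to 0$. Substituting this back produces exactly the constant $4B$.

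\emph{Lower bound.} For each $r>0$ I would build a near-extremal test function realizing the ratio $\mu([r,\infty))\int_0^r n^{-1}$. The crucial point is to steer the derivative away from $\nu_s$: by mutual singularity there is a Borel set $E_r\subset(0,r)$ with $|E_r|=0$ carrying all of $\nu_s$ on $(0,r)$, so that $\nu=n\,dx$ on $(0,r)\setminus E_r$. Taking $f_r(x):=\int_0^{\min(x,r)} n^{-1}\mathbf 1_{(0,r)\setminus E_r}\,ds$ (approximated by smooth functions) gives $f_r(0)=0$ and $f_r\equiv\int_0^r n^{-1}$ on $[r,\infty)$, whence the numerator is at least $\big(\int_0^r n^{-1}\big)^2\mu([r,\infty))$, while the denominator is $\int_{(0,r)\setminus E_r} n^{-2}\,d\nu=\int_0^r n^{-1}$ because $f_r'$ vanishes on $E_r$ and $\nu_s((0,r)\setminus E_r)=0$. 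The ratio equals $\mu([r,\infty))\int_0^r n^{-1}$, and taking the supremum over $r$ gives $A\ge B$ (a truncation $n^{-1}\wedge k$ handles the divergent case and shows $A=\infty$ when $B=\infty$).

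The main obstacle is the bookkeeping of the singular part $\nu_s$: it must be dropped ``for free'' in the upper bound, where it only enlarges the right-hand side, yet carefully avoided in the lower bound, where the naive choice $f_r'=n^{-1}\mathbf 1_{(0,r)}$ would contaminate the denominator with an uncontrolled $\int_0^r n^{-2}\,d\nu_s$. The secondary technical points are the approximation of the non-smooth extremal profiles by admissible smooth $f$ (truncation plus mollification, with monotone/dominated convergence), and the justification that the boundary terms in the tail integration by parts vanish, which is precisely where the hypothesis $B<\infty$ is genuinely used.
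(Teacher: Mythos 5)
The paper itself offers no proof of this lemma --- it is quoted directly from Barthe--Roberto (who in turn attribute it to Muckenhoupt) --- so your proposal has to stand on its own. Your upper bound $A\le 4B$ does: the Cauchy--Schwarz splitting with the weight $W^{1/4}$, the Tonelli step, and the tail estimate $\int_t^\infty W^{1/2}\,d\mu\le 2B\,W(t)^{-1/2}$ are exactly the classical Muckenhoupt argument, and discarding $\nu_s$ on the right-hand side is indeed free since it only increases the denominator.

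The lower bound, however, has a genuine gap. Your test function $f_r(x)=\int_0^{\min(x,r)}n^{-1}\mathbf 1_{(0,r)\setminus E_r}\,ds$ is identical, \emph{as a function}, to $x\mapsto W(\min(x,r))$: deleting the Lebesgue-null set $E_r$ from the integrand is a no-op, so it cannot change $f_r$, let alone its derivative. Hence the key claim that ``$f_r'$ vanishes on $E_r$'' is false; at $\nu_s$-typical points the difference quotients of $f_r$ see the full density, and the derivative (where it exists) is $n^{-1}(x)$, not $0$. What you have actually done is choose a pointwise representative $\psi=n^{-1}\mathbf 1_{(0,r)\setminus E_r}$ of the a.e.-derivative and compute $\int\psi^2\,d\nu$. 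That is legitimate in Muckenhoupt's formulation, where the inequality quantifies over the integrand $\psi$ and sets $f(x)=\int_0^x\psi$, but the statement here quantifies over smooth $f$, for which $f'$ is the honest continuous derivative and no representative may be selected. Your proposed repair, ``truncation plus mollification,'' breaks at exactly this point: the mollified derivative $(n^{-1}\mathbf 1_{(0,r)})*\phi_\epsilon$ at a point of $E_r$ is a local average of $n^{-1}$ and is not small, so the contamination $\int|f'|^2\,d\nu_s$ that you yourself flag as the main obstacle reappears (take $\nu_s=\delta_1$ and $n\equiv 1$: every mollification has $f'(1)\approx 1$). The standard fix is different: by inner regularity choose a compact $K\subset E_r$ with $\nu_s(E_r\setminus K)<\delta$, excise an open neighborhood $U\supset K$ of small but \emph{positive} Lebesgue measure, and take a smooth $f'$ equal to a mollified $n^{-1}\wedge m$ outside $U$ and vanishing near $K$; then the singular contribution is at most $m^2\delta$, the numerator loses only about $m|U|$, and letting $|U|\to 0$, then $\delta\to 0$, then $m\to\infty$ recovers $A\ge B$ (and $A=\infty$ when $B=\infty$). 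With that replacement your outline becomes a complete proof.
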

\begin{proof}({\bf Proof of Lemma \ref{lem7.4}})
The existence of $\mathcal C(q,N,\lambda)$ can be deduced as Lemma~\ref{lemhp}, so we mainly focus on the behavior of $\mathcal C(q,N,\lambda)$ as $q\to 1_{-}$. The proof is the same as the Hardy-Poincar\'e inequality, we omit some details and focus on the asymptotic behavior. We only prove for the case $N=1$, and for the proof of general $N$, we should prove  the results in radical functions plus the Poincar\'e inequality on the unit sphere
\[
\int_{\mathbb{S}^{N-1}} \left|   u-\hat{u} \right|^2 d\theta \le \frac 1 {N-1}\int_{\mathbb{S}^{N-1}} \left|   \nabla_\theta u \right|^2 d\theta
\]
with 
\[
\hat{u} = \int_{\mathbb{S}^{N-1}}  u d\theta
\]
the full proof for general $N$ can also be found in \cite{blanchet2009asymptotics} Appendix A. Define
\[
\mathcal R(x):=\frac{q}{1-q}\rho_{\infty}^{q-1}-\frac{1}{\lambda}|x|^{\lambda}-C
\]
we only need to prove that
\[
\mathcal C_{q, 1,\lambda} \frac q{1-q} \int_0^\infty |h(r) - \bar{h}|^2 \left(\frac{1}{\lambda}r^{\lambda}+\mathcal R(r)+C\right)^{-\frac {2-q} {1-q}} dr\le \int_0^\infty |h'(r)|^2  \left(\frac{1}{\lambda}r^{\lambda}+\mathcal R(r)+C\right)^{-\frac {1} {1-q} } \frac 1 {1+r^{\lambda-2} }dr
\]
set $r^{\lambda}=Cw^{\lambda}$, the former inequality becomes
\begin{equation*}
\begin{aligned}
&\mathcal C_{q, 1,\lambda} \frac q{1-q} \int_0^\infty |h(w) - \bar{h}|^2\left(\frac{1}{\lambda}w^{\lambda}+\frac{\mathcal R(C^{\frac{1}{\lambda}}w)}{C}+1\right)^{-\frac {2-q} {1-q}}  dw
\\
&\le \int_0^\infty |h'(w)|^2 \left(\frac{1}{\lambda}w^{\lambda}+\frac{\mathcal R(C^{\frac{1}{\lambda}}w)}{C}+1\right)^{-\frac {1} {1-q} }  \left(\frac {1} {C^{\frac {2-\lambda} \lambda}  +w^{\lambda-2  }  }\right) d w
\end{aligned}
\end{equation*}
when $q$ is close enough to 1,  we have $C \ge 1$, and since $\lambda >2$
\[
\frac {1} {1 +w^{ \lambda-2  }}  \le \frac {1} {C^{\frac {2-\lambda} \lambda}  +w^{\lambda-2  } }
\]
so it is enough to prove that
\begin{equation*}
\begin{aligned}
&\mathcal C_{q, 1,\lambda} \frac q{1-q} \int_0^\infty |h(w) - \bar{h}|^2\left(\frac{1}{\lambda}w^{\lambda}+\frac{\mathcal R(C^{\frac{1}{\lambda}}w)}{C}+1\right)^{-\frac {2-q} {1-q}} d w
\\
&\le \int_0^\infty |h'(w)|^2 \left(\frac{1}{\lambda}w^{\lambda}+\frac{\mathcal R(C^{\frac{1}{\lambda}}w)}{C}+1\right)^{-\frac {1} {1-q} }\mathcal M(w)d w
\end{aligned}
\end{equation*}
remind
\[
|\mathcal{R}(w)| \le \max\{C_0(\lambda),C_1(\lambda)\}\cdot(w^{\lambda-2} +1)
\]
where $C_0,C_1$ do not depend on $q$, choose 
\[
a(\lambda):=\max\{2\lambda, 1+\max\{C_0(\lambda),C_1(\lambda)\}\}
\]
then for all $w\ge 0$,
\[
\frac 1 {a(\lambda) }(1+w^2) (1+w^{\lambda-2}) \le \frac{1}{\lambda}w^{\lambda}+\frac{\mathcal R(C^{\frac{1}{\lambda}}w)}{C}+1 \le a(\lambda)(1+w^2) (1+w^{\lambda-2})
\]
we only need to prove that
\begin{equation*}
\begin{aligned}
&\frac{a(\lambda)q\mathcal C_{q, 1,\lambda}}{1-q} \int_0^\infty |h(w) - \bar{h}|^2 \left(\frac{1}{\lambda}w^{\lambda}+\frac{\mathcal R(C^{\frac{1}{\lambda}}w)}{C}+1\right)^{-\frac {1} {1-q}} (1+w^2)^{-1} d w
\\
&\le \int_0^\infty |h'(w)|^2  \left(\frac{1}{\lambda}w^{\lambda}+\frac{\mathcal R(C^{\frac{1}{\lambda}}w)}{C}+ 1 \right)^{-\frac {2-q} {1-q} }\mathcal M(w)d w
\end{aligned}
\end{equation*}
by Lemma \ref{lem7.7}, we have that
\[
\mathcal C_{q, 1,\lambda} \frac q{1-q}  \ge \frac {1} {4a(\lambda)K}
\]
with 
\[
K =\max_{r>0} \int_0^r  \left(\frac{1}{\lambda}w^{\lambda}+\frac{\mathcal R(C^{\frac{1}{\lambda}}w)}{C} + 1\right)^{\frac {2-q} {1-q} } (1+w^2)^{-1}  d w \int_r^\infty \left(\frac{1}{\lambda}w^{\lambda}+\frac{\mathcal R(C^{\frac{1}{\lambda}}w)}{C} + 1\right)^{-\frac {2-q} {1-q} } d w
\]
it's easily seen that 
\[
K\sim\max_{r>0} \int_0^r w^{\frac {\lambda(2-q)} {1-q}-2} d w \int_r^\infty w^{-\frac {\lambda(2-q)} {1-q} }d w =O((1-q)^2)
\]
so
\[
\lim_{q\to 1_{-}}\mathcal C_{q,1,\lambda}(1-q)\ge \frac{\lambda^2}{4a(\lambda)}
\]
which finishes the proof of the lemma.
\end{proof}

\subsection{The main result (proof of  Theorem\ref{thm1.4})}
We first prove that
\begin{lem}\label{lem8.1}
For any $\lambda>2$, there exists $q_{N,\lambda}\in (\frac{2N}{2N+\lambda}, 1)$, such that for any $q\in (q_{N,\lambda}, 1)$ and $v(t)$ as the solution of \eqref{changev1}, there exists $\gamma>0$, such that
\[
\mathbb F[v] \le \gamma \mathbb  I[v].
\]
\end{lem}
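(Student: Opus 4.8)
The plan is to mimic the coercivity argument used for $\lambda=2$ in Section~\ref{Sec6:nonlidiffu2}, but since the explicit quadratic--form identities there relied on the harmonic shape of $\rho_\infty$, I would replace them by the uniform moment estimates of Lemmas~\ref{L71} and~\ref{lem7.2} together with the weighted Hardy--Poincar\'e inequality of Lemma~\ref{lem7.4}. Writing $g=v-1$, recall from \eqref{changev1}--\eqref{fisher2} that
\[
\mathbb I[v]=\ird{\rho_\infty v\,|A-B|^2},\quad A:=\tfrac q{1-q}\nabla\!\left(\rho_\infty^{q-1}(v^{q-1}-1)\right)=-q\,\nabla\!\left(\rho_\infty^{q-1}h_q(v)\right),\quad B:=\nabla V_\lambda*(\rho_\infty g).
\]
The decisive structural fact is that the weighted Hardy--Poincar\'e constant blows up, $\mathcal C_{q,N,\lambda}\ge \Omega/(1-q)\to\infty$ as $q\to1_-$, whereas every error term produced below will carry a coefficient bounded uniformly in $q$ near $1$; choosing $q$ close enough to $1$ then lets the Hardy--Poincar\'e term swallow all errors. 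This is exactly why the statement only claims a threshold $q_{N,\lambda}$ rather than the full range.

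First I would establish an upper bound $\mathbb F[v]\le \Gamma\,\Psi_1(g)$ with $\Gamma$ uniform in $q$. By Lemma~\ref{lem5.3} the diffusion part of $\mathbb F[v]$ in \eqref{free2} is at most $\tfrac q2\mathcal W_0^{q-2}\Psi_1(g)$. For the interaction part $\tfrac1{2\lambda}\ire{\ird{|x-y|^\lambda\rho_\infty g(x)\rho_\infty g(y)}}$ I would use $|x-y|^\lambda\lesssim|x|^\lambda+|y|^\lambda$, the splitting $\rho_\infty|g|=\rho_\infty^{q/2}|g|\cdot\rho_\infty^{(2-q)/2}$ under Cauchy--Schwarz, and the finiteness of $\ird{\rho_\infty^{2-q}}$ and $\ird{|x|^{2\lambda}\rho_\infty^{2-q}}$ supplied by Lemma~\ref{lem7.2}; this bounds the interaction by $C\,\Psi_1(g)$ with $C$ independent of $q$.

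Next I would produce the matching lower bound $\mathbb I[v]\ge c\,\Psi_2(g)-(\text{errors})$. Since $\mathcal M\le1$ I may insert the weight for free, $\mathbb I[v]\ge\ird{\rho_\infty v\,|A-B|^2\mathcal M}$, and then apply $|A-B|^2\ge\tfrac12|A|^2-|B|^2$. In the main term $\tfrac12\ird{\rho_\infty v|A|^2\mathcal M}$ I replace $h_q(v)$ by $h_2(v)=g$, the replacement error $\ird{\rho_\infty|\nabla(\rho_\infty^{q-1}(h_q(v)-h_2(v)))|^2\mathcal M}$ being controlled by $\epsilon(\Psi_1(g)+\Psi_2(g))$ via Lemma~\ref{lem5.5}, with $\epsilon\to0$ as $\mathcal W_0,\mathcal W_1\to1$; combined with $v\ge\mathcal W_0$ this gives a contribution $\gtrsim\mathcal W_0\,\Psi_2(g)$. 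The delicate piece is the convolution term $\ird{\rho_\infty v|B|^2\mathcal M}$: writing $B(x)=\ire{(x-y)|x-y|^{\lambda-2}\rho_\infty g(y)}$ and applying Cauchy--Schwarz with the split $\rho_\infty|g|=\rho_\infty^{(2-q)/2}\cdot\rho_\infty^{q/2}|g|$ yields $|B(x)|^2\le\big(\ire{|x-y|^{2\lambda-2}\rho_\infty^{2-q}(y)}\big)\,\Psi_1(g)$, so after integrating in $x$ and discarding $\mathcal M\le1$ and $v\le\mathcal W_1$ this term is at most $\mathcal W_1\,\Lambda\,\Psi_1(g)$, precisely the second quantity bounded in Lemma~\ref{lem7.2}.

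Finally I would assemble the three ingredients. Applying Lemma~\ref{lem7.4} to $h=\rho_\infty^{q-1}g$, which is admissible because $\ird{\rho_\infty g}=0$, gives $\Psi_2(g)\ge q^2\mathcal C_{q,N,\lambda}\Psi_1(g)$; hence each term proportional to $\Psi_1(g)$ — the convolution error $\Lambda\Psi_1(g)$, the interaction contribution to $\mathbb F$, and the $\epsilon\Psi_1(g)$ from Lemma~\ref{lem5.5} — is at most $(1-q)/(q^2\Omega)$ times $\Psi_2(g)$, hence negligible once $q$ is close to $1$. This leaves $\mathbb I[v]\ge c'\Psi_2(g)\ge c''\mathcal C_{q,N,\lambda}\Psi_1(g)$, which together with $\mathbb F[v]\le\Gamma\Psi_1(g)$ yields $\mathbb F[v]\le\gamma\,\mathbb I[v]$ with $\gamma\sim 1/\mathcal C_{q,N,\lambda}$ bounded. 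The main obstacle is keeping all constants uniform as $q\to1_-$: with no explicit $\rho_\infty$ available for $\lambda>2$, this uniformity rests entirely on the asymptotics $C(q)(1-q)\to1$, the moment bounds of Lemmas~\ref{L71}--\ref{lem7.2}, and the genuine divergence $\mathcal C_{q,N,\lambda}\sim 1/(1-q)$.
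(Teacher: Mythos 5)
Your proposal is correct and follows essentially the same route as the paper's proof: the same upper bound $\mathbb F[v]\lesssim\Psi_1(g)$ via Lemma~\ref{lem5.3} and the moment bounds behind Lemma~\ref{lem7.2}, the same lower bound $\mathbb I[v]\gtrsim\Psi_2(g)-\Lambda\Psi_1(g)$ obtained by inserting $\mathcal M\le 1$, performing the $h_q\to h_2$ replacement of Lemma~\ref{lem5.5} and Cauchy--Schwarz on the convolution term, and the same absorption mechanism using the divergence $\mathcal C_{q,N,\lambda}\ge\Omega(N,\lambda)/(1-q)$ from Lemma~\ref{lem7.4} to define the threshold $q_{N,\lambda}$. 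The only cosmetic differences are the order of the two quadratic splittings and that you bound the interaction term in $\mathbb F$ by $|x-y|^\lambda\lesssim|x|^\lambda+|y|^\lambda$ plus one-variable moments rather than by the paper's double Cauchy--Schwarz against the doubled kernel, which is the same underlying estimate (it is exactly how the paper proves Lemma~\ref{lem7.2}).
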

\begin{proof}
Recall
\[
\rho = \rho_\infty v = \rho_\infty (1+g) = \rho_\infty + j
\]
by Cauchy-Schwarz inequality, 
\[
\ire{|x-y|^\lambda j(y)} \le \(\ire{ |x-y|^{2\lambda} \rho_{\infty}^{2-q} (y)} \)^{\frac 1 2}  \(\ire{  \rho_{\infty}^{q-2}  j^2 }  \)^{\frac 1 2}
\]
still by Cauchy-Schwarz inequality and Lemma \ref{lem7.2} we have
\begin{equation*}
\begin{aligned}
\left| \ire{\ird{|x-y|^\lambda j(x)j(y)}} \right|  &\le    \(\ird{  \rho_{\infty}^{q-2}  j^2 }  \)^{\frac 1 2} \(\ird{  \rho_{\infty}^{2-q} (x) \left|\ire{|x-y|^\lambda  j(y)} \right|^2} \)^{\frac 1 2}\\
&\le \(\ire{\ird{|x-y|^{2\lambda}  \rho_{\infty}^{2-q} (x)   \rho_{\infty}^{2-q} (y) }}  \)^{\frac 1 2}\(\ire{  \rho_{\infty}^{q-2}  j^2 }  \)
\end{aligned}
\end{equation*}
so together with Lemma \ref{lem5.3}, 
\begin{equation}
\label{keyes3}
\mathbb F[v] \le \left(\frac{1}{2}\mathcal W_0^{q-2}+\Lambda\right)\ird{\rho_\infty^{q-2}j^2}.
\end{equation}
Similarly still by Cauchy-Schwarz inequality
\[
|\nabla V_{\lambda}*j| (x)=  \left|\ire{ |x-y|^{\lambda-2} (x-y) j(y)} \right| \le \(\ire{ |x-y|^{2\lambda-2} \rho_{\infty}^{2-q} (y)} \)^{\frac 1 2}  \(\ire{  \rho_{\infty}^{q-2}  j^2 }  \)^{\frac 1 2}
\]
so we have
\[
\ird{\rho_\infty\left|\nabla V_{\lambda}*j\right|^2} \le\(\ire{\ird{|x-y|^{2\lambda-2}  \rho_{\infty} (x)   \rho_{\infty}^{2-q} (y) }}  \)\(\ire{  \rho_{\infty}^{q-2}  j^2 }  \)
\]
so by Lemma \ref{lem7.2},
\begin{equation}
\label{keyes4}
\ird{\rho_\infty\left|\nabla V_{\lambda}*j\right|^2} \le \Lambda\ird{\rho_\infty^{q-2}j^2}
\end{equation}
since
\[
|a+b|^2 \le 2|a|^2 +2|b|^2
\]
choose $\epsilon\in(0,\frac{1}{8}q^2)$ small enough, so that $\epsilon$ satisfies \eqref{keyes2} of Lemma \ref{lem5.4}. Together with \eqref{keyes4}, we have
\begin{equation*}
\begin{aligned}
\mathbb{I}[v] =&\ird{\rho_\infty v \left|\frac{q}{1-q}\nabla(\rho_\infty ^{q-1}(v^{q-1} -1))- \nabla V_\lambda*j\right|^2}
\\
\ge&\ird{\rho_\infty v \left|\frac{q}{1-q}\nabla(\rho_\infty ^{q-1}(v^{q-1} -1))- \nabla V_\lambda*j\right|^2 \mathcal M}
\\
\ge&\frac 1 2\ird{\rho_\infty\left|q\nabla(\rho_\infty^{q-2}j)-\nabla V_{\lambda}*j\right|^2 \mathcal M}- \frac 12\ird{\rho_\infty \left|\nabla \left(\rho_\infty ^{q-1}\frac {(v^{q-1} -1)} {q-1}  - \nabla(\rho_\infty^{q-1}(v-1))   \right)\right|^2 \mathcal M} 
\\
\ge&\frac 1 2\ird{\rho_\infty\left|q\nabla(\rho_\infty^{q-2}j)-\nabla V_{\lambda}*j\right|^2 \mathcal M}-\epsilon \Psi_1(g) - \epsilon \Psi_2(g) 
\\
\ge& \frac  1 4 q^2\ird{\rho_\infty|\nabla(\rho_\infty^{q-2}j)|^2 \mathcal M}- \frac 1 2 \ird{\rho_\infty\left|\nabla V_{\lambda}*j\right|^2} -\epsilon \Psi_1(g) - \epsilon \Psi_2(g) 
\\
\ge& \frac  1 8 q^2\ird{\rho_\infty|\nabla(\rho_\infty^{q-2}j)|^2 \mathcal M} - \Lambda\ird{\rho_\infty^{q-2}j^2}
\end{aligned}
\end{equation*}
since 
\[
\ird { (\rho_\infty^{q-2}j) \rho_\infty^{2-q} } = \ird{j} = \ird{(\rho-\rho_\infty)} = 0
\]
and by Lemma \ref{lem7.4},
\[
\ird{\rho_\infty|\nabla(\rho_\infty^{q-2}j)|^2 \mathcal M} \ge \mathcal C_{q, N,\lambda} \ird{\rho_\infty^{q-2}j^2}
\]
where
\[
\mathcal C_{q, N,\lambda}\ge\frac{\Omega(N,\lambda)}{1-q}
\]
set 
\[
q(N,\lambda):=\mbox{max}\left\{ 1-\frac{\Omega(N,\lambda)}{16\Lambda(N,\lambda)},\frac{2N}{2N+\lambda} \right\}
\] 
recall that $\Omega, \Lambda$ are defined in \eqref{keyes1},\eqref{keyes2}. Then for any $q \in (q(N, \lambda),1)$, $\mathcal C_{q,N,\lambda}\ge 16 \Lambda$. Finally by using \eqref{keyes3}, we have
\[
\mathbb I [v]  \ge \frac 1 8 \Psi_2[g] -\Lambda\Psi_1[g] \ge \frac {\mathcal C_{q, N,\lambda}} {16} \Psi_1 [g] \ge \frac {\mathcal C_{q, N,\lambda}} {16\left(\frac{1}{2}\mathcal W_0^{q-2}+\Lambda\right)} \mathbb F [v] 
\]
the lemma is thus proved.
\end{proof}
The first part of Theorem \ref{thm1.4} can be directly deduced by using Gr\"onwall inequality. For $\lambda \in[2, 4]$, we conclude the proof of the second part by using
\begin{thm}\label{Thmsp}(\cite{lopes2017uniquenes}, Theorem 2.4) For any smooth $f$  satisfies 
\[
\ird {f(x)} = 0, \quad  \ird {x_if(x)} = 0, i=1,2,...,N
\]
then for $\lambda \in[2, 4]$, we have
\[
\ird {\ire{|x-y|^\lambda f(x) f(y)  } } \ge 0.
\]
\end{thm}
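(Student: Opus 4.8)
The plan is to establish the positivity by passing to Fourier variables, which turns the bilinear form into a single integral against an explicit radial weight whose sign is governed by the Gamma function. Writing $Q[f]:=\ire{\ird{|x-y|^\lambda f(x)f(y)}}=\langle |\cdot|^\lambda * f,\,f\rangle$, I would use that $|x|^\lambda$ is a tempered distribution whose Fourier transform is again homogeneous: with a standard normalization,
\[
\widehat{|\cdot|^\lambda}(\xi)=c_{N,\lambda}\,|\xi|^{-N-\lambda},\qquad c_{N,\lambda}=\pi^{-\lambda-\frac N2}\,\frac{\Gamma\(\tfrac{N+\lambda}{2}\)}{\Gamma\(-\tfrac{\lambda}{2}\)},
\]
obtained from the Riesz-potential identity $\widehat{|\cdot|^{-\alpha}}=\pi^{\alpha-N/2}\Gamma(\tfrac{N-\alpha}{2})/\Gamma(\tfrac\alpha2)\,|\xi|^{\alpha-N}$ by analytic continuation to $\alpha=-\lambda$. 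Since $f$ is real, Plancherel gives $\widehat{|\cdot|^\lambda}$ real and
\[
Q[f]=c_{N,\lambda}\int_{\R^N}|\xi|^{-N-\lambda}\,|\hat f(\xi)|^2\,d\xi,
\]
so the whole statement reduces to checking that $c_{N,\lambda}\ge 0$ on $(2,4)$ and that the right-hand integral converges.

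For the sign, note that $\Gamma(\tfrac{N+\lambda}{2})>0$ for every $\lambda>0$, so $\operatorname{sign} c_{N,\lambda}=\operatorname{sign}\Gamma(-\tfrac\lambda2)$. When $\lambda\in(2,4)$ we have $-\tfrac\lambda2\in(-2,-1)$, an interval on which $\Gamma$ is strictly positive (it has no zeros and blows up to $+\infty$ at both endpoints), whence $c_{N,\lambda}>0$. This is exactly the sign flip across $\lambda=2$: for $\lambda\in(0,2)$ one instead has $-\tfrac\lambda2\in(-1,0)$ where $\Gamma<0$, recovering the classical conditional \emph{negative} definiteness of $|x-y|^\lambda$ under the single constraint $\ird{f}=0$.

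The hard part is the convergence of $\int_{\R^N}|\xi|^{-N-\lambda}|\hat f(\xi)|^2\,d\xi$ at $\xi=0$, where $|\xi|^{-N-\lambda}$ is far from locally integrable; this is precisely the step forcing the two moment conditions. From $\ird{f}=0$ and $\ird{x_if}=0$ I get $\hat f(0)=0$ and $\nabla\hat f(0)=0$, hence $\hat f(\xi)=O(|\xi|^2)$ and $|\hat f(\xi)|^2=O(|\xi|^4)$. Near the origin the integrand is then $O(|\xi|^{4-N-\lambda})$, and in polar coordinates the radial exponent is $3-\lambda$, so the integral converges iff $\lambda<4$. The same vanishing to second order also legitimizes the Plancherel step: as a homogeneous distribution $|\xi|^{-N-\lambda}$ a priori needs a finite-part (Hadamard) regularization at $0$, but tested against $|\hat f|^2$, which vanishes to order four there, the regularization is inactive and the ordinary absolutely convergent integral is recovered. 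This yields $Q[f]\ge 0$ for every $\lambda\in(2,4)$. The two endpoints I would treat by direct expansion rather than Fourier analysis: at $\lambda=2$ the identity $|x-y|^2=|x|^2-2x\cdot y+|y|^2$ together with the two constraints gives $Q[f]=0$ outright, while at $\lambda=4$ expanding $|x-y|^4$ and discarding every term containing a vanishing moment collapses $Q[f]$ to
\[
Q[f]=2\(\ird{|x|^2 f}\)^{2}+4\sum_{i,j}\(\ird{x_ix_j f}\)^{2}\ge 0.
\]
Combining the open interval with these two endpoint computations gives the claim on all of $[2,4]$.
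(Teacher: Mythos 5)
Your proof cannot be compared against an internal argument, because the paper never proves this statement: it is imported wholesale from \cite{lopes2017uniquenes} (Theorem 2.4), and the text around it gives no proof sketch. Judged on its own merits, your Fourier argument is correct and is essentially the standard proof of this conditional positive-definiteness fact. The constant is the right analytic continuation of the Riesz formula, and the sign analysis is exactly the point: $\Gamma\left(-\tfrac\lambda2\right)>0$ precisely when $-\tfrac\lambda2\in(-2,-1)$, i.e. $\lambda\in(2,4)$, which also explains both the failure for $\lambda\in(0,2)$ (mere zero mass suffices there, but with the opposite sign) and, together with the divergence of your integral at $\lambda\ge4$, the remark in the paper that the statement fails for $\lambda>4$. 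The two moment conditions give $\hat f(\xi)=O(|\xi|^2)$, hence $|\hat f(\xi)|^2$ vanishes to fourth order with all derivatives up to order three equal to zero at the origin, which is exactly what makes the finite-part regularization of $|\xi|^{-N-\lambda}$ inactive and the Plancherel pairing an honest convergent integral for $\lambda<4$; your separate treatment of the endpoints $\lambda=2$ (where $Q[f]=0$) and $\lambda=4$ (where the expansion collapses to $2\left(\int_{\R^N}|x|^2f\,dx\right)^2+4\sum_{i,j}\left(\int_{\R^N}x_ix_jf\,dx\right)^2\ge0$) is also correct and necessary, since the Fourier identity degenerates at even integers. Two remarks. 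First, a caveat inherited from the statement itself: ``smooth $f$'' is not enough for $Q[f]$, $\hat f\in C^2$, or the Plancherel step to make sense; you tacitly assume Schwartz-type decay (in the paper's application $f=\rho-\rho_\infty$, which does decay appropriately), and this should be stated as a hypothesis. Second, for comparison, Lopes' published proof reaches the same conclusion by a route that avoids homogeneous distributions altogether: for $\lambda\in(2,4)$ one writes $|z|^\lambda=c_\lambda\int_0^\infty\left(e^{-t|z|^2}-1+t|z|^2\right)t^{-1-\lambda/2}\,dt$ with $c_\lambda>0$, integrates against $f(x)f(y)$, kills the $-1$ and $t|z|^2$ terms using the mass and first-moment constraints (your $\lambda=2$ computation), and invokes positive definiteness of the Gaussian. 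That subordination argument buys freedom from finite-part technicalities at the cost of a less transparent explanation of why the window is exactly $(2,4)$; your multiplier computation makes the role of $\Gamma\left(-\tfrac\lambda2\right)$ and of each moment condition completely explicit.
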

While Theorem~\ref{Thmsp} is not true for $\lambda >4$.


\newpage

\end{document}